\newcommand{\R}{\mathbb{R}}
\newcommand{\N}{\mathbb{N}}
\numberwithin{equation}{section}
\newtheorem{thm}{Theorem}[section]
\newtheorem{cor}[thm]{Corollary}
\newtheorem{lem}[thm]{Lemma}
\newtheorem{prop}[thm]{Proposition}
\newtheorem{defn}[thm]{Definition}
\newtheorem{rem}[thm]{Remark}
\renewcommand{\leq}{\leqslant}
\renewcommand{\geq}{\geqslant}
\begin{document}

\title{DENSITY ESTIMATES
FOR A (NON)LOCAL VARIATIONAL MODEL
 WITH DEGENERATE DOUBLE-WELL POTENTIAL}

\author{Serena Dipierro, Alberto Farina, Giovanni Giacomin and Enrico Valdinoci
\thanks{Serena Dipierro, Giovanni Giacomin and Enrico Valdinoci:
Department of Mathematics and Statistics,
University of Western Australia, 35 Stirling Highway,
WA6009 Crawley, Australia.\\
Alberto Farina: LAMFA, UMR CNRS 7352, 
	Universit\'e Picardie Jules Verne
	33, rue St Leu, 80039 Amiens, France.\\
{\tt serena.dipierro@uwa.edu.au, alberto.farina@u-picardie.fr
giovanni.giacomin@research.uwa.edu.au,
enrico.valdinoci@uwa.edu.au\\
SD, GG and EV are members of Australian Mathematical Society.
Supported by the Australian Future Fellowship
FT230100333 ``New perspectives on nonlocal equations''
and the Australian Laureate Fellowship FL190100081 ``Minimal
surfaces, free boundaries and partial differential equations''. 
Helpful discussions with Francesco De Pas and Jack Thompson are acknowledged. AF and GG would like to thank the Institute Henri Poincar\'e where part of this research was performed.}}}

\maketitle

\begin{abstract}
In this paper
we provide density estimates for a class of functions which includes all the minimizers of the energy 
\begin{equation*}
\mathcal{E}_s^p(u,\Omega):=(1-s)\left(\frac{1}{2}\int_{\Omega}\int_{\Omega}\frac{\left|u(x)-u(y)\right|^p}{\left|x-y\right|^{n+sp}}\,dx\,dy +\int_{\Omega}\int_{\R^n\setminus \Omega}\frac{\left|u(x)-u(y)\right|^p}{\left|x-y\right|^{n+sp}}\,dx\,dy\right)+\int_{\Omega}W(u(x))\,dx,
\end{equation*}
where~$p\in (1,+\infty)$, $s \in \left(0,1\right)$ and~$W$ is a double-well potential with polynomial growth~$m\in \left[p,+\infty\right)$ from the minima. The nonlocal estimates obtained are uniform as~$s\to1$.

Moreover, making use of a $\Gamma$-convergence result for $\mathcal{E}_s^p$ as $s\to 1$, we obtain density estimates for the minimizers  of the limit energy functional, which takes the form
\begin{equation*}
\mathcal{E}_1^p(u,\Omega):=\frac{K_{n,p}}{2p}\int_{\Omega} \left|\nabla u(x)\right|^p+\int_{\Omega} W(u(x))\,dx,
\end{equation*} 
for a suitable $K_{n,p}\in (0,+\infty)$.

\end{abstract}

\section{Introduction}
Phase separation is a cross-disciplinary field, which attracts researchers from  physics~\cite{weber2019physics,ter2013collected,MR0523642,gurtin1985theory, ginzburg1958theory,cahn1958free,allen1972ground,MR1002633}, biology~\cite{brangwynne2009germline,alberti2017phase,mitrea2016phase} and social sciences~\cite{harding2020population,castellano2009statistical}. This model is used to describe a broad variety of systems characterized by the interaction of two (or more) components, which are associated to different values of a suitable state parameter. Such a parameter is often expressed by a function $u:\Omega\subset\R^n \to [-1,1]$, where the pure phases are given by $-1$ and $1$.  

In the classical framework, a prototype of energy for phase separation is given by the  non-scaled energy functional
\begin{equation}\label{NSGL}
\mathcal{E}^p(u,\Omega):= \frac{1}{p}\int_{\Omega}\left|\nabla u(x)\right|^p\,dx+\int_{\Omega} W(u(x))\,dx, 
\end{equation}
where $p\in (1,+\infty)$. The function $W:[-1,1]\to \R^+$ is a double-well potential satisfying
\begin{equation*}
W(t)>0\quad\mbox{for all}\quad t\in (-1,1)\quad\mbox{and}\quad W(\pm 1)=0. 
\end{equation*}

In this framework, phase separation is induced by the minimization of $\mathcal{E}^p$. More precisely,
on the one hand, the action of the potential $W$ pushes the minimizers of $\mathcal{E}^p$ to attain values as close as possible to its zeros, namely the pure phases $\pm 1$. Furthermore, the presence of the gradient term of $L^p$-type discourages the formation of subregions of $\Omega$ where the state parameter jumps from one pure phase to the other. These regions, which are characterized by intermediate values of the state parameter, are known as ``interfaces''. 

A common matter of investigation in physics, biology, mathematical physics and analysis is the description of these interfaces. An interesting approach to this problem is provided by density estimates.    

The aim of density estimates is to establish lower bounds for the measure of the portion of the domain where the state parameter attains values close to the pure phases. The first result in this direction was obtained by L. Caffarelli and A. C\'ordoba. In~\cite{MR1310848} they proved that if $u:\Omega\to [-1,1]$ is a minimizer for $\mathcal{E}^2$, then for every $\theta\in (-1,1)$ and $r\in (0,+\infty)$ large enough it holds that $\left\lbrace \left|u\right|>\theta \right\rbrace\cap B_r$ is comparable to $r^n$. On the other hand, they showed that the
measure of the interface $\left\lbrace \left|u\right|\leq \theta\right\rbrace\cap B_r$ behaves like $r^{n-1}$.

Since the pioneering work of L. Caffarelli and A. C\'ordoba, density estimates have been established for minimizers and quasiminimizers of a larger class of energy functionals associated to phenomena of phase separation. In~\cite{MR2126143} and~\cite{6} density estimates have been proved for minimizers of $\mathcal{E}^p$ with $p\in (1,+\infty)$. Also, $\chi$-shaped potentials (i.e., piecewise constant potentials) are considered, which are associated to models for fluid jets. In~\cite{MR2413100} density estimates are proved for the same energies but under the weaker assumption of quasiminimality.  

Nonlocal counterparts of the energies studied in~\cite{MR1310848,MR2126143,6,MR2413100} have already been considered in the literature. A common approach to generalize phase separation in a nonlocal framework is to replace the gradient term of $L^p$-type with the Gagliardo seminorm. With such a choice, the energy used to describe nonlocal phase separation becomes 
\begin{equation}\label{fvdcteju6543}
\mathcal{E}_s^p(u,\Omega):=\frac{1}{2}\int_{\Omega}\int_{\Omega}\frac{\left|u(x)-u(y)\right|^p}{\left|x-y\right|^{n+sp}}\,dx\,dy+\int_{\Omega}\int_{\R^n\setminus \Omega}\frac{\left|u(x)-u(y)\right|^p}{\left|x-y\right|^{n+sp}}\,dx\,dy+\int_{\Omega}W(u(x))\,dx,
\end{equation} 
where $s\in (0,1)$ and $p\in (1,+\infty)$. 

Density estimates for the energy functional in~\eqref{fvdcteju6543} with $p=2$ were proved in~\cite{MR2873236} for every $s\in \left(0,\frac{1}{2}\right)$, making use of the Sobolev inequality. Moreover, in~\cite{MR3133422}, the same authors generalized the result for every $s\in (0,1)$.      

In all the aforementioned papers on density estimates, the potential $W$ is taken such that its growth from the minima is comparable to a polynomial of degree $m$, where $m\in \left(1,p\right]$ and $p\in (1,+\infty)$ is the homogeneity of the gradient term. Potentials $W$ that detach from their minima with $m\geq p$ are known as \textit{degenerate potentials}, and to the best of our knowledge these have been previously studied in the framework of density estimates only in two others  research articles, namely~\cite{1} and~\cite{DFVERPP}.     

In the local setting, see~\cite{1} and the recent~\cite{savin2025density}, density estimates for degenerate double-well potentials have been proved for quasiminimizers of $\mathcal{E}^p$. For our purposes, it is also important to recall that the result in~\cite{1} came with an additional restriction
on the growth of the potential with respect to its wells. Indeed, in Theorem~1.1 of~\cite{1}, density estimates are showed as far as $p\in (1,+\infty)$, $m\in \left[p,+\infty\right)$ and $n$ satisfy 
\begin{equation}\label{vernacu}
\frac{pm}{m-p}>n,  
\end{equation}
see~(1.7) in~\cite{1}. In the most recent paper~\cite{savin2025density}, density estimates for quasiminimizers have been showed without the additional assumption in~\eqref{vernacu}, but with stronger conditions on the quasiminimizer, see Remark~\ref{removiult}.

In the nonlocal setting, see~\cite{DFVERPP}, density estimates for degenerate double-well potentials have been proved for minimizers of $\mathcal{E}_s^p$ for every $p\in (1,+\infty)$ and $s\in \left(0,\frac{1}{p}\right)$. This result was obtained through the Sobolev inequality, with an approach similar to the one used by E. Valdionci and O. Savin in~\cite{MR2873236}. Nevertheless, the presence of a degenerate double-well potential together with a homogeneity $p\neq 2$ required the introduction of a new barrier bespoke for the specifics of the problem under consideration, see Theorem~3.1 in~\cite{DFVERPP}.    
   
In this paper, we consider the renormalized non-scaled free energy 
\begin{equation}\label{holo-polo}
\mathcal{E}_s^p(u,\Omega):=(1-s)\left(\frac{1}{2}\int_{\Omega}\int_{\Omega}\frac{\left|u(x)-u(y)\right|^p}{\left|x-y\right|^{n+sp}}\,dx\,dy+\int_{\Omega}\int_{\R^n\setminus \Omega}\frac{\left|u(x)-u(y)\right|^p}{\left|x-y\right|^{n+sp}}\,dx\,dy\right)+\int_{\Omega}W(u(x))\,dx, 
\end{equation} 
with $W$ being a degenerate double-well potential. In particular, we complete the work begun in~\cite{DFVERPP}, and establish, for every $s\in (0,1)$ and $p\in (1,+\infty)$, density estimates for a class of state parameters which includes all minimizers of $\mathcal{E}_s^p$, see Theorems~\ref{th:fracp>=2} and~\ref{CapVieABall} below.  

The presence of $(1-s)$ in front of the kinetic term is needed to guarantee that as $s\to 1$ the energy $\mathcal{E}_s^p$ $\Gamma$-converges to an energy proportional to the one in~\eqref{NSGL}, see Theorem~\ref{12c-dgbnbc5543} below. From this we will be able to obtain density estimates for minimizers of~\eqref{NSGL} starting from the nonlocal density estimates obtained in Theorem~\ref{CapVieABall}. As a byproduct, we are able to show that the condition in~\eqref{vernacu} for minimizers is not sharp, and density estimates hold for the local and degenerate case for every $n\geq 1$, $p\in (1,+\infty)$ and $m\in \left[p,+\infty\right)$, see Theorem~\ref{viredghloprec45}. 

The precise hypotheses on $W$, the statement of Theorems~\ref{th:fracp>=2},~\ref{CapVieABall},~\ref{12c-dgbnbc5543} and~\ref{viredghloprec45} and their corollaries are all discussed in the following section.

\section{Mathematical framework and main results}
In what follows we let $n\in\N$, $\Omega\subset\R^n$ open, $s\in(0,1)$ and $p\in(1,+\infty)$. We consider a potential $W:[-1,1]\to\R^+$ such that 
\begin{equation}\label{po12345}
W\mbox{  is lower semicontinuous}.
\end{equation} 
Also, we suppose that, for every $x\in [-1,1]$,
\begin{equation}\label{potential}
\lambda\chi_{(-\infty,\theta]}(x)\left|1+x\right|^{m}   \leq W(x)\leq \Lambda\left|1+x\right|^m,
\end{equation}
for some $\lambda\in (0,1]$, $\Lambda\in [1,+\infty)$, $\theta\in (-1,1)$ and~$m\in \left[p,+\infty\right)$.

Moreover, we assume that, for all $-1\leq r\leq t\leq -1+q$,
\begin{equation}\label{potntiald2}
W(t)-W(r)\geq c_1(t-r)\left|1+r\right|^{m-1}+c_1(t-r)^k,
\end{equation}
for some $q\in (0,1)$ and $c_1\in (0,+\infty)$,
where 
\begin{equation}\label{key?}
k:=\begin{dcases}
m\quad &\mbox{if}\quad m\in \N\\
\left[m\right]+1 \quad &\mbox{if}\quad m \in(1,+\infty)\setminus \N. 
\end{dcases}
\end{equation}

Given~$m\in (1,+\infty)$, the prototype of double-well potential that we have in mind has the form
\begin{equation*}
W_m(x):=\frac{(1-x^2)^m}{2m}, 
\end{equation*}
since one can show the validity of condition~\eqref{potntiald2} for this model case (see
In Proposition~\ref{colllise}).

For every measurable $u:\R^n\to \R$ we define
\begin{equation}\label{kinetic}
\mathcal{K}_s^p(u,\Omega):=\frac{1}{2}\int_{\Omega}\int_{\Omega}\frac{\left|u(x)-u(y)\right|^p}{\left|x-y\right|^{n+sp}}\,dx\,dy +\int_{\Omega}\int_{\R^n\setminus \Omega}\frac{\left|u(x)-u(y)\right|^p}{\left|x-y\right|^{n+sp}}\,dx\,dy
\end{equation} 
and rewrite the energy in~\eqref{holo-polo} as
\begin{equation}\label{energy}
\mathcal{E}_{s}^p(u,\Omega):= (1-s)\mathcal{K}_s^p(u,\Omega)+\int_{\Omega}W(u)\,dx.
\end{equation}
The normalizing constant $(1-s)$ in front of $\mathcal{K}_s^p$ is needed to obtain
stable estimates as~$s\to1$ and
recover density estimates for the local energy in~\eqref{NSGL} starting from the nonlocal ones. We will achieve such a result, see Theorem~\ref{viredghloprec45}, by proving first that $\mathcal{E}_s^p$ $\Gamma$-converges as $s\to 1$ to an energy proportional to the one in~\eqref{NSGL}, see Theorem~\ref{12c-dgbnbc5543}. This point will be discussed more in detail in Section~\ref{ere-optre}. 

A suitable function space to study~$\mathcal{K}_s^p$ is
\begin{equation}
\widehat{H}^{s,p}(\Omega):=\left\lbrace u:\R^n\to \R\mbox{  measurable s.t.  } \mathcal{K}_{s}^p(u,\Omega)<+\infty  \right\rbrace.
\end{equation}
Since the potential $W$ is defined in the interval $[-1,1]$, the suitable subspace of $\widehat{H}^{s,p}(\Omega)$ needed to study $\mathcal{E}_s^p$ is  
\begin{equation}\label{AdSet}
X^{s,p}(\Omega):=\left\lbrace u\in \widehat{H}^{s,p}(\Omega)\mbox{  s.t.  }\left\| u\right\|_{L^\infty(\R^n)}\leq 1\right\rbrace.
\end{equation}
Moreover, it will be useful to consider the subspace of $X^{s,p}(\Omega)$ of functions with a prescribed trace. Hence, given any measurable $f:\R^n\to [-1,1]$  we denote the subspace of $X^{s,p}(\Omega)$ with trace $f$, i.e. 
\begin{equation*}
X_f^{s,p}(\Omega):=\left\lbrace u\in X^{s,p}(\Omega)\mbox{  s.t.  }u=f\mbox{  a.e. in  }\R^n\setminus \Omega   \right\rbrace.
\end{equation*}
Then, we have the following definition:
\begin{defn}[$\epsilon$-minimizer]\label{epsilonmimimizers}
If $\Omega\subset\R^n$ is bounded, given $\epsilon\in [0,+\infty)$, we say that $u\in X^{s,p}(\Omega)$ is an $\epsilon$-minimizer for $\mathcal{E}_s^p$ if, for every $v\in X_u^{s,p}(\Omega)$,
\begin{equation*}
\mathcal{E}_s^p(u,\Omega)\leq \epsilon+\mathcal{E}_s^p(v,\Omega).
\end{equation*}
If $\Omega\subset\R^n$ is unbounded, we say that $u\in X^{s,p}(\Omega)$ is an $\epsilon$-minimizer for $\mathcal{E}_s^p(\cdot,\Omega)$ if for any $\Omega'\subset \subset \Omega$ it is a minimizer of $\mathcal{E}_s^p(\cdot,\Omega')$. 
\end{defn}
\begin{rem}\label{lfvc}
Note that if $u$ is an $\epsilon$-minimizer for $\mathcal{E}_s^p$ in $\Omega\subset\R^n$ open and bounded, then $u$ $\epsilon$-minimizes $\mathcal{E}_s^p$ in every measurable subdomain $\Omega'\subset\Omega$. 
\end{rem}
We observe that for $\epsilon=0$ one recovers the definition of minimizer of $\mathcal{E}_s^p$, see for e.g. Definition~1.1 in~\cite{DFVERPP}. It is also interesting to compare the notion of $\epsilon$-minimizer with the notion of quasiminimizer. This was introduced in the classical variational framework by E. Giusti and M. Giaquinta in~\cite{MR0666107,giaquinta1984quasi}. 

We provide here a definition of quasiminimizer for the nonlocal setting of the energy $\mathcal{E}_s^p$:
\begin{defn}[$Q$-minimizer for $\mathcal{E}_s^p$]
For every open set $\Omega\subset\R^n$ and $Q\in [1,+\infty)$ we say that $u\in X^{s,p}(\Omega)$ is a $Q$-minimizer for $\mathcal{E}_s^p$ in $\Omega$ if for every open set $A\subset\Omega$ and $v\in X_{u}^{s,p}(A)$ it holds that 
\begin{equation}
\mathcal{E}_s^p(u,A)\leq Q\mathcal{E}_s^p(v,A). 
\end{equation}
\end{defn}
Then, in the following remark, we show that for every $Q$ and $\epsilon$ there is a $\epsilon$-minimizer which is not a $Q$-minimizer.
\begin{rem}
In this remark we show that for every $Q\in (1,+\infty)$ and $\tilde{\epsilon}\in (0,1)$, there exists some $\tilde{\epsilon}$-minimizer of $\mathcal{E}_s^p$ in $B_1$ which is not a $Q$-minimizer. To do so, we introduce the function
\begin{equation}\label{cwbf-86yh}
u(x):=\begin{dcases}
1\quad &\mbox{for all}\quad x\in B_{e^{-2}}\\
\left|\ln(\left|x\right|)\right|-1\quad &\mbox{for all}\quad x\in B_1\setminus B_{e^{-2}}\\
-1\quad &\mbox{for all}\quad x\in \R^n\setminus B_1. 
\end{dcases}
\end{equation} 
We observe that 
\begin{equation}\label{cwbf-86yh}
\left\| \nabla u \right\|_{L^\infty(\R^n)}\leq e^2\quad\mbox{and}\quad \left\|u\right\|_{L^\infty(\R^n)}\leq 1.
\end{equation}
From this it follows that 
\begin{equation}\label{locrebgttt5}
\begin{split}
\mathcal{K}_s^p\left(u,B_{1}\right) &\leq \int_{B_1}\int_{B_2}\frac{\left|u(x)-u(y)\right|^p}{\left|x-y\right|^{n+sp}}\,dx\,dy+\int_{\R^n\setminus B_2}\int_{B_1}\frac{\left|u(x)-u(y)\right|^p}{\left|x-y\right|^{n+sp}}\,dx\,dy\\
&\leq \left\|\nabla u \right\|_{L^\infty(\R^n)}^p \int_{B_1}\int_{B_2}\frac{dx\,dy}{\left|x-y\right|^{n+sp-p}}+2^p\left\|u\right\|_{L^\infty(\R^n)}^p\int_{\R^n\setminus B_2}\int_{B_1} \frac{dx\,dy}{\left|x-y\right|^{n+sp}} \\
&\leq e^{2p} \int_{B_1}\int_{B_2}\frac{dx\,dy}{\left|x-y\right|^{n+sp-p}}+2^p\int_{\R^n\setminus B_2}\int_{B_1} \frac{dx\,dy}{\left|x-y\right|^{n+sp}} \\
&\leq C_{n,s,p},
\end{split}
\end{equation}
for some $C_{n,s,p}\in (0,+\infty)$. In particular, thanks to~\eqref{cwbf-86yh} and this last equation we deduce that $u\in X^{s,p}(B_1)$. Also, we notice that there exists some constant $c_{n,s,p}>0$ such that
\begin{equation}\label{ioprefvv}
\mathcal{K}_s^p(u,B_1)\geq c_{n,s,p}. 
\end{equation}
Now, we rescale $u$ as 
\begin{equation}
u_\epsilon(x):=u\left(\frac{x}{\epsilon}\right). 
\end{equation}
and we claim that 
\begin{equation}\label{im-btre54778}
\mbox{for every $\tilde{\epsilon}\in (0,1)$ there exists some $\epsilon>0$ such that $u_\epsilon$ is a $\tilde{\epsilon}$-minimizer in $B_1$}. 
\end{equation}
First, we observe that thanks to~\eqref{potential} we have that 
\begin{equation}\label{ppppp7777734}
\int_{B_1}W\left(u_{\epsilon}(x)\right)\,dx=\int_{B_\epsilon}W\left(u_{\epsilon}(x)\right)\,dx 
=\epsilon^n\int_{B_1} W(u(x))\,dx
\leq \epsilon^n \Lambda 2^m \mathcal{L}^n\left(B_1\right).   
\end{equation}
Moreover, by scaling and making use of~\eqref{locrebgttt5} together with the fact that $supp(u+1)\subset B_1$ we obtain that
\begin{equation}\label{invefdrt54}
\mathcal{K}_s^p \left(u_\epsilon,B_1\right)=\epsilon^{n-sp} \mathcal{K}_s^p\left(u,B_{\frac{1}{\epsilon}}\right)=\epsilon^{n-sp} \mathcal{K}_s^p(u,B_1)
\leq  \epsilon^{n-sp} C_{n,s,p}. 
\end{equation}
In particular, from equation~\eqref{ppppp7777734} and~\eqref{invefdrt54}, and choosing $\epsilon$ small enough we obtain that  
\begin{equation*}
\begin{split}
\mathcal{E}_s^p(u_\epsilon,B_1)&=(1-s)\mathcal{K}_s^p(u_\epsilon,B_1)+\int_{B_1}W\left(u_\epsilon\right)\,dx\\
&\leq \epsilon^{n-sp}C_{n,s,p}(1-s)+ \epsilon^n \Lambda 2^m\mathcal{L}^n(B_1)\\
&=\epsilon^{n-sp}\left(C_{n,s,p}(1-s)+\epsilon^{sp} \Lambda 2^m\mathcal{L}^n(B_1)\right)\\
&\leq \tilde{\epsilon}. 
\end{split}
\end{equation*}
From this, it follows immediately that for every $v\in X_u^{s,p}(B_1)$ it holds that 
\begin{equation*}
\mathcal{E}_s^p(u,B_1)\leq \tilde{\epsilon}+\mathcal{E}_s^p(v,B_1)
\end{equation*}
proving claim~\eqref{im-btre54778}. Furthermore, we observe that as a consequence of~\eqref{ioprefvv} 
\begin{equation}\label{olt-uterev}
\mathcal{E}_s^p(u_\epsilon,B_\epsilon)\geq (1-s)\mathcal{K}_s^p(u_\epsilon,B_\epsilon)=(1-s)\epsilon^{n-s} \mathcal{K}_s^p(u,B_1)\geq c_{n,s,p}(1-s)\epsilon^{n-sp}.
\end{equation} 
Now, that we proved claim~\eqref{im-btre54778}, we observe that there exists no $Q\in [1,+\infty)$ such that $u_\epsilon$ is a $Q$-minimizer in $B_1$. As a matter of fact, we observe that if $v\equiv 1\in X_{u_\epsilon}^{s,p}(B_\epsilon)$, using~\eqref{olt-uterev} we obtain 
\begin{equation*}
\mathcal{E}_s^p(v,B_\epsilon)=0< (1-s)c_{s,n,p}\epsilon^{n-sp}\leq \mathcal{E}_s^p(u_\epsilon,B_\epsilon). 
\end{equation*}
\end{rem}
We also observe that every $Q$-minimizer $u$ in $\Omega$ is an $\epsilon$-minimizer, for an $\epsilon$ which depends on $\mathcal{E}_s^p(u,\Omega)$. More precisely: 
\begin{rem}\label{EnQBo-omnre}
Suppose that $Q\in (1,+\infty)$ and that $u\in X^{s,p}(\Omega)$ is a $Q$-minimizer for~$\mathcal{E}_s^p$ in $\Omega\subset\R^n$. Furthermore, we let $E_0\in (0,+\infty)$ such that 
\begin{equation*}
\mathcal{E}_s^p(u,\Omega)\leq E_0. 
\end{equation*}
Then, we have that 
\begin{equation*}
\mbox{$u$ is an $\epsilon$-minimizer for $\mathcal{E}_s^p$ for  $\epsilon:=(Q-1)E_0$.} 
\end{equation*}
To show this, we need to prove that for every $v\in X_u^{s,p}(\Omega)$ it holds that 
\begin{equation}\label{cfamlbctey}
\mathcal{E}_s^p\left(u,\Omega\right)\leq \epsilon+\mathcal{E}_s^p\left(v,\Omega\right). 
\end{equation}
Indeed, we observe that if $v\in X_u^{s,p}(\Omega)$, then we  have that either 
\begin{equation}\label{gdue}
\mathcal{E}_s^p(u,\Omega)<\mathcal{E}_s^p(v,\Omega)
\end{equation}
or
\begin{equation}\label{guno}
\mathcal{E}_s^p(u,\Omega)\geq \mathcal{E}_s^p(v,\Omega).
\end{equation}
If~\eqref{gdue} holds true than~\eqref{cfamlbctey} follows immediately. Now, we assume that~\eqref{guno} holds true. Then, in this case
\begin{equation*}
\begin{split}
\mathcal{E}_s^p(u,\Omega) &\leq Q\mathcal{E}_s^p(v,\Omega)\\
&\leq \mathcal{E}_s^p(v,\Omega)+(Q-1)\mathcal{E}_s^p(v,\Omega)\\
&=\mathcal{E}_s^p(v,\Omega)+(Q-1)\mathcal{E}_s^p(u,\Omega)\\
&\leq \mathcal{E}_s^p(v,\Omega)+(Q-1)E_0\\
&=\mathcal{E}_s^p(v,\Omega)+\epsilon.
\end{split}
\end{equation*}
\end{rem}
For every $u\in C_{\textit{loc}}^\alpha(\Omega)$, we define the quantity 
\begin{equation}\label{gliorteitbergf}
\left[u\right]_{B,\alpha}:=\sup\left\lbrace \left[u\right]_{C^\alpha (B_1(x_0))}\mbox{  s.t.  } x_0\in B_R\mbox{ and }B_{R+2}\subset \Omega \right\rbrace
\end{equation}
and we consider the subspace of $C_{\textit{loc}}^\alpha(\Omega)$ given by 
\begin{equation*}
C_B^\alpha(\Omega):=\left\lbrace u\in C_{\textit{loc}}^\alpha(\Omega)\mbox{  s.t.  } \left[u\right]_{B,\alpha}<+\infty \right\rbrace.
\end{equation*}
Note that for every $\Omega\neq \R^n$  we have that $C_{\textit{loc}}^\alpha(\Omega)=C_B^\alpha(\Omega)$. 

Now we introduce the mathematical setting necessary to study the local contribution to the energy functional in~\eqref{NSGL}. To do so, we let
\begin{equation*}
K_{n,p}:=\int_{\partial B_1}\left|\omega\cdot e_1\right|^p\,dH_{\omega}^{n-1}
\end{equation*}
and, for every $u\in W^{1,p}(\Omega)$,
\begin{equation*}
\mathcal{K}_1^p(u,\Omega):=\frac{K_{n,p}}{2p}\int_{\Omega} \left|\nabla u (x)\right|^p\,dx.
\end{equation*}
Furthermore, we define
\begin{equation*} 
X^{1,p}(\Omega):=\left\lbrace u\in W^{1,p}(\Omega)\mbox{  s.t.  }\left\| u\right\|_{L^\infty(\Omega)}\leq 1 \right\rbrace
\end{equation*}
and, for every $u \in X^{1,p}(\Omega)$, we consider the energy functional
\begin{equation}\label{localene}
\mathcal{E}_1^p(u,\Omega):=\mathcal{K}_1^p(u,\Omega)+\int_{\Omega}W(u(x))\,dx.
\end{equation}
Also, as customary we denote by $W_0^{1,p}(\Omega)$ the closure of $C_c^\infty(\Omega)$ with respect to the  $\left\|\cdot\right\|_{W^{1,p}(\Omega)}$-norm and we recall the definition of minimizer for $\mathcal{E}_1^p$: 
\begin{defn}[Minimizer for $\mathcal{E}_1^p$]
If $\Omega\subset\R^n$ is open and bounded, we say that $u\in X^{1,p}(\Omega)$ is a minimizer for $\mathcal{E}_1^p$ in $X^{1,p}(\Omega)$ if for every $w\in X^{1,p}(\Omega)$ such that $u-w\in W_0^{1,p}(\Omega)$ 
\begin{equation*}
\mathcal{E}_1^p(u,\Omega)\leq \mathcal{E}_1^p(w,\Omega). 
\end{equation*} 
\end{defn}

\subsection{Nonlocal density estimates}
We are now ready to state the main result of this paper: 

\begin{thm}[Density estimates for $\epsilon$-minimizers]\label{th:fracp>=2}
Let $\theta_1,\theta_2\in (-1,\theta]$ and 
\begin{equation}\label{liibfdcvre5555}
\theta_*:=\min \left\lbrace \theta_1,\theta_2,-1+q \right\rbrace\quad\mbox{and}\quad \theta^*:=\max\left\lbrace  \theta_1,\theta_2,-1+q\right\rbrace.
\end{equation}
Also, let $u\in X^{s,p}(\Omega)$ be such that
\begin{equation}\label{Fcon-812345}
\mathcal{L}^{n}(B_{r_0}\cap \left\lbrace u>\theta_1 \right\rbrace)>c_0,
\end{equation}
for some $c_0,r_0\in (0,+\infty)$ satisfying $B_{r_0}\subset \Omega$.

Then, there exists~$\delta:=\delta_{s,n,p,m,c_1,\theta_*,c_0}\in(0,+\infty)$ such that, if $\epsilon\in [0,\delta)$ and $u$ is an $\epsilon$-minimizer for $\mathcal{E}_s^p$, the following holds true:

There exist $R^*:=R_{s,n,p,m,c_1,\theta_*,r_0}^*\in [r_0,+\infty)$, $\tilde{c}:=\tilde{c}_{s,n,p,m,\Lambda,c_1,\theta_*,r_0,c_0}\in (0,1)$ and $c:=c_{m,\theta_*}\in (0,+\infty)$ such that, for any $r\in \left[R^*,+\infty\right)$ satisfying $B_{3r}\subset \Omega$,
\begin{equation}\label{benz}
c\int_{B_r\cap \left\lbrace \theta_{*}<u\leq \theta^* \right\rbrace}\left|1+u(x)\right|^m\,dx+\mathcal{L}^n(B_r\cap \left\lbrace u>\theta_2 \right\rbrace)>\tilde{c}r^n.
\end{equation}
\end{thm}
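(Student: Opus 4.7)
The plan is to adapt the iterative Caffarelli--C\'ordoba density argument \cite{MR1310848} to the nonlocal, degenerate and $\epsilon$-approximate setting, building on the technique developed in \cite{DFVERPP}. First, I would introduce the monotone density function
\[
V(r):=c\int_{B_r\cap \{\theta_*<u\leq \theta^*\}}|1+u(x)|^m\,dx+\mathcal{L}^n(B_r\cap \{u>\theta_2\}),
\]
and reduce the theorem to establishing a discrete isoperimetric-type growth inequality
\[
V(r+\rho_0)\geq V(r)+\gamma\, V(r)^{(n-1)/n}
\]
whenever $r$ is large and $V(r)$ is still below the target $\tilde c\, r^n$. Once this growth is in hand, the polynomial lower bound follows from the standard comparison of $V$ with the solution of the associated ODE, provided the hypothesis \eqref{Fcon-812345} can first be upgraded to a nontrivial universal starting value of $V$ at some fixed radius.

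Second, the growth inequality would be produced by the following competitor comparison. Fixing a ball $B_r$ on which the density has not yet reached the target, I plug into the $\epsilon$-minimality inequality the competitor $v:=\max(u,\phi)$, where $\phi$ is the radial barrier constructed in \cite{DFVERPP}, tailored to degenerate potentials: $\phi$ equals $\theta^*$ on a sub-ball, interpolates down to $-1$ through a transition annulus, and is extended by $-1$ outside. The change in the potential term is estimated from below via the degenerate inequality \eqref{potntiald2}, which produces an integral of the form $c_1\int (1+u)^{m-1}(v-u)+c_1\int (v-u)^k$ in the intermediate regime $\{\theta_*<u\leq \theta^*\}$. The change in the nonlocal kinetic energy is controlled from above using the explicit profile of $\phi$ together with pointwise and convexity estimates for the Gagliardo seminorm raised to the power $p$. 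Combining the two bounds and absorbing the slack $\epsilon<\delta$ for $\delta$ sufficiently small delivers the desired lower bound on the growth of $V$ from $B_r$ to $B_{r+\rho_0}$.

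Third, the iteration would be carried out in two stages. In an initial \emph{de-degeneration} stage, finitely many applications of the competitor comparison, starting from the weak hypothesis \eqref{Fcon-812345}, upgrade $V(R^*)$ to a universal positive constant at some radius $R^*\geq r_0$. In the subsequent \emph{polynomial-growth} stage, the discrete isoperimetric inequality is iterated with step $\rho_0$ to obtain $V(r)\geq \tilde c\, r^n$ for every $r\geq R^*$ with $B_{3r}\subset\Omega$. Throughout both stages the parameter $\delta$ controlling the $\epsilon$-slack is chosen so that the $\epsilon$-error at each step is dominated by the unconditional density gain produced by the barrier.

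The hard part will be making the kinetic-energy estimate for $v=\max(u,\phi)$ uniform in $s\in(0,1)$. On one hand, the renormalization $(1-s)$ must be handled so that the constants degenerate neither as $s\to 0^+$, where the long-range tail of $\phi$ is delicate and forces a careful cutoff, nor as $s\to 1^-$, where the near-interface energy of $\phi$ blows up like $(1-s)^{-1}$. On the other hand, the homogeneity $p\neq 2$ rules out the Hilbert-space cancellations employed in the $p=2$ analyses of \cite{MR2873236,MR3133422}, so one must rely on the $L^p$-type comparison inequalities developed in \cite{DFVERPP} for $s<1/p$ and extend them to the range $s\in[1/p,1)$ that was not treated there; this is where the bulk of the technical work is expected to sit.
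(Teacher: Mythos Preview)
Your high-level plan (barrier competitor, use \eqref{potntiald2}, iterate an isoperimetric-type growth, absorb $\epsilon$ by choosing $\delta$ small) is in the right spirit, but both the competitor you describe and the mechanism you intend to use for the kinetic term are off, and the decisive idea that makes the argument go through for all $s\in(0,1)$ is missing.

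\textbf{The competitor has the wrong direction.} In the paper the barrier $w$ from \cite{DFVERPP} is close to $-1$ in the interior of $B_R$ and equals $1$ near $\partial B_R$, and the competitor is $v:=\min\{u,w\}$, not a push-up $\max(u,\phi)$ with $\phi=\theta^*$ on a sub-ball. The point is that replacing $u$ by something near $-1$ \emph{lowers} the potential (since $W(-1)=0$); then $\epsilon$-minimality forces the kinetic energy of $u-v$ to be small, which is what gets turned into a lower bound on $V$. With your competitor the potential term moves the wrong way on the bulk of the set where $u$ is close to $-1$, and the inequality you need does not follow from $\epsilon$-minimality.

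\textbf{The missing idea is rearrangement, not a direct extension of the $L^p$ comparison.} You correctly identify the hard part as replacing the Sobolev-inequality step of \cite{DFVERPP} (valid only for $s<1/p$) by something that works for all $s\in(0,1)$. The paper does this not by sharpening the $L^p$ convexity estimates, but by applying the Almgren--Lieb theorem on symmetric decreasing rearrangement to $u-v$: this converts the kinetic term $\mathcal{K}_{\bar s}^{\bar p}(u-v,B_R)$ into $L(a_R^*,d_R^*)$ for concentric balls $a_R^*=B_{r_2}$ and $d_R^*=\R^n\setminus B_{r_1+r_2}$, for which an explicit lower bound (Theorem~\ref{SymverofTh} and Proposition~\ref{keleetele}) replaces Sobolev. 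The potential term produces $|b_R^*|$ via \eqref{potntiald2}. The resulting inequality is Lemma~\ref{gliorigamieriopachi}, which is the engine of the proof; your outline has no analogue of this step.

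\textbf{The iteration structure is also different.} The paper does not obtain an additive discrete inequality $V(r+\rho_0)\geq V(r)+\gamma V(r)^{(n-1)/n}$. It works with $V(t):=\mathcal L^n(B_t\cap\{u>\theta_*\})$ (not the full quantity in \eqref{benz}, which appears only at the very end), derives an integral inequality involving $\int_0^R (R-t+1)^{-sp}V'(t)\,dt$, integrates in $R$ over $[\rho,\tfrac32\rho]$ to kill the weight, and arrives at a multiplicative recursion of the form $r^\sigma V(r)^{(n-\sigma)/n}\leq C\,V(3r)$ (with $\sigma=sp$, $1$, or $1$ according to whether $sp<1$, $=1$, $>1$). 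This is then fed into Lemma~C.1 of \cite{DFVERPP}. The additive scheme you propose is the classical local mechanism; in the nonlocal setting the long-range tail naturally produces the weighted integral and the multiplicative scaling, and the paper's route exploits this rather than fighting it.
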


Here above and in the rest of the paper, $\mathcal{L}^n(E)$ denotes the Lebesgue measure of a set $E \subset \R^n$. We observe that if $p\in (1,+\infty)$, $s\in \left(0,\frac{1}{p}\right)$ and $\epsilon=0$ then Theorem~\ref{th:fracp>=2} coincides with Theorem~1.3 in~\cite{DFVERPP}. However, if $p\neq 2$ and $m\geq p$ or $p=2$ and $m>2$, even the cases $\epsilon=0$ and $s\in\left[\frac{1}{p},1\right)$ are new in the literature. Moreover, if $\epsilon>0$, then Theorem~\ref{th:fracp>=2} is new for every $s\in (0,1)$, $p\in (1,+\infty)$ and $m\in \left[p,+\infty\right)$.

\medskip

We also observe that, making use of the same proof used to show Theorem~\ref{th:fracp>=2}, we can also obtain Theorem~\ref{ibcverty6} below. The difference is that in Theorem~\ref{th:fracp>=2}, given a function $u$ satisfying the condition in~\eqref{Fcon-812345} for some $c_0,r_0\in (0,+\infty)$, we determine a range of $\epsilon$ such that if $u$ is an $\epsilon$-minimizer, then $u$ satisfies density estimates. 

On the other hand, in Theorem~\ref{ibcverty6}, we show that for every $\epsilon$ there exists some $c_0,r_0$ such that if $u$ satisfies~\eqref{Fcon-812345} for such $c_0,r_0$ and it is and $\epsilon$-minimizer, then density estimates hold.

\begin{thm}\label{ibcverty6}
Let $\theta_1,\theta_2\in (-1,\theta]$ and $\theta_*$, $\theta^*$ as in~\eqref{liibfdcvre5555}. Moreover, let $\epsilon\in (0,+\infty)$ and $u\in X^{s,p}(\Omega)$ be an $\epsilon$-minimizer for~$\mathcal{E}_{s}^p$. Then, there exists some $c_0,r_0\in (0,+\infty)$, depending on $\epsilon$, such that if $B_{r_0}\subset \Omega$
and
\begin{equation*}
\mathcal{L}^{n}(B_{r_0}\cap \left\lbrace u>\theta_1 \right\rbrace)>c_0,
\end{equation*}
then, there exist $R^*:=R_{s,n,p,m,c_1,\theta_*,r_0}^*\in [r_0,+\infty)$, $\tilde{c}:=\tilde{c}_{s,n,p,m,\Lambda,c_1,\theta_*,r_0,c_0}\in (0,1)$ and $c:=c_{m,\theta_*}\in (0,+\infty)$ such that, for any $r\in \left[R^*,+\infty\right)$ satisfying $B_{3r}\subset \Omega$,
\begin{equation*}
c\int_{B_r\cap \left\lbrace \theta_{*}<u\leq \theta^* \right\rbrace}\left|1+u(x)\right|^m\,dx+\mathcal{L}^n(B_r\cap \left\lbrace u>\theta_2 \right\rbrace)>\tilde{c}r^n.
\end{equation*}
\end{thm}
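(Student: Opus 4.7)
The plan is to revisit the proof of Theorem~\ref{th:fracp>=2} and invert the quantifier structure: in Theorem~\ref{th:fracp>=2} one fixes the seed pair $(c_0, r_0)$ and extracts a quantitative threshold $\delta(c_0, \ldots)$ on the admissible $\epsilon$, whereas here one fixes $\epsilon$ first and chooses $(c_0, r_0)$ so that the analogous threshold exceeds $\epsilon$. No new analytical machinery is required; the content is entirely a matter of tracking the dependence of $\delta$ on the seed data, which is precisely why the remark preceding the statement asserts that ``the same proof'' applies.

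Concretely, I would isolate the step in the proof of Theorem~\ref{th:fracp>=2} in which the $\epsilon$-minimality inequality is pitted against a competitor gain. In the Caffarelli--C\'ordoba scheme adapted to the nonlocal $\epsilon$-minimal setting, one constructs from $u$ a competitor $v \in X_u^{s,p}(B_{r_0})$ that pushes $u$ down toward the pure phase $-1$ on the set where $u > \theta_1$, and the resulting drop $\mathcal{E}_s^p(u, B_{r_0}) - \mathcal{E}_s^p(v, B_{r_0})$ is bounded below by an explicit quantity $\eta(c_0)$ that is positive as soon as $c_0 > 0$. The $\epsilon$-minimality bound $\mathcal{E}_s^p(u, B_{r_0}) \le \mathcal{E}_s^p(v, B_{r_0}) + \epsilon$ is informative precisely when $\epsilon < \eta(c_0)$, and this is the origin of $\delta_{s,n,p,m,c_1,\theta_*,c_0}$ in Theorem~\ref{th:fracp>=2}; once the initial step is secured, the subsequent iterations only improve, since the measure of $\{u > \theta_1\}$ then grows.

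Given $\epsilon > 0$, I would therefore choose $c_0 = c_0(\epsilon)$ so that $\eta(c_0) > \epsilon$, which is possible because $\eta$ is monotone increasing and unbounded as $c_0 \to +\infty$ (a larger seed measure produces a strictly larger competitor drop, e.g.\ through the gain in the potential term $\int_{B_{r_0}} W(u)\,dx$, which is controlled below in terms of $c_0$ via~\eqref{potential}). One then picks $r_0 = r_0(\epsilon)$ compatibly, namely large enough that $B_{r_0} \subset \Omega$ can accommodate a set of measure $c_0(\epsilon)$ where $u > \theta_1$; with these choices, the hypotheses of Theorem~\ref{th:fracp>=2} are in force and its conclusion delivers the density estimate~\eqref{benz} verbatim for every $r \ge R^*$ with $B_{3r} \subset \Omega$. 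The main obstacle, and the only point where one has to look carefully at the earlier proof rather than quote it as a black box, is the monotone and unbounded dependence of $\eta$ on $c_0$: this requires a transparent reading of the competitor estimate in Theorem~\ref{th:fracp>=2} to guarantee that the inversion of the quantifier is genuinely well-defined and that the parameters $R^*$ and $\tilde c$ in the conclusion can still be tracked in terms of the new choice $(c_0(\epsilon), r_0(\epsilon))$.
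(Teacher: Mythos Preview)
Your high-level plan --- read off the dependence of $\delta$ on $c_0$ from the proof of Theorem~\ref{th:fracp>=2}, verify that $\delta(c_0)\to+\infty$, and then invert the quantifiers --- is exactly the paper's intended argument, and it works. The paper gives no separate proof of Theorem~\ref{ibcverty6}; it simply remarks that ``the same proof'' applies, which is precisely this inversion.

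What is inaccurate is your picture of \emph{where} $\delta$ comes from. In the proof of Theorem~\ref{th:fracp>=2} there is no ``energy drop $\eta(c_0)$'' bounded below at scale $r_0$: the competitor $v=\min\{u,w\}$ is built on $B_R$ for every $R>K_0$, and the $\epsilon$-minimality does not produce a contradiction but an additive error that propagates through Lemma~\ref{gliorigamieriopachi} into the recursive volume inequality~\eqref{iopre}. The threshold $\delta$ is defined so that this error term can be absorbed (see~\eqref{Lmadbur}), and after the reduction to $c_0>1$ in Remark~\ref{plothgnebvdf} it is given explicitly by
\[
\delta=\frac{1}{C_{s,p}^{(4)}}\min\Bigl\{c_0^{\frac{n-sp}{n}},\ \log(c_0)\,c_0^{\frac{n-1}{n}},\ c_0^{\frac{n-1}{n}}\Bigr\}.
\]
The unboundedness in $c_0$ is then immediate once you check that $C_{s,p}^{(4)}=(c_4\hat c\,\hat c_p)^{-1}$ stays bounded as $c_0\to\infty$; this reduces to showing that $\hat c=\hat c_{c_0,n,p}$ from Proposition~\ref{keleetele} is bounded away from zero for large $c_0$, which one reads off from its proof. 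Your heuristic about a ``gain in the potential term controlled below via~\eqref{potential}'' does not work as stated, since the lower bound in~\eqref{potential} vanishes on $\{u>\theta\}$ and gives no information on $\{u>\theta_1\}$.

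In short: the strategy is right, but when you carry it out you should point to the explicit formula for $\delta$ rather than to a competitor-drop mechanism that is not actually present.
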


Now, we recall that in Remark~\ref{EnQBo-omnre} we showed that every $Q$-minimizer is an $\epsilon$-minimizer, for an $\epsilon$ which depends on $Q$ and on the energy of the quasiminimizer. Making use of such a result together with Theorem~\ref{th:fracp>=2}, we obtain density estimates for quasiminimizers of $\mathcal{E}_s^p$ with uniform energy bound. More precisely, we have the following result: 

\begin{thm}
Let $\theta_1,\theta_2\in (-1,\theta]$  and $\theta_*$, $\theta^*$ as in~\eqref{liibfdcvre5555}. Furthermore, we assume that $u\in X^{s,p}(\Omega)$ satisfies~\eqref{Fcon-812345} for some $c_0,r_0\in (0,+\infty)$ satisfying~$B_{r_0}\subset\Omega$. 
Then, for every $E_0\in (0,+\infty)$ there exists some $Q_0\in (1,+\infty)$ such that if $Q\in \left[1,Q_0
\right)$,  
\begin{equation*}
\mathcal{E}_s^p(u,\Omega)\leq E_0
\end{equation*}
and $u$ is a $Q$-minimizer of~$\mathcal{E}_s^p$ in $\Omega$, then there exist $R^*:=R_{s,n,p,m,c_1,\theta_*,r_0}^*\in [r_0,+\infty)$, $\tilde{c}:=\tilde{c}_{s,n,p,m,\Lambda,c_1,\theta_*,r_0,c_0}\in (0,1)$ and $c:=c_{m,\theta_*}\in (0,+\infty)$ such that, for any $r\in \left[R^*,+\infty\right)$ satisfying $B_{3r}\subset \Omega$,
\begin{equation*}
c\int_{B_r\cap \left\lbrace \theta_{*}<u\leq \theta^* \right\rbrace}\left|1+u(x)\right|^m\,dx+\mathcal{L}^n(B_r\cap \left\lbrace u>\theta_2 \right\rbrace)>\tilde{c}r^n.
\end{equation*}
\end{thm}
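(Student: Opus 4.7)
The plan is to combine Remark~\ref{EnQBo-omnre} with Theorem~\ref{th:fracp>=2} in a very direct way: fix all the background parameters and invoke the $\epsilon$-minimizer theorem with $\epsilon$ chosen so small that the $Q$-minimality of $u$, translated through Remark~\ref{EnQBo-omnre}, forces $u$ into the admissible $\epsilon$-minimizer range.

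More precisely, I would first apply Theorem~\ref{th:fracp>=2} to the data $s,n,p,m,c_1,\theta_*,c_0$ appearing in the statement. This produces a threshold $\delta:=\delta_{s,n,p,m,c_1,\theta_*,c_0}\in (0,+\infty)$ with the property that any $\epsilon$-minimizer with $\epsilon\in[0,\delta)$ and satisfying the density hypothesis~\eqref{Fcon-812345} obeys the desired estimate~\eqref{benz} (with the constants $R^*$, $\tilde{c}$, $c$ given by that theorem). Given the prescribed energy bound $E_0$, I would then define
\begin{equation*}
Q_0:=1+\frac{\delta}{2E_0}\in(1,+\infty),
\end{equation*}
so that $(Q_0-1)E_0=\delta/2<\delta$.

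Next, I would take any $Q\in[1,Q_0)$ and any $Q$-minimizer $u\in X^{s,p}(\Omega)$ with $\mathcal{E}_s^p(u,\Omega)\leq E_0$. By Remark~\ref{EnQBo-omnre}, $u$ is automatically an $\epsilon$-minimizer for $\mathcal{E}_s^p$ in $\Omega$ with $\epsilon:=(Q-1)E_0\leq(Q_0-1)E_0=\delta/2<\delta$. Hence $u$ satisfies all the hypotheses of Theorem~\ref{th:fracp>=2} (the density hypothesis~\eqref{Fcon-812345} is assumed directly), and the conclusion~\eqref{benz} follows with the very same constants $R^*$, $\tilde{c}$, $c$ produced by Theorem~\ref{th:fracp>=2}.

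The only subtle point is bookkeeping: one must check that the case $Q=1$ is also covered (in which case $\epsilon=0$ and $u$ is a genuine minimizer, handled by Theorem~\ref{th:fracp>=2} as well), and that Remark~\ref{EnQBo-omnre} applies on $\Omega$ rather than on subdomains, which is exactly its statement. There is no real analytic obstacle; the proof is essentially a rewriting of the two earlier ingredients, and the main (minor) care is simply that $Q_0$ is allowed to depend on $E_0$ and on all the parameters already appearing in $\delta$, which is consistent with the statement of the theorem.
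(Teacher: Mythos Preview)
Your proposal is correct and follows exactly the route the paper indicates: the paper states this theorem as an immediate consequence of Remark~\ref{EnQBo-omnre} combined with Theorem~\ref{th:fracp>=2}, without spelling out the details, and your argument supplies precisely those details (choose $Q_0$ so that $(Q_0-1)E_0<\delta$, then feed the resulting $\epsilon$-minimality into Theorem~\ref{th:fracp>=2}). Your handling of the edge case $Q=1$ is also appropriate.
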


\begin{rem}
We remark that in the framework of density estimates for minimizers and quasiminimizers, the condition in~\eqref{Fcon-812345} is often replaced by a pointwise condition, see for instance~(1.6) in~\cite{MR3133422}. More precisely, for a given point $x_0\in \Omega$, which for convenience is chosen to be $x_0=0$, one can assume that   
\begin{equation}\label{pointcondi}
u(0)>\theta_1. 
\end{equation}
Then, from~\eqref{pointcondi}, as a consequence of the continuity for minimizers and quasiminimizers of~$\mathcal{E}_s^p$, see e.g.~\cite{MR0666107,giaquinta1984quasi},  one obtains the existence of some $r_0,c_0\in (0,+\infty)$ such that~\eqref{Fcon-812345} is satisfied. 

\medskip
We observe that in the framework of $\epsilon$-minimizers it is not possible to replace~\eqref{Fcon-812345} with~\eqref{pointcondi}. To show this, we assume that $sp\in (0,1)$, and given $\eta\in (0,1)$ we consider the discontinuous function 
\begin{equation*}
u_\eta(x):=\begin{dcases}
-1\quad &\mbox{for all}\quad x\in \R^n\setminus  B_{\eta}\\
1\quad &\mbox{for all}\quad x\in B_{\eta}.
\end{dcases}
\end{equation*} 
Then, we claim that for every $R>\mu$ and
\begin{equation}\label{klmaim}
\mbox{for every $\epsilon\in (0,1)$ there exists some $\eta\in (0,1)$ such that $u_\eta$ is an $\epsilon$-minimizer for $\mathcal{E}_{s,p}$ in $B_R$.}
\end{equation}
Indeed, we can compute that 
\begin{equation}\label{pl1234}
\int_{B_R}W\left(u_\eta(x)\right)\,dx=0.
\end{equation}
Also, we have that 
\begin{equation}\label{pm5678}
\begin{split}
\mathcal{K}_s^p\left(u_\eta,B_R\right)&=\frac{1}{2}\int_{B_R}\int_{B_R}\frac{\left|u_\eta(x)-u_\eta(y)\right|^p}{\left|x-y\right|^{n+sp}}\,dx\,dy+\int_{B_R}\int_{\R^n\setminus B_R}\frac{\left|u_\eta(x)-u_\eta(y)\right|^p}{\left|x-y\right|^{n+sp}}\,dx\,dy\\
&=2^{p-1}\int_{B_R\setminus B_\eta}\int_{B_\eta}\frac{dx\,dy}{\left|x-y\right|^{n+sp}}+2^p\int_{\R^n\setminus B_R}\int_{B_\eta}\frac{dx\,dy}{\left|x-y\right|^{n+sp}}\\
&\leq 2^p\int_{\R^n\setminus B_\eta}\int_{B_\eta}\frac{dx\,dy}{\left|x-y\right|^{n+sp}}\\
&=c_{s,n,p} \eta^{1-sp},
\end{split}
\end{equation}
for some $c_{s,n,p}>0$. From~\eqref{pl1234} and~\eqref{pm5678} it follows that for every $\epsilon\in (0,1)$ there exists some $\eta\in (0,1)$ such that for every $v\in X_{u_\eta}^{s,p}(\Omega)$
\begin{equation*}
\mathcal{E}_s^p\left(u_\eta,B_R\right)\leq (1-s)c_{n,p}\eta^{1-sp} = \epsilon\leq \epsilon+ \mathcal{E}_s^p(v,\Omega), 
\end{equation*} 
proving claim~\eqref{klmaim}. 

Now, on one hand, we notice that~$u_\eta$ satisfies~\eqref{pointcondi}. On the other hand, we observe that for every $\eta\in (0,1)$ and $R>0$ there is no $R^*$ and $\tilde{c}$ such that~\eqref{benz} holds true in $B_R$. This is a consequence of the fact that even though $u_\eta$ satisfies~\eqref{Fcon-812345} for some $c_0,r_0\in (0,+\infty)$, the dependence of $\delta$ on $c_0$ is such that $\epsilon>\delta$.   

In other words, the parameter $c_0$ for which~\eqref{Fcon-812345} holds true determines the magnitude of the $\epsilon$ for which $u$ must be an $\epsilon$-minimizer so that it satisfies density estimates. 
\end{rem}

In the following result, we establish density estimates for $\epsilon$-minimizers that are stable with respect to $s\to 1$. To be more precise, for $s_0\in \left(\frac{1}{p},1\right)$ and $s\in \left[s_0,1\right)$, we show that density estimates hold with structural constants which can be chosen independently from $s$, but depending on $s_0$.  The proof of this result relies on the additional regularity assumption that the $\epsilon$-minimizer is H\"older continuous. The precise statement of the result is the following:

\begin{thm}[Stable density estimates for $\epsilon$-minimizers]\label{CapVieABall}
Let $s_0\in \left(\frac{1}{p},1\right)$, $s\in \left[s_0,1\right)$, $\theta_1,\theta_2\in (-1,\theta]$ and $\theta_*,\theta^*$ be defined as in~\eqref{liibfdcvre5555}. Also, let $\alpha\in (0,1)$ and $u\in C_B^\alpha(\Omega)\cap X^{s,p}(\Omega)$ such that there exist $c_0,r_0\in (0,+\infty)$ satisfying $B_{r_0}\subset\Omega$ and~\eqref{Fcon-812345}.

Then, there exists~$\delta:=\delta_{s_0,n,p,m,c_1,\theta_*,c_0,\alpha,\left[u\right]_{B,\alpha}}\in (0,+\infty)$ such that, if $\epsilon\in \left[0,\delta\right)$ and $u$ is an $\epsilon$-minimizer for $\mathcal{E}_s^p$ the following holds true:

There exist~$R^{*}:= R_{s_0,n,p,m,c_1,\theta_*,r_0}^{*}\in \left[r_0,+\infty\right)$, $\tilde{c}:=\tilde{c}_{s_0,n,p,m,\Lambda, c_1,\theta_*,r_0,c_0,\alpha,\left[u\right]_{B,\alpha}}\in (0,1)$ and $c:=c_{m,\theta_*}\in (0,+\infty)$ such that, for any $r\in \left[R^*,+\infty\right)$ satisfying $B_{4r}\subset \Omega$,
\begin{equation}\label{benz.1}
c\int_{B_r\cap \left\lbrace \theta_{*}<u\leq \theta^* \right\rbrace}\left|1+u(x)\right|^m\,dx+\mathcal{L}^n(B_r\cap \left\lbrace u>\theta_2 \right\rbrace)>\tilde{c}r^n.
\end{equation}
\end{thm}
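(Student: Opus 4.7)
The plan is to revisit the proof of Theorem~\ref{th:fracp>=2} and to track every structural constant, exploiting the $(1-s)$ renormalization in front of $\mathcal{K}_s^p$, the uniform H\"older bound $[u]_{B,\alpha}$, and the hypothesis $s_0 p>1$, so as to make all the estimates uniform for $s\in[s_0,1)$.

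The crucial step is a uniform energy bound of the form $\mathcal{E}_s^p(u,B_r)\leq C r^{n-1}+\epsilon$, with $C$ independent of $s\in[s_0,1)$. I would test the $\epsilon$-minimality of $u$ against the competitor $v$ that coincides with $u$ outside $B_{r+1}$, equals $-1$ on $B_r$, and interpolates on the annulus via $v(x):=(1-\eta(x))(u(x)+1)-1$ for a fixed Lipschitz cutoff $\eta$. Splitting the Gagliardo double integral defining $\mathcal{K}_s^p(v,B_{r+1})$ into a short-range part (controlled by $|v(x)-v(y)|\lesssim|u(x)-u(y)|+|x-y|$ together with the H\"older estimate $|u(x)-u(y)|\leq[u]_{B,\alpha}|x-y|^\alpha$) and a long-range part (handled by the $L^\infty$ bound $\|v\|_{L^\infty}\leq 1$), one verifies that the factor $(1-s)$ cancels the blow-up $(sp-1)^{-1}$ arising in the tail integral, while the short-range piece is uniformly bounded thanks to the H\"older control. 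This yields the claimed estimate with $C$ depending on $n,p,s_0,\alpha,[u]_{B,\alpha}$ but not on $s$.

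With the uniform energy bound in hand, the proof proceeds as in Theorem~\ref{th:fracp>=2}: I would test $\epsilon$-minimality against the barrier-competitor $\max\{u,\theta_*\}$ (or a refinement in the spirit of Theorem~3.1 of~\cite{DFVERPP}) and apply the one-sided detachment assumption~\eqref{potntiald2} on $W$ to extract a positive gain from the potential term on $\{u<\theta_*\}$. Combining this with a fractional Sobolev--Poincar\'e inequality at scale $r$, whose constant remains bounded uniformly for $s\in[s_0,1)$ thanks to $s_0p>1$, produces a recursive measure-theoretic inequality between the volumes of the relevant super-level sets. Iterating on a chain of concentric balls $B_{r_0+k}$, with $\delta$ chosen small enough that the additive error $\epsilon$ is absorbed at each step, propagates the seed hypothesis~\eqref{Fcon-812345} into the volume growth~\eqref{benz.1} at every scale $r\in[R^*,+\infty)$.

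The main obstacle is the uniform energy bound: without the $(1-s)$ normalization, the competitor's fractional Dirichlet energy carries a factor that diverges as $s\to 1^-$, and without the H\"older hypothesis on $u$ the near-diagonal contribution on the interpolation annulus cannot be controlled uniformly in $s$ for $s$ close to $1$ (a pure $L^\infty$ bound would only yield a finite but $s$-dependent estimate). The two ingredients conspire to yield constants depending on $s_0$ but not on $s$, which is exactly what allows~\eqref{benz.1} to subsequently transfer to the limit local energy~\eqref{NSGL} via the $\Gamma$-convergence statement of Theorem~\ref{12c-dgbnbc5543}.
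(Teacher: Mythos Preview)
Your proposal misidentifies where the H\"older hypothesis enters, and this is a genuine gap rather than a cosmetic difference. The uniform energy bound $\mathcal{E}_s^p(u,B_r)\lesssim r^{n-1}$ (Theorem~\ref{qteanldeeo}) is established with a Lipschitz barrier and needs no H\"older information on $u$; moreover it is \emph{not} used in the proof of Theorem~\ref{CapVieABall} at all---it appears only later, in Corollary~\ref{coro-09}. Also, the competitor $\max\{u,\theta_*\}$ is not admissible (its difference with $u$ is not compactly supported), and the vague appeal to a ``fractional Sobolev--Poincar\'e inequality'' does not match the mechanism that actually drives either Theorem~\ref{th:fracp>=2} or Theorem~\ref{CapVieABall}.

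What the paper does is this. Lemma~\ref{gliorigamieriopachi}, obtained by testing against $v=\min\{u,w\}$ with the barrier $w$ of Theorem~3.1 in~\cite{DFVERPP}, yields the inequality
\[
(1-s)\,\bar{c}_4\, L(a_R^*,d_R^*)\;+\;\frac{c_2}{\hat c_p}\,|b_R^*|\;\le\;\frac{\epsilon}{\hat c_p}\;+\;c_5\int_0^R(R-t+1)^{-sp}V'(t)\,dt,
\]
where $a_R^*,b_R^*$ are superlevel sets of the symmetric decreasing rearrangement $(u-v)^*$. In Theorem~\ref{th:fracp>=2} one exploits the first term on the left via Proposition~\ref{keleetele}; but as $s\to 1$ the factor $(1-s)$ kills that term, so for the stable estimate one must work with $|b_R^*|$ alone. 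Here is where H\"older regularity is decisive: since $u\in C_B^\alpha(\Omega)$, Proposition~\ref{hiolevcgfdb4t54} gives $u-v\in C_c^\alpha$ with seminorm bounded by $4[u]_{B,\alpha}+2C_{s_0,n,m,p,c_1}$, and rearrangement does not increase the modulus of continuity, so $(u-v)^*$ is $\alpha$-H\"older with the same bound. The annulus $b_R^*=B_{r_1+r_2}\setminus B_{r_2}$ is the set where $(u-v)^*$ lies in the interval $\bigl(\tfrac{1+\theta_*}{8},\tfrac{1+\theta_*}{4}\bigr)$; the H\"older bound forces its radial width $r_1$ to be at least a constant $\tilde C$ depending only on $s_0,n,p,m,c_1,\theta_*,\alpha,[u]_{B,\alpha}$. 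Hence $|b_R^*|\gtrsim r_2^{n-1}\gtrsim |a_R^*|^{(n-1)/n}\ge V(R-K)^{(n-1)/n}$ with an $s$-independent constant, and feeding this back into Lemma~\ref{gliorigamieriopachi} (discarding the $(1-s)L$ term entirely) gives the recursive inequality $rV(r)^{(n-1)/n}\lesssim V(3r)+\epsilon$, uniformly in $s\in[s_0,1)$. The iteration (Lemma~C.1 of~\cite{DFVERPP}) then concludes as in Theorem~\ref{th:fracp>=2}.
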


From Theorem~\ref{CapVieABall}, one can establish density estimates for minimizers of $\mathcal{E}_s^p$ that are stable as $s\to 1$ and do not require any additional assumption on the regularity of the minimizer. 

Indeed, it is well known that minimizers for $\mathcal{E}_s^p$ are H\"older continuous, see for instance~\cite{MR3593528,MR3861716,MR3630640} and the references therein. In particular in~\cite{MR3630640}, using fractional De Giorgi classes, M. Cozzi proved H\"older estimates for minimizers of $\mathcal{E}_s^p$ that do not depend on $s$ whenever $s_0\in \left(0,1\right)$, $s\in \left[s_0,1\right)$ and $n\geq 2$, see Theorem~6.4 and Proposition~7.5 in~\cite{MR3630640} or Theorem~2.2 in~\cite{DFVERPP}. Therefore, as a byproduct of Theorem~\ref{CapVieABall}, we obtain the stability of density estimates for minimizers of~$\mathcal{E}_s^p$ as $s\to 1$. The precise statement of this result goes as follows:

\begin{cor}[Stable density estimates for minimizers]\label{bmrlsadcbl}
Let $p\in (1,+\infty)$, $s_0\in \left(\frac{1}{p} ,1\right)$ and $s\in \left[s_0,1\right)$. Also, let $\theta_1,\theta_2\in (-1,\theta]$ and $\theta_*$, $\theta^*$ be defined as in~\eqref{liibfdcvre5555}. 

Moreover, we assume that $u\in X^{s,p}(\Omega)$ is a minimizer for $\mathcal{E}_s^p$ and that there exist~$c_0,r_0\in (0,+\infty)$ such that $B_{r_0}\subset\Omega$ and~\eqref{Fcon-812345} is satisfied. 

Then, there exist~$R^*:=R_{s_0,n,m,p,c_1,\theta_*,r_0}^*\in\left[r_0,+\infty\right)$ , $\tilde{c}:=\tilde{c}_{s_0,n,p,m,\Lambda,c_1,\theta_*,r_0,c_0}\in (0,1)$ and $c:=c_{m,\theta_*}\in(0,+\infty)$ such that, for any $r\in \left[R^*,+\infty\right)$ satisfying $B_{4r}\subset \Omega$,
\begin{equation}\label{lietoca}
c_{m,\theta_*}\int_{B_r\cap \left\lbrace \theta_{*}<u\leq \theta^* \right\rbrace}\left|1+u(x)\right|^m\,dx+\mathcal{L}^n(B_r\cap \left\lbrace u>\theta_2 \right\rbrace)>\tilde{c}r^n.
\end{equation}
\end{cor}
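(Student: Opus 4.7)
The plan is to deduce Corollary~\ref{bmrlsadcbl} as a direct consequence of Theorem~\ref{CapVieABall} by supplying the two inputs the latter requires: that a minimizer of $\mathcal{E}_s^p$ is automatically an $\epsilon$-minimizer for some admissible $\epsilon$, and that it satisfies a priori H\"older bounds which are stable as $s \to 1$.

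First, by comparing Definition~\ref{epsilonmimimizers} with the notion of minimizer used in the statement, any minimizer of $\mathcal{E}_s^p$ is trivially a $0$-minimizer, so $\epsilon = 0$ lies in $[0, \delta)$ for every positive $\delta$ and the smallness hypothesis on $\epsilon$ appearing in Theorem~\ref{CapVieABall} is automatically fulfilled.

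Next I would invoke the uniform H\"older regularity for minimizers of $\mathcal{E}_s^p$ established by Cozzi in~\cite{MR3630640} via fractional De~Giorgi classes (Theorem~6.4 and Proposition~7.5 there, or the restatement in Theorem~2.2 of~\cite{DFVERPP}). For $s_0 \in (0, 1)$, $s \in [s_0, 1)$ and $n \geq 2$, this provides an exponent $\alpha = \alpha_{s_0, n, p} \in (0, 1)$ and a bound on each local seminorm $[u]_{C^\alpha(B_1(x_0))}$, for $B_{R+2}(x_0) \subset \Omega$, depending only on $s_0, n, p$ and on the $L^\infty$ bound $\|u\|_{L^\infty(\R^n)} \leq 1$ built into the definition of $X^{s,p}(\Omega)$. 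Consequently, $u \in C_B^\alpha(\Omega)$ with the seminorm $[u]_{B, \alpha}$ of~\eqref{gliorteitbergf} controlled uniformly in $s$.

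With these two ingredients in hand, I would apply Theorem~\ref{CapVieABall} to $u$ with $\epsilon = 0$, the exponent $\alpha$ and the corresponding H\"older seminorm bound produced above. Inequality~\eqref{benz.1} then specializes precisely to~\eqref{lietoca}. Since $\alpha$ and $[u]_{B, \alpha}$ depend only on $s_0, n, p, m$ and on the structural constants of $W$, and not on the particular $s \in [s_0, 1)$, the output constants $R^*$, $\tilde c$ and $c$ inherit exactly the list of dependencies claimed in the statement and remain stable as $s \to 1$. The main obstacle is the $s$-uniformity of the H\"older bound; this is exactly what Cozzi's fractional De~Giorgi class argument delivers, and once this input is invoked the reduction to Theorem~\ref{CapVieABall} is immediate.
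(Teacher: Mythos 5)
Your reduction to Theorem~\ref{CapVieABall} is exactly the paper's strategy: a minimizer is a $0$-minimizer, so the smallness condition on $\epsilon$ is vacuous, and the only substantive input is an $s$-uniform H\"older bound so that $\alpha$ and $[u]_{B,\alpha}$ do not degenerate as $s\to1$. For $n\geq 2$ your argument is complete and matches the paper's (Cozzi's fractional De Giorgi estimates, together with the uniform control of the nonlocal tail and of $\|W\|_{L^\infty}$ via~\eqref{potential}, give $[u]_{B,\alpha}\leq C_{n,s_0,p,m,\Lambda}$).

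However, there is a genuine gap: the regularity result you invoke is, as you yourself note, stated only for $n\geq 2$, and you never treat $n=1$, while the corollary is claimed for every $n\in\N$. The paper closes this case by a different mechanism: since $s\geq s_0>\frac1p$, one has $sp>1=n$, so the fractional Morrey inequality (Theorem~\ref{basdcfzrty65}, or Theorem~8.2 in~\cite{MR2944369}) yields $u\in C^{\alpha}_{\textit{loc}}$ with $\alpha=s-\frac1p\geq s_0-\frac1p$, and
\begin{equation*}
\left[u\right]_{C^{s_0-\frac1p}(B_1(x_0))}^p\leq \frac{C_p}{(sp-1)^{p-1}}\,\mathcal{E}_s^p(u,B_R),
\end{equation*}
after which the energy upper bound of Theorem~\ref{qteanldeeo} (the branch $s\in(\frac1p,1)$, applied to the minimizer) gives a bound depending only on $s_0$, $p$, $m$, $\Lambda$. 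Without this step the $s$-uniform control of $[u]_{B,\alpha}$ in dimension one is unjustified, so you should either add this Morrey--plus--energy-bound argument or restrict your claim to $n\geq2$.
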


The presence of the first term in the left-hand side of~\eqref{benz},~\eqref{benz.1} and~\eqref{lietoca} is a consequence of the lower bound in~\eqref{potential}. To be more precise, under some additional assumptions on the lower bound for $W$ (see~\eqref{condiWzione} below) it is possible to reabsorb such term into the right-hand side of the inequality, obtaining the ``full density estimates''. This is made possible thanks to the following upper bound for the energy $\mathcal{E}_s^p$.  

\begin{thm}\label{qteanldeeo}
Let $s\in (0,1)$ and $u$ be a minimizer of $\mathcal{E}_s^p$ in $X^{s,p}(B_{R+2})$ with $R\geq 2$. Then, there exists $\bar{C}:=\bar{C}_{n,p,m,\Lambda}\in(0,+\infty)$ such that 
\begin{equation}\label{BFAOTE}
\mathcal{E}_s^p(u,B_R)\leq \frac{\bar{C}}{s}\begin{dcases}
\frac{R^{n-sp}}{1-sp}\quad &\mbox{if}\quad s\in \left(0,\frac{1}{p}\right)\\
R^{n-1}\log(R)\quad &\mbox{if}\quad s=\frac{1}{p}\\
\frac{R^{n-1}}{sp-1}\quad &\mbox{if}\quad s\in \left(\frac{1}{p},1\right).
\end{dcases}
\end{equation} 
\end{thm}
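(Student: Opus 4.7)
The plan is to exploit the minimality of $u$ against a carefully constructed competitor and bound each contribution to the competitor's energy explicitly. Let $\phi:\R^n\to[0,1]$ be a radially symmetric Lipschitz cutoff with $\phi\equiv 0$ on $B_{R+1}$, $\phi\equiv 1$ on $\R^n\setminus B_{R+2}$, and $|\nabla\phi|\leq 1$, and define
\begin{equation*}
v(x):=-1+\phi(x)\bigl(u(x)+1\bigr).
\end{equation*}
Then $v\in[-1,1]$, $v\equiv -1$ on $B_{R+1}$, and $v\equiv u$ on $\R^n\setminus B_{R+2}$, so $v\in X^{s,p}_{u}(B_{R+2})$. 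Since the kinetic term is monotone in the domain (a direct partitioning of the Gagliardo integrals yields $\mathcal{K}_s^p(u,B_R)\leq\mathcal{K}_s^p(u,B_{R+2})$), combining this monotonicity with the minimality inequality $\mathcal{E}_s^p(u,B_{R+2})\leq\mathcal{E}_s^p(v,B_{R+2})$ reduces the task to bounding $\mathcal{E}_s^p(v,B_{R+2})$.

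The potential term is straightforward: since $|v+1|=\phi|u+1|\leq 2\phi$ vanishes on $B_{R+1}$, we have $\int_{B_{R+2}}W(v)\,dx\leq\Lambda 2^m\mathcal{L}^n(B_{R+2}\setminus B_{R+1})\leq C R^{n-1}$. For the kinetic term, the dominant contribution arises from the interaction between $B_{R+1}$ (where $v\equiv-1$) and $\R^n\setminus B_{R+2}$ (where $v\equiv u$), on which $|v(x)-v(y)|^p=|u(y)+1|^p\leq 2^p$, giving
\begin{equation*}
\iint_{B_{R+1}\times(\R^n\setminus B_{R+2})}\frac{|v(x)-v(y)|^p}{|x-y|^{n+sp}}\,dx\,dy\leq 2^p\int_{B_{R+1}}\int_{\R^n\setminus B_{R+2}}\frac{dy\,dx}{|x-y|^{n+sp}}.
\end{equation*}
Using $|x-y|\geq R+2-|x|$ for $x\in B_{R+1}$ and $y\in\R^n\setminus B_{R+2}$, the inner integral is bounded by $\frac{C_n}{sp}(R+2-|x|)^{-sp}$, and a further radial integration in $x$ via $t=R+2-|x|$ reduces the estimate to $\frac{C_n}{sp}(R+2)^{n-1}\int_1^{R+2} t^{-sp}\,dt$, which in the three regimes $sp<1$, $sp=1$, $sp>1$ equals (up to constants) $R^{n-sp}/(1-sp)$, $R^{n-1}\log R$, $R^{n-1}/(sp-1)$, respectively. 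Multiplying by the factor $(1-s)$ in front of $\mathcal{K}_s^p$ yields exactly the three cases of~\eqref{BFAOTE}, with the $\frac{1}{s}$ prefactor arising from the $\frac{1}{sp}$ in the inner radial integration.

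The remaining kinetic terms come from the interactions with the transition annulus $A:=B_{R+2}\setminus B_{R+1}$: namely $B_{R+1}\times A$, $A\times A$, and $A\times(\R^n\setminus B_{R+2})$. Starting from the pointwise decomposition
\begin{equation*}
|v(x)-v(y)|\leq 2\,|\phi(x)-\phi(y)|+\phi(y)\,|u(x)-u(y)|,
\end{equation*}
the Lipschitz piece involving $|\phi(x)-\phi(y)|^p$ is estimated directly (using $|\phi(x)-\phi(y)|\leq\min\{|x-y|,1\}$ together with the fact that $\phi$ is constant outside $A$) and contributes at most $CR^{n-1}$. The main technical obstacle is the piece involving $|u(x)-u(y)|^p$, which a priori could reintroduce the Gagliardo energy of $u$ on the right-hand side and create a circular bound; this is overcome by observing that the prefactor $\phi(y)^p$ vanishes on $B_{R+1}$, localising the bad term to $\iint_{A\times\R^n}|u(x)-u(y)|^p/|x-y|^{n+sp}\,dx\,dy$, which is then controlled by $CR^{n-1}$ using the H\"older regularity of minimizers of $\mathcal{E}_s^p$ (see, e.g., \cite{MR3630640}) or, alternatively, via a hole-filling iteration on nested annuli. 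Combining the potential and kinetic estimates completes the proof of~\eqref{BFAOTE}.
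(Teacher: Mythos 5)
Your overall strategy (compare $u$ with a competitor that equals $-1$ on $B_{R+1}$ and equals $u$ outside $B_{R+2}$, then compute the competitor's energy) is the right one, and your estimates for the potential term and for the interaction between $B_{R+1}$ and $\R^n\setminus B_{R+2}$ are correct and match the paper's computation. However, there is a genuine gap in the treatment of the annulus terms. With the competitor $v=-1+\phi(u+1)$, the kinetic energy of $v$ on $A\times\R^n$ contains the term $\iint \phi(y)^p|u(x)-u(y)|^p|x-y|^{-n-sp}$, i.e.\ a localized Gagliardo seminorm of $u$ near $A=B_{R+2}\setminus B_{R+1}$, and neither of your two proposed ways to control it works as stated. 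The H\"older route fails quantitatively: to make $|u(x)-u(y)|^p|x-y|^{-n-sp}$ integrable near the diagonal from a bound $|u(x)-u(y)|\leq [u]_\alpha|x-y|^\alpha$ you need $\alpha>s$, whereas the regularity theory for minimizers of $\mathcal{E}_s^p$ produces exponents $\alpha$ that are in general well below $s$ (and, even if integrability held, the resulting constant would not have the explicit, stable-in-$s$ form $\bar C/s$ with the prescribed $1/(1-sp)$, $\log R$, $1/(sp-1)$ behaviour that the theorem demands). Worse, in this paper the case $n=1$ of the H\"older estimate (Corollary~\ref{bmrlsadcbl}) is itself \emph{derived from} Theorem~\ref{qteanldeeo}, so invoking H\"older regularity here would be circular. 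The ``hole-filling iteration'' is not carried out, and a Caccioppoli-type bound for $(1-s)[u]_{W^{s,p}}^p$ on the annulus with the correct explicit $s$-dependence is essentially as hard as the theorem itself.

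The paper sidesteps this entirely by choosing the competitor $v:=\min\{u,\psi\}$ with the \emph{explicit} barrier $\psi(x)=-1+2\min\{(|x|-R-1)^+,1\}$. On the coincidence set $A=\{v=\psi\}\supset B_{R+1}$ one has $v(A,A)=\psi(A,A)$ and $W(v)=W(\psi)$, while the cross term obeys $|v(x)-v(y)|\leq\max\{|u(x)-u(y)|,|\psi(x)-\psi(y)|\}$, so the $u(A,A^c)$ contribution cancels identically between the two sides of the minimality inequality. The only quantity involving $u$ that survives is $u(B_R,\R^n\setminus B_{R+1})$, taken over sets at positive distance, which is bounded by $2^p$ times an explicit kernel integral. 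If you want to salvage your argument, replace $-1+\phi(u+1)$ by a min/max with an explicit Lipschitz barrier so that the competitor's values in the transition annulus do not depend on $u$; as written, the annulus term is an unproven (and, via the routes you suggest, unprovable) step.
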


This result was originally proved in~\cite{MR3133422}
for $p=2$ and without the explicit dependence of the constant with respect to $s$.  

From Theorem~\ref{th:fracp>=2}, Corollary~\ref{bmrlsadcbl} and Theorem~\ref{qteanldeeo} we obtain the following density estimate: 

\begin{cor}\label{coro-09}
Let $\theta_1,\theta_2\in (-1,1)$, $\theta_*$, $\theta^*$ as in~\eqref{liibfdcvre5555} and $u\in X^{s,p}(\Omega)$ be a minimizer for $\mathcal{E}_s^p$.  Moreover,  let $c_0,r_0\in (0,+\infty)$ be such that $B_{r_0}\subset \Omega$ and~\eqref{Fcon-812345} is satisfied.   

Also, we assume that for every $\mu\in (-1,1)$ there exists~$\lambda_\mu\in (0,+\infty)$ such that, for every $x\in (-1,1)$,
\begin{equation}\label{condiWzione}
\lambda_\mu \chi_{\left(-\infty,\mu\right]}(x)\left|1+x\right|^m  \leq W(x).
\end{equation}
Then, there exist~$R^*:=R_{s,n,m,p,c_1,\theta_*,r_0}^*\in \left[r_0,+\infty\right)$, $\tilde{c}:=\tilde{c}_{s,n,p,m,\Lambda,c_1,\theta_*,r_0,c_0}\in (0,1)$ and $c:=c_{m,\theta_*}\in(0,+\infty)$ such that, for any $r\in \left[R^*,+\infty\right)$ satisfying $B_{4r}\subset \Omega$,
\begin{equation}\label{benzina}
\mathcal{L}^n(B_r\cap \left\lbrace u>\theta_2 \right\rbrace)>\tilde{c}r^n.
\end{equation}
Furthermore, if $s_0\in \left(\frac{1}{p},1\right)$ and $s\in \left[s_0,1\right)$, the quantities $R^{*}$ and~$\tilde{c}$ do not depend on~$s$, namely
\begin{equation}\label{cala}
R^{*}:= R_{s_0,n,m,p,c_1,\theta_*,r_0}^{*}\quad\mbox{and}\quad \tilde{c}:=\tilde{c}_{s_0,n,p,m,\Lambda, c_1,\theta_*,r_0,c_0}. 
\end{equation}
\end{cor}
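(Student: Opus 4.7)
The plan is to apply Corollary~\ref{bmrlsadcbl} to obtain the density estimate with the integral penalty term already controlled, and then to absorb that penalty term into the right-hand side using the universal energy upper bound of Theorem~\ref{qteanldeeo}. The stronger lower bound~\eqref{condiWzione} plays a double role: it enlarges the admissible range of $\theta_1,\theta_2$ from $(-1,\theta]$ to all of $(-1,1)$, and it supplies the comparison $|1+u|^m\leq W(u)/\lambda_\mu$ needed to estimate the penalty integral by the potential energy.

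First I would fix $\mu\in[\max\{\theta_1,\theta_2\},1)$ and observe that, by~\eqref{condiWzione}, the potential $W$ satisfies the structural bound~\eqref{potential} with $\theta$ replaced by $\mu$ and $\lambda$ replaced by $\lambda_\mu$, while~\eqref{potntiald2} and all other hypotheses on $W$ remain unchanged. Hence Corollary~\ref{bmrlsadcbl} applies with the new pair $(\mu,\lambda_\mu)$ in place of $(\theta,\lambda)$ and with $\theta_1,\theta_2\in(-1,\mu]$, yielding constants $R_0^*,\tilde{c}_0$ and $c$ such that for every $r\geq R_0^*$ with $B_{4r}\subset\Omega$,
\begin{equation*}
c\int_{B_r\cap\{\theta_*<u\leq\theta^*\}}|1+u(x)|^m\,dx+\mathcal{L}^n(B_r\cap\{u>\theta_2\})>\tilde{c}_0\, r^n.
\end{equation*}

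Next I would dispose of the penalty integral. On the set $\{\theta_*<u\leq\theta^*\}$ one has $u\leq\theta^*\leq\mu$, so~\eqref{condiWzione} gives $|1+u|^m\leq W(u)/\lambda_\mu$, whence
\begin{equation*}
\int_{B_r\cap\{\theta_*<u\leq\theta^*\}}|1+u|^m\,dx\leq\lambda_\mu^{-1}\int_{B_r}W(u)\,dx\leq\lambda_\mu^{-1}\,\mathcal{E}_s^p(u,B_r).
\end{equation*}
Since $B_{r+2}\subset B_{4r}\subset\Omega$ whenever $r\geq 2$, Theorem~\ref{qteanldeeo} bounds $\mathcal{E}_s^p(u,B_r)$ by $O(r^{n-\min\{1,sp\}}\log r)$, which is $o(r^n)$ as $r\to+\infty$. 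Therefore there exists $R^*\geq R_0^*$ large enough that the penalty term is at most $\tilde{c}_0\, r^n/2$ for every $r\geq R^*$, leaving the clean estimate $\mathcal{L}^n(B_r\cap\{u>\theta_2\})>(\tilde{c}_0/2)\, r^n$, which is~\eqref{benzina}.

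For the stability statement~\eqref{cala}, I would verify that both inputs become $s$-independent when $s\in[s_0,1)$ with $s_0>1/p$: Corollary~\ref{bmrlsadcbl} already provides this uniformity, while in Theorem~\ref{qteanldeeo} one is always in the regime $sp>1$, so the prefactor is controlled by $1/(sp-1)\leq 1/(s_0 p-1)$ uniformly. The only real point of care in the whole argument is the first relabelling: one has to check that when the proof of Theorem~\ref{th:fracp>=2} (and hence of Corollary~\ref{bmrlsadcbl}) is read with $(\mu,\lambda_\mu)$ in place of $(\theta,\lambda)$, the parameter $\lambda$ enters only quantitatively in the output constants $R^*$ and $\tilde{c}$ and not structurally, so that no other hypothesis is disturbed. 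Beyond this bookkeeping, the corollary is a direct consequence of combining the two main results already at our disposal.
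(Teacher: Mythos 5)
Your strategy coincides with the paper's: obtain the density estimate with the penalty term from the earlier results, bound the penalty integral by $\lambda_\mu^{-1}\mathcal{E}_s^p(u,B_r)$ via~\eqref{condiWzione}, invoke the energy upper bound of Theorem~\ref{qteanldeeo} to see that this is $o(r^n)$, and absorb it for $r$ large; the treatment of the stable case via the $1/(s_0p-1)$ bound is also the same. Your explicit relabelling $(\theta,\lambda)\mapsto(\mu,\lambda_\mu)$ to justify taking $\theta_1,\theta_2$ in all of $(-1,1)$ is a point the paper leaves implicit, and you handle it correctly.

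The one defect is the choice of input for the first (non-stable) part of the statement: you invoke Corollary~\ref{bmrlsadcbl}, but that corollary is only available for $s\in[s_0,1)$ with $s_0\in\left(\frac{1}{p},1\right)$, because it relies on the $s$-uniform H\"older estimates. The first conclusion~\eqref{benzina} of Corollary~\ref{coro-09} is asserted for every $s\in(0,1)$, so for $s\in\left(0,\frac{1}{p}\right]$ your argument has no starting point. The fix is exactly what the paper does there: apply Theorem~\ref{th:fracp>=2} with $\epsilon=0$ (minimizers are $0$-minimizers), which holds for all $s\in(0,1)$ and yields the same inequality with the penalty term, at the cost of $R^*$ and $\tilde{c}$ depending on $s$ — which is all the first part of the statement claims. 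With that substitution the rest of your argument goes through unchanged, and Corollary~\ref{bmrlsadcbl} is then used only where you need it, namely for the $s$-independence claim~\eqref{cala}.
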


\subsection{Local density estimates}\label{ere-optre}

In this section we state the main result concerning density estimates for minimizers of  $\mathcal{E}_1^p$. To do so, we first need to state the following $\Gamma$ -convergence result. In particular, we show that $\mathcal{E}_s^p$ $\Gamma$-converges to $\mathcal{E}_1^p$ as $s\to 1$. This result will be employed to prove density estimates for minimizers of $\mathcal{E}_1^p$ (see Theorem~\ref{viredghloprec45} below), starting from the density estimates for $\epsilon$-minimizers of $\mathcal{E}_s^p$ discussed  in Theorem~\ref{CapVieABall}. 

\begin{thm}[$\Gamma$-convergence of $\mathcal{E}_s^p$]\label{12c-dgbnbc5543}
Let $\left\lbrace s_k\right\rbrace_{k\in\N}\subset (0,1)$ such that $s_k\to 1$. Also, let $p\in (1,+\infty)$ and $\Omega\subset\R^n$ be open, bounded and Lipschitz. Then, we have the following:

\begin{itemize}
\item[(i)] Let $\left\lbrace u_{s_k} \right\rbrace_{k\in\N} \subset X^{s_k,p}(\Omega)$ and $M\in (0,+\infty)$ be such that
\begin{equation}\label{kkkkfvetrre}
\sup_{k\in\N} (1-s_k)\mathcal{K}_{s_k}^p(u_{s_k},\Omega) \leq M.
\end{equation}
Then, up to a subsequence, $u_{s_k}\to u$ in $L^p(\Omega)$ for some $u\in  W^{1,p}(\Omega)$. 

Moreover, if $u_{s_k}\in X_0^{s_k,p}(\Omega)$ for every $k$, we obtain that $u\in W_0^{1,p}(\Omega)$. 
\item[(ii)] For any $u\in L^p(\Omega)$ and $\left\lbrace u_{s_k} \right\rbrace_{k\in\N} \subset X^{s_k,p}(\Omega)$ such that $u_{s_k}\to u$ in $L^p(\Omega)$ it holds that
\begin{equation}\label{utrc5342}
\mathcal{E}_1^p(u,\Omega)\leq \liminf_{k\to +\infty} \mathcal{E}_{s_k}^p(u_{s_k},\Omega).
\end{equation}
\item[(iii)] For any $u\in L^p(\Omega)$ satisfying~$\left|u\right|\leq 1$ a.e. in $\Omega$ there exists~$\left\lbrace u_{s_k}\right\rbrace_{k\in\N} \subset X^{s_k,p}(\Omega)$ such that $u_{s_k}\to u$ in $L^p(\Omega)$ and
\begin{equation*}
\mathcal{E}_1^p(u,\Omega)\geq \limsup_{k\to +\infty} \mathcal{E}_{s_k}^p(u_{s_k},\Omega). 
\end{equation*}
Furthermore, if $u\in X^{1,p}(\Omega)$, there exists~$v\in X^{s,p}(\Omega)$ for every $s\in(0,1)$ such that $u\equiv v$ in $\Omega$ and 
\begin{equation}\label{glordvendalat}
\mathcal{E}_1^p(u,\Omega)=\limsup_{k\to +\infty} \mathcal{E}_{s_k}^p(v,\Omega). 
\end{equation}
\end{itemize} 
\end{thm}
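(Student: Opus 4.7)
The plan is to derive all three parts from the Bourgain--Brezis--Mironescu (BBM) and Ponce machinery for fractional Sobolev spaces, adapted to account for the interior/exterior split in $\mathcal{K}_s^p(\,\cdot\,,\Omega)$. The crucial algebraic identity I will use throughout is
\begin{equation*}
\mathcal{K}_s^p(v,\Omega)=\tfrac12\iint_{\R^n\times\R^n}\tfrac{|v(x)-v(y)|^p}{|x-y|^{n+sp}}\,dx\,dy\;-\;\tfrac12\iint_{(\R^n\setminus\Omega)\times(\R^n\setminus\Omega)}\tfrac{|v(x)-v(y)|^p}{|x-y|^{n+sp}}\,dx\,dy,
\end{equation*}
obtained by $(x,y)\leftrightarrow(y,x)$ symmetry, which rewrites the cross integral in a form amenable to BBM.

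For (i), since $|u_{s_k}|\le 1$ and $\Omega$ is bounded, the sequence is uniformly bounded in $L^p(\Omega)$. The hypothesis~\eqref{kkkkfvetrre}, together with the non-negativity of the cross integral, controls the restricted Gagliardo energy $(1-s_k)\iint_{\Omega\times\Omega}|u_{s_k}(x)-u_{s_k}(y)|^p|x-y|^{-n-s_kp}\,dx\,dy$ uniformly in $k$. I would then invoke the BBM--Ponce compactness theorem on the bounded Lipschitz domain $\Omega$ to extract a subsequence converging in $L^p(\Omega)$ to some $u\in W^{1,p}(\Omega)$. For the $X_0^{s_k,p}(\Omega)$ case, I extend $u_{s_k}$ by zero to a fixed ball $B\supset\overline{\Omega}$; the same compactness on $B$ produces a limit supported in $\overline\Omega$, whence $u\in W_0^{1,p}(\Omega)$.

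For (ii), I apply Ponce's $\liminf$ theorem to the interior piece, obtaining
\begin{equation*}
\liminf_{k\to+\infty}(1-s_k)\tfrac12\iint_{\Omega\times\Omega}\tfrac{|u_{s_k}(x)-u_{s_k}(y)|^p}{|x-y|^{n+s_kp}}\,dx\,dy\;\ge\;\tfrac{K_{n,p}}{2p}\int_\Omega|\nabla u|^p\,dx=\mathcal{K}_1^p(u,\Omega),
\end{equation*}
then discard the non-negative cross term. For the potential, I extract an a.e. convergent subsequence from the $L^p$ convergence and combine the lower semicontinuity of $W$ (see~\eqref{po12345}) with Fatou's lemma to conclude $\liminf\int_\Omega W(u_{s_k})\ge \int_\Omega W(u)$. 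Summing gives~\eqref{utrc5342}.

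For (iii), if $\mathcal{E}_1^p(u,\Omega)=+\infty$ any $L^p$-approximation (e.g., mollification followed by truncation to $[-1,1]$) suffices. Otherwise $u\in W^{1,p}(\Omega)$ with $|u|\le 1$, and since $\Omega$ is bounded Lipschitz I use a Sobolev extension to produce $v\in W^{1,p}(\R^n)$ agreeing with $u$ on $\Omega$, then truncate to $[-1,1]$ (which does not increase the Dirichlet energy). Applying the BBM limit to both terms of the identity displayed at the start, once on $\R^n$ and once on the open set $\R^n\setminus\overline\Omega$ (whose Lipschitz boundary makes BBM applicable), yields
\begin{equation*}
(1-s_k)\mathcal{K}_{s_k}^p(v,\Omega)\;\longrightarrow\;\tfrac{K_{n,p}}{2p}\int_{\R^n}|\nabla v|^p\,dx-\tfrac{K_{n,p}}{2p}\int_{\R^n\setminus\Omega}|\nabla v|^p\,dx=\tfrac{K_{n,p}}{2p}\int_\Omega|\nabla u|^p\,dx,
\end{equation*}
while $\int_\Omega W(v)\,dx=\int_\Omega W(u)\,dx$ is constant in $k$. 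For $u\in X^{1,p}(\Omega)$ the single function $v$, independent of $k$, gives~\eqref{glordvendalat}; membership $v\in X^{s,p}(\Omega)$ for every $s\in(0,1)$ holds because $v\in W^{1,p}(\R^n)\cap L^\infty(\R^n)$ and $\Omega$ is bounded, so all relevant fractional seminorms are finite. For merely $L^p$ targets with $|u|\le 1$, a density-and-diagonal argument completes the recovery.

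The main obstacle is the recovery statement (iii), and specifically showing that the \emph{limit} (not only a $\limsup$) of $(1-s_k)\mathcal{K}_{s_k}^p(v,\Omega)$ equals $\mathcal{K}_1^p(u,\Omega)$. This forces me to apply BBM on the exterior set $\R^n\setminus\overline{\Omega}$, where the Lipschitz regularity of $\partial\Omega$ is essential both for the extension $v$ and for the applicability of BBM on the complement. The subtraction of the two BBM limits is what neutralizes the boundary cross term $\iint_{\Omega\times(\R^n\setminus\Omega)}$ in the limit, a cancellation that has no analogue in the $\liminf$ direction (where that same term can be thrown away cost-free).
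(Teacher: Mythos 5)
Your overall architecture (compactness, $\liminf$, recovery via a compactly supported extension) matches the paper's, and the decomposition identity $\mathcal{K}_s^p(v,\Omega)=\tfrac12\iint_{\R^n\times\R^n}-\tfrac12\iint_{(\R^n\setminus\Omega)^2}$ is correct; but the way you execute each part is genuinely different. For (i) and (ii) you outsource the analytic core to the Bourgain--Brezis--Mironescu/Ponce compactness and $\liminf$ theorems on the bounded Lipschitz domain $\Omega$, whereas the paper reproves these from scratch: compactness via an explicit translation estimate (Proposition~\ref{hndvcfwslop8}) combined with Fr\'echet--Kolmogorov, and the $\liminf$ via mollification, a Jensen-type monotonicity claim, and a pointwise Taylor expansion of the mollified functions. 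Your route is shorter and entirely legitimate provided the cited results are quoted in a form covering the nonquadratic exponent $p$ and the $L^p$-convergence hypothesis; your zero-extension argument for the $W^{1,p}_0$ case also works, since for traceless functions $\mathcal{K}_{s_k}^p(u_{s_k},\Omega)$ equals half the full Gagliardo seminorm on $\R^n$, so the extension's seminorm on a fixed ball is controlled. For (iii) the two proofs diverge most: the paper proves directly (Proposition~\ref{prop-cont}) that $(1-s)$ times the cross term $\int_\Omega\int_{\R^n\setminus\Omega}$ vanishes as $s\to1$ for the compactly supported truncated extension $v$, and then invokes BBM only on the bounded set $\Omega$; you instead apply BBM twice, on $\R^n$ and on $\R^n\setminus\overline\Omega$, and subtract. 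The subtraction does produce $\tfrac{K_{n,p}}{2p}\int_\Omega|\nabla v|^p$, and the constant-in-$k$ recovery sequence then gives~\eqref{glordvendalat} exactly as in the paper.

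The one step you should not treat as off-the-shelf is the BBM limit on the unbounded exterior domain $\R^n\setminus\overline\Omega$. The standard statements (including Corollary~2 of the reference the paper uses) are for bounded extension domains or for all of $\R^n$; for the exterior of a bounded Lipschitz set you need to justify both the far-field contribution (harmless here, since $v$ is bounded with compact support, hence in $L^p$) and, more importantly, the $\limsup$ half of the limit near $\partial\Omega$ from outside, which again requires an extension or a direct boundary estimate. That estimate is essentially the content of the paper's proof that the cross term is $o\bigl((1-s)^{-1}\bigr)$, so in practice your subtraction trick does not avoid the boundary analysis, it only repackages it; as written, this step needs to be filled in rather than cited.
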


As aforementioned, making use of Theorem~\ref{CapVieABall} and Theorem~\ref{12c-dgbnbc5543} we can establish density estimates for minimizers of $\mathcal{E}_1^p$. The statement of the result is the following:

\begin{thm}\label{viredghloprec45}
Let $\Omega\subset\R^n$ be open and bounded, $p\in(1,+\infty)$, $m\in[p,+\infty)$ and $\theta_1,\theta_2\in (-1, \theta]$. Also, we set $\theta_{*}$ and $\theta^*$ as in~\eqref{liibfdcvre5555}. Moreover, we assume that $u$ is a minimizer for $\mathcal{E}_1^p$ in $X^{1,p}(\Omega)$ and  for some $c_0,r_0\in (0,+\infty)$  satisfying $B_{r_0}\subset\Omega$ 
\begin{equation}\label{xldvaxknterv543}
\mathcal{L}^n(B_{r_0}\cap \left\lbrace u>\theta_1\right\rbrace)\geq c_0.
\end{equation} 
Then, there exist~$R^*:=R_{n,m,p,c_1,\theta_*,c_0}^*\in\left[r_0,+\infty\right)$, $\tilde{c}:=\tilde{c}_{n,p,m,\Lambda,c_1,\theta_*,r_0,c_0}\in(0,1)$ and $c_{m,\theta_*}\in(0,+\infty)$ such that,
for every $r\in \left[R^*,+\infty\right)$ for which $B_{4r}\subset\Omega$,
\begin{equation*}
c_{m,\theta_*}\int_{B_r\cap \left\lbrace \theta_{*}<u\leq \theta^* \right\rbrace}\left|1+u(x)\right|^m\,dx+\mathcal{L}^n(B_r\cap \left\lbrace u>\theta_2 \right\rbrace)>\tilde{c}r^n
\end{equation*}
\end{thm}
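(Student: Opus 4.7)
The strategy is to deduce the local density estimate from Theorem~\ref{CapVieABall} via the $\Gamma$-convergence result of Theorem~\ref{12c-dgbnbc5543}. Fix $r\geq r_0$ with $B_{4r}\subset\Omega$. Since balls are Lipschitz, Theorem~\ref{12c-dgbnbc5543}(iii) applied to $u|_{B_{4r}}\in X^{1,p}(B_{4r})$ yields a function $v_r:\R^n\to[-1,1]$ with $v_r\equiv u$ in $B_{4r}$, $v_r\in X^{s,p}(B_{4r})$ for every $s\in(0,1)$, and $\mathcal{E}_{s_k}^p(v_r,B_{4r})\to \mathcal{E}_1^p(u,B_{4r})$ as $s_k\to 1$ (the $\liminf$ bound comes from~(ii) applied to the constant sequence $v_r$, combined with the $\limsup$ bound from~(iii)). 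I will apply Theorem~\ref{CapVieABall} to $v_r$.

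The key step is to show that $v_r$ is an $\epsilon_s$-minimizer of $\mathcal{E}_s^p$ in $B_{4r}$ with $\epsilon_s\to 0$ as $s\to 1$. Argue by contradiction: suppose there exist $\epsilon_0>0$, $s_k\to 1$ and $w_k\in X_{v_r}^{s_k,p}(B_{4r})$ with $\mathcal{E}_{s_k}^p(w_k,B_{4r})+\epsilon_0<\mathcal{E}_{s_k}^p(v_r,B_{4r})$. The right-hand side is uniformly bounded, so $(1-s_k)\mathcal{K}_{s_k}^p(w_k,B_{4r})$ is bounded, and by Theorem~\ref{12c-dgbnbc5543}(i), up to a subsequence $w_k\to w$ in $L^p(B_{4r})$ with $w\in W^{1,p}(B_{4r})$. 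Applying the second statement of~(i) to the differences $w_k-v_r\in X_0^{s_k,p}(B_{4r})$ (after a harmless rescaling by $1/2$ to enforce the $L^\infty$ bound), we deduce $w-u\in W_0^{1,p}(B_{4r})$, so the zero extension of $w$ outside $B_{4r}$ is an admissible competitor in the local problem. Combining the liminf inequality~\eqref{utrc5342} with the energy convergence for $v_r$,
\[
\mathcal{E}_1^p(u,B_{4r})=\lim_k \mathcal{E}_{s_k}^p(v_r,B_{4r})\geq \liminf_k \mathcal{E}_{s_k}^p(w_k,B_{4r})+\epsilon_0 \geq \mathcal{E}_1^p(w,B_{4r})+\epsilon_0,
\]
contradicting the minimality of $u$ (since $w\equiv u$ outside $B_{4r}$, the energies of $u$ and $w$ in $\Omega\setminus B_{4r}$ cancel).

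It remains to verify the hypotheses of Theorem~\ref{CapVieABall} for $v_r$. Classical regularity for minimizers of $\mathcal{E}_1^p$ (De Giorgi–Nash–Moser / $p$-Laplace theory) gives $u\in C^\alpha_{\mathrm{loc}}(\Omega)$ for some $\alpha\in(0,1)$ with seminorm controlled in terms of the distance from $B_{4r}$ to $\partial\Omega$; since $v_r\equiv u$ in $B_{4r}$, this yields $v_r\in C_B^\alpha(B_{4r})$ with the required seminorm bound. The measure condition~\eqref{Fcon-812345} for $v_r$ is immediate from~\eqref{xldvaxknterv543} because $v_r\equiv u$ on $B_{r_0}\subset B_{4r}$.

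Finally, fix any $s_0\in(1/p,1)$. Theorem~\ref{CapVieABall} provides a threshold $\delta>0$, an $R^*\geq r_0$ and a constant $\tilde c\in(0,1)$, all \emph{independent of} $s\in[s_0,1)$, such that if $r\geq R^*$, $B_{4r}\subset\Omega$ and $\epsilon_s<\delta$, then~\eqref{benz.1} holds for $v_r$. By the $\epsilon$-minimality established above, for each admissible $r$ we may select $s=s(r)\in[s_0,1)$ sufficiently close to~$1$ so that $\epsilon_s<\delta$. Since $v_r\equiv u$ in $B_r\subset B_{4r}$, the integral and level-set terms in~\eqref{benz.1} for $v_r$ coincide with those for $u$, giving exactly the desired estimate. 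The final constants inherit the dependencies stated in Theorem~\ref{CapVieABall} with $s_0$ playing an auxiliary role that can be fixed once and for all, so they depend only on $n,p,m,\Lambda,c_1,\theta_*,r_0,c_0$. The main obstacle is the $\epsilon$-minimality step: it requires the delicate compactness/liminf machinery of Theorem~\ref{12c-dgbnbc5543} together with the trace bookkeeping needed to turn the $L^p$-limit $w$ into a legitimate admissible competitor for the local Dirichlet problem satisfied by $u$.
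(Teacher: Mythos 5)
Your proposal is correct and follows essentially the same route as the paper: extend $u$ via Theorem~\ref{12c-dgbnbc5543}(iii), use the compactness and $\liminf$ statements (i)--(ii) together with the minimality of $u$ to show the extension is an $\epsilon$-minimizer of $\mathcal{E}_s^p$ for $s$ close to $1$, invoke classical H\"older regularity, and conclude with Theorem~\ref{CapVieABall}. The only (harmless) variations are that you argue the approximate minimality by contradiction with arbitrary competitors where the paper constructs the actual minimizers $w_{s_k}$ of the nonlocal problems, and that you localize the $\Gamma$-convergence to the balls $B_{4r}$ rather than working on all of $\Omega$ — which, incidentally, neatly sidesteps the Lipschitz hypothesis on $\Omega$ required by Theorem~\ref{12c-dgbnbc5543}.
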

We recall that such a result was already proved in~\cite{1} for quasiminimizers (and therefore also for minimizers) under a suitable constraint on $n$, $p$ and $m$. More precisely, in Theorem~1.1 in~\cite{1}, it is stated that density estimates hold for quasiminimizers of~$\mathcal{E}_1^p$ if $n$, $p$ and $m$ satisfy 
\begin{equation}\label{scbdgetoygt6}
\frac{pm}{m-p}>n.
\end{equation} 
As stated in Remark~1.7 in~\cite{1}, it is not known whether the condition in~\eqref{scbdgetoygt6} is sharp or not. It is important to mention that it has been recently proved in~\cite{savin2025density} that with an additional assumption on the quasiminimizer the condition in~\eqref{scbdgetoygt6} can be removed, see Remark~\ref{removiult}. The novelty in Theorem~\ref{viredghloprec45} is to prove that~\eqref{scbdgetoygt6} is not sharp. Indeed, we show that density estimates hold for minimizers of $\mathcal{E}_1^p$ for every $n\in\N$, $p\in (1,+\infty)$ and $m\in \left[p,+\infty\right)$. 

\begin{rem}\label{removiult}
We notice that in the recent paper~\cite{savin2025density}, it has been showed that the condition in~\eqref{scbdgetoygt6} can be removed, under an additional assumption on the quasiminimizer. 

More precisely, given any $n\in\N$, $p\in (1,+\infty)$ and $m\in \left[p,+\infty\right)$, they establish that for every $\epsilon>0$ there exists some $\rho_0>0$ such that density estimates hold for a quasiminimizers $u$ of~$\mathcal{E}_1^p$  as far as 
\begin{equation}\label{bevcgdhya66}
\mathcal{L}^n\left( \left\lbrace u\geq 0 \right\rbrace    \cap B_{\rho} \right)\geq \epsilon \rho^n
\end{equation}
for some $\rho\geq \rho_0$, see equation~(1.5) in Theorem~1.1 in~\cite{savin2025density}. 

We observe that the condition in~\eqref{bevcgdhya66} is much stronger than the one in~\eqref{xldvaxknterv543} of Theorem~\ref{viredghloprec45}.

As a matter of fact, on one hand,  in Theorem~1.1 in~\cite{savin2025density}  they show that once the structural parameters (such as $n$, $p$ and $m$) are fixed, there exists some $\rho_0$ such that if~\eqref{bevcgdhya66} is satisfied then density estimates hold. 

On the other hand, in Theorem~\ref{viredghloprec45}, we prove that independently from the choice of $c_0,r_0\in (0,+\infty)$ in~\eqref{xldvaxknterv543}, as far as $B _{r_0}\subset\Omega$, density estimates hold for every minimizer.
\end{rem}

\medskip

The rest of the paper is structured as follows. In Section~\ref{7hltgb5454546} we prove Theorems~\ref{th:fracp>=2},~\ref{CapVieABall} and Corollaries~\ref{bmrlsadcbl},~\ref{coro-09}. These results are a consequence of a preliminary estimate, Lemma~\ref{gliorigamieriopachi} in  Section~\ref{gi-i-gio}, which holds true for every  $\epsilon$-minimizer of $\mathcal{E}_s^p$ with $\epsilon\in (0,+\infty)$ and satisfying~\eqref{Fcon-812345}. The proofs of Theorems~\ref{th:fracp>=2} and~\ref{CapVieABall} are contained in Section~\ref{kovbreclo865r}. Section~\ref{hyg-yh-654} is devoted to the proofs of  Corollaries~\ref{bmrlsadcbl} and~\ref{coro-09}. The proofs of Theorems~\ref{12c-dgbnbc5543} and~\ref{viredghloprec45} are provided in Section~\ref{nmmdqcstngiga}. Finally, in Section~\ref{motifenzi} we prove Theorem~\ref{qteanldeeo}.

\section{Proof of Theorems~\ref{th:fracp>=2}~\&~\ref{CapVieABall} and Corollaries~\ref{bmrlsadcbl}~\&~\ref{coro-09}}\label{7hltgb5454546}

In this section we prove Theorems~\ref{th:fracp>=2} \&~\ref{CapVieABall} and Corollaries~\ref{bmrlsadcbl} \&~\ref{coro-09}. These proofs are based on a preliminary estimate, which holds true for any $\epsilon$-minimizer $u\in X^{s,p}(\Omega)$  satisfying equation~\eqref{Fcon-812345}. Such an estimate is the content of Lemma~\ref{gliorigamieriopachi} in Section~\ref{gi-i-gio} below. In Section~\ref{kovbreclo865r}, we use the inequality stated in Lemma~\ref{gliorigamieriopachi} to prove the main results, namely Theorem~\ref{th:fracp>=2} and Theorem~\ref{CapVieABall}. Finally, in Section~\ref{hyg-yh-654} we provide the proofs of Corollaries~\ref{bmrlsadcbl} and~\ref{coro-09}.      

\begin{rem}\label{plothgnebvdf} {\rm
We notice that in~\eqref{Fcon-812345} we can assume without loss of generality that $c_0\in (1,+\infty)$. More precisely, we show that if the results in either Theorem~\ref{th:fracp>=2} or Theorem~\ref{CapVieABall} hold true with $c_0\in (1,+\infty)$, then they hold true also with $c_0\in (0,1]$. 

Therefore, in what follows we assume that
\begin{equation}\label{kilimoner}\begin{split}&
{\mbox{Theorem~\ref{th:fracp>=2} and Theorem~\ref{CapVieABall} hold true}}\\&{\mbox{under the additional assumption that $c_0\in (1,+\infty)$.}} \end{split}
\end{equation}
Then, let $c_0\in (0,1]$
(and note that we are now not entitled in principle to use
Theorem~\ref{th:fracp>=2} and Theorem~\ref{CapVieABall} for this~$c_0$). Let also~$\tilde{c}_0\in (1,+\infty)$  and define $r:=\left(\frac{c_0}{\tilde{c}_0}\right)^\frac{1}{n}$. 

Also, if $f\in X^{s,p}(\Omega)$ we consider its rescaling $f_r\in X^{s,p}\left(\Omega_\frac{1}{r}\right)$ given by
\begin{equation*}
f_r(x):=f(r x).
\end{equation*}
Thus, let $u\in X^{s,p}(\Omega)$ satisfy~\eqref{Fcon-812345} and define $\tilde{r}_0:=\frac{r_0}{r}$.

In this way,
\begin{equation}\label{efvecdere}
\begin{split}
\mathcal{L}^n\left(B_{\tilde{r}_0} \cap \left\lbrace u_r>\theta_1 \right\rbrace  \right)&=\frac{1}{r^n} \mathcal{L}^n\left(B_{r_0}\cap \left\lbrace u>\theta_1 \right\rbrace\right)\\
&\geq \frac{c_0}{r^n}\\
&=\tilde{c}_0.
\end{split}
\end{equation} 
Also, for all $\epsilon\in (0,+\infty)$,
\begin{equation*}
\tilde{W}:=r^{sp}W,\quad \tilde{\epsilon}:=\epsilon r^{sp-n}\quad \mbox{and}\quad \tilde{\mathcal{E}}_s^p\left(\cdot,\Omega_\frac{1}{r}\right):=(1-s)\mathcal{K}_s^p\left(\cdot,\Omega_\frac{1}{r}\right)+\left\|\tilde{W}\left(\cdot\right)\right\|_{L^1\left(\Omega_\frac{1}{r}\right)}.
\end{equation*}
If~$u\in X^{s,p}(\Omega)$ is an $\epsilon$-minimizer, then, for every $w_r\in X_{u_r}^{s,p}\left(\Omega_\frac{1}{r}\right)$,
\begin{equation}\label{lower-case}
\begin{split}
\tilde{\mathcal{E}}_s^p(w_r,\Omega)+\tilde{\epsilon}&=r^{sp-n} \left(\mathcal{E}_s^p(w,\Omega)+\epsilon \right)\\
&\geq r^{sp-n}\mathcal{E}_s^p(u,\Omega)\\
&=\tilde{\mathcal{E}}_s^p\left(u_r,\Omega_\frac{1}{r}\right).
\end{split}
\end{equation} 
In particular, we obtain that $u_r$ is a $\tilde{\epsilon}$-minimizer for $\tilde{\mathcal{E}}_s^p\left(\cdot,\Omega_\frac{1}{r}\right)$. 

Then, as a consequence of~\eqref{kilimoner},~\eqref{efvecdere},~\eqref{lower-case} and Theorem~\ref{th:fracp>=2} we obtain that there exists~$\delta:=\delta_{s,n,m,p,c_1,\theta_*,\tilde{c}_0}\in (0,+\infty)$ such that if $\tilde{\epsilon}\in \left[0,\delta\right)$ then
there exist~$R^{*}:=R_{s,n,m,p,r^{sp}c_1,\theta_*,\tilde{r}_0}^{*}\in \left[\tilde{r}_0,+\infty\right)$, $\tilde{c}:=\tilde{c}_{s,n,p,m,r^{sp}\Lambda,r^{sp}c_1,\theta_*,\tilde{r}_0,\tilde{c}_0}\in (0,1)$ and $c:=c_{m,\theta_*}\in(0,+\infty)$ such that, for every $t\in \left[R^*,+\infty\right)$ satisfying $B_{3t}\subset \Omega_\frac{1}{r}$,
\begin{equation}\label{redi-opredf}
\begin{split}
\tilde{c}t^n &<\mathcal{L}^n(B_t\cap \left\lbrace u_r>\theta_2 \right\rbrace)+c_{m,\theta_*}\int_{B_t\cap \left\lbrace \theta_{*}< u_r\leq \theta^* \right\rbrace}\left|1+u_r(x)\right|^m\,dx\\
\end{split}
\end{equation} 
Now, we define 
\begin{equation*}
\delta^{(1)}:= r^{n-sp}\delta \quad\mbox{and}\quad  R^{(1)}:=r R^{*}
\end{equation*}
Hence, by scaling, we obtain from~\eqref{redi-opredf} that if $\epsilon\in \left[0,\delta^{(1)}\right)$, then, for every $t\in \left[R^{(1)},+\infty\right)$ such that $B_{3t}\subset \Omega$,
\begin{equation*}
\mathcal{L}^n\left( B_{t}\cap \left\lbrace u>\theta_2 \right\rbrace\right)+ c_{m,\theta_*}\int_{B_{t}\cap \left\lbrace \theta_{*}< u \leq \theta^* \right\rbrace}\left|1+u(x)\right|^m\,dx \geq \tilde{c} t^n. 
\end{equation*}
Therefore, we have showed that if Theorem~\ref{th:fracp>=2} holds true with $c_0\in (1,+\infty)$, then it holds true with $c_0\in (0,1]$.

Now, we  assume that $s_0\in \left(\frac{1}{p},1\right)$, $s\in\left[s_0,1\right)$ and also $u\in X^{s,p}(\Omega)\cap C_B^\alpha(\Omega)$. 

Then, as a consequence of~\eqref{kilimoner},~\eqref{efvecdere},~\eqref{lower-case} and Theorem~\ref{CapVieABall} we obtain that there exists $\delta:=\delta_{s_0,n,p,m,c_1,\theta_*,\tilde{c}_0,\alpha,[u]_{B,\alpha}}\in (0,+\infty)$ such that, if $\tilde{\epsilon}\in \left[0,\delta\right)$, then
there exist~$R^*:=R_{s_0,n,p,m,c_1,\theta_*,\tilde{r}_0}^*\in \left[\tilde{r}_0,+\infty\right)$, $\tilde{c}:=\tilde{c}_{s_0,n,p,m,\Lambda,c_1,\theta_*,\tilde{r}_0,\tilde{c}_0,\alpha,[u]_{B,\alpha}}\in (0,1)$ and $c:=c_{m,\theta_*}\in (0,+\infty)$ such that, for any $t\in \left[R^*,+\infty\right)$ satisfying $B_{4t}\subset \Omega_{\frac{1}{r}}$,
\begin{equation}\label{opredf-lorevgt}
\begin{split}
\tilde{c}t^n &<\mathcal{L}^n(B_t\cap \left\lbrace u_r>\theta_2 \right\rbrace)+c_{m,\theta_*}\int_{B_t\cap \left\lbrace \theta_{*}< u_r\leq \theta^* \right\rbrace}\left|1+u_r(x)\right|^m\,dx.
\end{split}
\end{equation} 
Now we define 
\begin{equation*}
\delta^{(2)}:=\delta \min_{s\in \left[s_0,1\right]}r^{n-sp}\quad\mbox{and}\quad R^{(2)}:=r R^*. 
\end{equation*}
Therefore, by scaling, we obtain from~\eqref{opredf-lorevgt} that if $\epsilon\in \left[0,\delta^{(2)}\right)$, then, for every $t\in \left[R^{(2)},+\infty\right)$ such that $B_{4t}\subset \Omega$,
\begin{equation*}
\mathcal{L}^n\left( B_{t}\cap \left\lbrace u>\theta_2 \right\rbrace\right)+ c_{m,\theta_*}\int_{B_{t}\cap \left\lbrace \theta_{*}< u \leq \theta^* \right\rbrace}\left|1+u(x)\right|^m\,dx \geq \tilde{c} t^n. 
\end{equation*}
Hence, we have showed that if Theorem~\ref{CapVieABall} holds true with $c_0\in (1,+\infty)$, then it holds true with $c_0\in (0,1]$.
}
\end{rem}

\subsection{A preliminary estimate}\label{gi-i-gio}
In this section we state and prove a preliminary estimate for $\epsilon$-minimizers $u\in X^{s,p}(\Omega)$. This result will be employed in the proof of both Theorems~\ref{th:fracp>=2} and~\ref{CapVieABall}. Before stating and commenting such a result, we first need to set some notation.  

First, given any $p\in (1,+\infty)$ and $s\in (0,1)$ we let
\begin{equation}\label{holeo}
\bar{p}:=\max\left\lbrace 2,p \right\rbrace,\quad l:=\min\left\lbrace 2,p \right\rbrace\quad\mbox{and}\quad \bar{s}:=\frac{sl}{2}.
\end{equation}
Moreover, for every $\tau\in (0,+\infty)$ we recall the quantities 
\begin{equation}\label{latuse-invefr43}
C_\tau:=C_{s,\tau,n,p,m}\quad\mbox{and}\quad \bar{R}_{\tau}:=\bar{R}_{s,\tau,n,p,m}
\end{equation} 
defined respectively in~(3.4) and~(3.1) of~\cite{DFVERPP}. Also, from now on we introduce the quantity 
\begin{equation}\label{muuuuuuu}
\mu:=\frac{c_1}{(1-s)p}
\end{equation} 
where $c_1\in (0,+\infty)$ is the structural constant in~\eqref{potntiald2}, and we choose the barrier $w\in C((-1,1],\R^n)$ as in Theorem~3.1 in~\cite{DFVERPP} with $\tau=\mu$. Furthermore, we define the constant 
\begin{equation*}
K_0:=\left[C_\mu\left(\frac{2}{1+\theta_*}\right)\right]^{\frac{m-1}{ps}}-1, 
\end{equation*} 
and we notice that, according to~(3.4) in~\cite{DFVERPP},
\begin{equation}\label{KpgrdiR}
K_0\geq C_\mu^\frac{m-1}{ps}\geq \max\left\lbrace 1, \bar{R}_\mu^{\frac{ps}{m-1}}\right\rbrace^\frac{m-1}{ps}\geq \bar{R}_\mu.
\end{equation}
For later convenience, we also observe that, thanks to~(3.5) in Theorem~3.1 of~\cite{DFVERPP}, for all~$R\in(K_0,+\infty)$, all~$K\in (K_0,R)$ and all $x\in B_{R-K}$,
\begin{equation}\label{ukulele}
w(x)\leq -1+C_\mu(K+1)^{-\frac{ps}{m-1}}<-1+\frac{1+\theta_*}{2}.
\end{equation} 
Now, given $u\in X^{s,p}(\Omega)$ and $R\in \left[\bar{R}_\mu,+\infty\right)$ such that $B_R\subset \Omega$, we define 
\begin{equation}\label{vdefini-tion}
v:=\min\left\lbrace u,w \right\rbrace.
\end{equation} 
We notice that, as a consequence of~(3.2) in~\cite{DFVERPP}, 
\begin{equation}\label{eq:boundcond}
v\equiv u \quad\mbox{in}\quad \R^n \setminus  B_R.
\end{equation}
Then, we define
\begin{equation}\label{adierre}
\begin{split}
a_R&:=\left\lbrace u-v\geq \frac{1+\theta_*}{4} \right\rbrace\quad\mbox{and}\quad b_R:= \left\lbrace  \frac{1+\theta_*}{8}<u-v<\frac{1+\theta_*}{4}\right\rbrace
\end{split}
\end{equation}
and, using~\eqref{ukulele} and~\eqref{adierre}, we conclude that, for every $R\in \left(K_0,+\infty\right)$ and $K\in (K_0,R)$,
\begin{equation}\label{takanaka}
a_R \supset B_{R-K}\cap \left\lbrace u>\theta_{*} \right\rbrace.
\end{equation}
Also, according to equation~\eqref{eq:boundcond}, the function~$u-v$ has compact support. In particular, the symmetric decreasing rearrangement of $u-v$, denoted by $(u-v)^*$, is well defined. Hence, we can define
\begin{equation}\label{astarebstar}
\begin{split}
a_R^*:= \left\lbrace (u-v)^*\geq \frac{1+\theta_*}{4} \right\rbrace, & \quad b_R^*:= \left\lbrace  \frac{1+\theta_*}{8}<(u-v)^*<\frac{1+\theta_*}{4}\right\rbrace\\
\mbox{and}\quad d_R^*&:=\R^n\setminus \left(a_R^*\cup b_R^*\right).
\end{split}
\end{equation}
Now we state and prove the following facts regarding $a_R^*$, $b_R^*$ and $d_R^*$. 

\begin{prop}\label{jamio-le}
Let $\theta_1,\theta_2\in (-1,\theta]$ and $\theta_*$, $\theta^*$ be defined as in~\eqref{liibfdcvre5555}. Moreover, we let $u\in X^{s,p}(\Omega)$ and $c_0,r_0\in (0,+\infty)$ such that $B_{r_0}\subset \Omega$ and~\eqref{Fcon-812345} is satisfied. 

Then, for every $R\in \left(K_0+r_0,+\infty\right)$,
\begin{equation}\label{iann-uytb}
\left|a_R^*\right|\geq c_0
\end{equation}
and there exist~$r_2\in (0,+\infty)$ and $r_1\in [0,+\infty)$ such that
\begin{equation}\label{levelsetpalle4563}
a_R^*=B_{r_2}, \quad b_R^*=B_{r_1+r_2}\setminus B_{r_2}\quad\mbox{and}\quad d_R^*=\R^n\setminus B_{r_2+r_1}.
\end{equation}
\end{prop}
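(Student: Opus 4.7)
The plan is to verify the two claims separately. For \eqref{iann-uytb} I will combine the density hypothesis \eqref{Fcon-812345} with the pointwise decay of the barrier $w$ recorded in \eqref{ukulele}--\eqref{takanaka}; for \eqref{levelsetpalle4563} I will invoke standard properties of the Schwarz symmetrization applied to the compactly supported function $u-v$.

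To prove \eqref{iann-uytb}, I would choose $K := R - r_0$ in \eqref{takanaka}. The hypothesis $R > K_0 + r_0$ forces $K \in (K_0, R)$, so this choice is admissible, and \eqref{takanaka} yields
\begin{equation*}
a_R \supset B_{R-K} \cap \{u > \theta_*\} = B_{r_0} \cap \{u > \theta_*\} \supset B_{r_0} \cap \{u > \theta_1\},
\end{equation*}
where the last inclusion uses $\theta_* \leq \theta_1$, which is built into the definition \eqref{liibfdcvre5555}. Applying \eqref{Fcon-812345} gives $|a_R| \geq c_0$, and the equimeasurability of the symmetric decreasing rearrangement then produces $|a_R^*| = |a_R| \geq c_0$.

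For \eqref{levelsetpalle4563}, I would start by noting that \eqref{eq:boundcond} makes $u - v$ compactly supported, so its super-level sets have finite measure and $(u-v)^*$ is well-defined. The defining property of Schwarz symmetrization says that for every $t > 0$ the set $\{(u-v)^* > t\}$ is an open ball centered at the origin (with the same measure as $\{u - v > t\}$), while $\{(u-v)^* \geq t\}$ is the corresponding closed ball. Thus there exist $r_2 \geq 0$ with $a_R^* = B_{r_2}$ and $r_1 \geq 0$ with $\{(u-v)^* > \tfrac{1+\theta_*}{8}\} = B_{r_1+r_2}$, both equalities modulo null sets; the inequality $r_1 \geq 0$ follows from the monotonicity of super-level sets, and the strict positivity $r_2 > 0$ is immediate from \eqref{iann-uytb} together with $c_0 > 0$. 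Finally,
\begin{equation*}
b_R^* = \{(u-v)^* > \tfrac{1+\theta_*}{8}\} \setminus \{(u-v)^* \geq \tfrac{1+\theta_*}{4}\} = B_{r_1+r_2} \setminus B_{r_2},
\end{equation*}
and by definition $d_R^* = \R^n \setminus (a_R^* \cup b_R^*) = \R^n \setminus B_{r_1+r_2}$.

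There is no substantive obstacle here: the proposition records two essentially elementary observations that follow from the construction of $v$ and the basic properties of Schwarz symmetrization. The only bookkeeping point is to choose $K$ so that the barrier estimate \eqref{takanaka} applies with $R - K = r_0$, which is precisely what the bound $R > K_0 + r_0$ guarantees.
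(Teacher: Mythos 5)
Your proof is correct and follows essentially the same route as the paper: both arguments obtain $|a_R^*|=|a_R|\geq c_0$ from~\eqref{takanaka} with a choice of $K$ making $B_{R-K}\supseteq B_{r_0}$ (you fix $K=R-r_0$, the paper takes any $K\in(K_0,R-r_0)$), combined with $\theta_*\leq\theta_1$, equimeasurability of the rearrangement, and the fact that super-level sets of $(u-v)^*$ are balls centered at the origin. Your treatment of the second claim is in fact slightly more careful than the paper's, which simply asserts the ball structure and reads off $r_2>0$ from~\eqref{iann-uytb}.
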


\begin{proof}
First, from~\eqref{Fcon-812345} and~\eqref{takanaka} we infer that, for every~$R\in\left(K_0+r_0,+\infty\right)$ and $K\in (K_0,R-r_0)$,
\begin{equation*}
\begin{split}
\left|a_R^*\right| & =\left|a_R\right|\\ 
&\geq \left|B_{R-K}\cap \left\lbrace u>\theta_{*} \right\rbrace\right|\\
&\geq \left|B_{R-K}\cap \left\lbrace u>\theta_1 \right\rbrace\right|\\
&\geq \left|B_{r_0}\cap \left\lbrace u>\theta_1 \right\rbrace\right|\\
&\geq c_0. 
\end{split}
\end{equation*}
This concludes the proof of~\eqref{iann-uytb}. 

Now we recall that the superlevel sets of the symmetric decreasing rearrangement of a function are balls in $\R^n$. Thus, there exist~$r_2,r_1\in [0,+\infty)$ such that 
\begin{equation*}
a_R^*=B_{r_2}, \quad b_R^*=B_{r_1+r_2}\setminus B_{r_2}\quad\mbox{and}\quad d_R^*=\R^n\setminus B_{r_2+r_1}.
\end{equation*}
Moreover, thanks to~\eqref{iann-uytb} we obtain that $r_2> \left(\omega_n c_0\right)^\frac{1}{n}$, which completes the proof of~\eqref{levelsetpalle4563}. 
\end{proof}

We now set
\begin{equation}\label{red-g-poi}
\hat{c}_p:=\begin{dcases}
2^{1-p}\quad &\mbox{if}\quad p\in \left[2,+\infty\right)\\
\frac{3p(p-1)}{4^{4-p}}\quad &\mbox{if}\quad p\in (1,2). 
\end{dcases}
\end{equation} 
Also, given any $K\in (0,+\infty)$, we define the following constants 
\begin{equation}\label{c2-34566}
\begin{split}
&c_2:=c_1\left(\frac{1+\theta_*}{8}\right)^k, \quad\quad c_3:=c_2(1+K)^p\\
&\bar{c}_4:= \frac{1}{2} \left(\frac{1+\theta_*}{8}\right)^{\bar{p}}\quad\mbox{and}\quad c_5:=C_{\mu}^{m-1} \frac{2c_1+\Lambda}{\hat{c}_p}+\frac{c_3}{\hat{c}_p},  
\end{split}
\end{equation}
where $k$ has been introduced in~\eqref{key?}. 

Moreover, for every $t\in (0,+\infty)$ such that $B_t\subset \Omega$, we let 
\begin{equation}\label{rfdlopcbgft5}
V(t):=\mathcal{L}^n(B_t\cap \left\lbrace u>\theta_* \right\rbrace). 
\end{equation}
Finally, given any measurable subsets $A,D\subset \R^n$ we set 
\begin{equation}\label{ellead}
L(A,D):=\int_{A}\int_{D}\frac{dx\,dy}{\left|x-y\right|^{n+sp}}. 
\end{equation}

With this, we are ready to state the main result of this section: 
\begin{lem}\label{gliorigamieriopachi}
Let $\epsilon\in \left[0,+\infty\right)$, $\theta_1,\theta_2\in (-1,\theta]$ and $\theta_*,\theta^*$ be defined as in~\eqref{liibfdcvre5555}. Also, let $u\in X^{s,p}(\Omega)$ be an $\epsilon$-minimizer for $\mathcal{E}_s^p$ and $c_0,r_0\in (0,+\infty)$ be such that $B_{r_0}\subset \Omega$ and~\eqref{Fcon-812345} is satisfied.  

Then, for every $R\in \left(K_0,+\infty\right)$ such that $B_R\subset\Omega$ and $K\in (K_0,R)$,
\begin{equation}\label{iattoll-90843}
\begin{split}
&(1-s)\bar{c}_4 L(a_R^*,d_R^*) +\frac{c_2}{\hat{c}_p}\left|b_R^*\right| \leq \frac{\epsilon }{\hat{c}_p}+c_5 \int_{0}^R \left(R-t+1\right)^{-sp}V'(t)\,dt.    
\end{split}
\end{equation}
\end{lem}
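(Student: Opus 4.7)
The plan is to test the $\epsilon$-minimality of $u$ against the competitor $v=\min\{u,w\}$ built from the barrier $w$ of Theorem~3.1 in~\cite{DFVERPP} (with $\tau=\mu$ as in~\eqref{muuuuuuu}). By~\eqref{eq:boundcond}, $v\in X_u^{s,p}(\Omega)$, so the $\epsilon$-minimality of $u$ gives
\[
(1-s)\bigl[\mathcal{K}_s^p(u,\Omega)-\mathcal{K}_s^p(v,\Omega)\bigr]\;+\;\int_{B_R}\bigl[W(u)-W(v)\bigr]\,dx\;\le\;\epsilon.
\]
The aim is to bound each left-hand term from below, producing the left-hand side of~\eqref{iattoll-90843} together with a residual error that is then recast as the layer-cake integral.

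For the kinetic term I would use the Taylor-type algebraic inequality
\[
|b|^p-|a|^p\;\ge\;\hat{c}_p\,|b-a|^{\bar p}+p|a|^{p-2}a\,(b-a),
\]
applied pointwise with $a=v(x)-v(y)$, $b=u(x)-u(y)$, and $b-a=\phi(x)-\phi(y)$, where $\phi:=u-v=(u-w)_+\ge 0$. This is the standard convexity inequality for $p\ge 2$ and a second-order Taylor expansion (exploiting $|a|+|b|\le 4$ since $\|u\|_{L^\infty},\|v\|_{L^\infty}\le 1$) for $p\in(1,2)$, explaining both cases of $\hat{c}_p$ in~\eqref{red-g-poi} and the appearance of $\bar p=\max\{2,p\}$. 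Integrating against $|x-y|^{-n-sp}$ splits the kinetic gain into the ``distance'' piece $\hat{c}_p\iint|\phi(x)-\phi(y)|^{\bar p}/|x-y|^{n+sp}\,dx\,dy$ and a linear cross term $\mathcal{C}$. The Riesz rearrangement inequality (applicable because $t\mapsto t^{\bar p}$ is convex and the kernel is radially decreasing) replaces $\phi$ by its symmetric decreasing rearrangement $\phi^*$; restricting the double integral to $a_R^*\times d_R^*$ and using $\phi^*\ge (1+\theta_*)/4$ on $a_R^*$ and $\phi^*\le (1+\theta_*)/8$ on $d_R^*$ yields the lower bound $2\bar{c}_4\,L(a_R^*,d_R^*)$ by the definition of $\bar{c}_4$ in~\eqref{c2-34566}.

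For the potential, on the support of $\phi$ we have $v=w\le -1+(1+\theta_*)/2\le\theta_*$ by~\eqref{ukulele}, so hypothesis~\eqref{potntiald2} applies with $r=v$ (truncating $u$ at $-1+q$ if necessary) and gives $W(u)-W(v)\ge c_1(u-v)|1+v|^{m-1}+c_1(u-v)^k$. Dropping the nonnegative first term and using $u-v>(1+\theta_*)/8$ on $b_R$ produces $\int[W(u)-W(v)]\,dx\ge c_2|b_R^*|$. Plugging the two bounds into the $\epsilon$-minimality inequality, dividing by $\hat{c}_p$, and noting that $2\bar{c}_4\ge\bar{c}_4$ reduces the claim~\eqref{iattoll-90843} to
\[
\frac{(1-s)\,|\mathcal{C}|}{\hat{c}_p}\;\le\;c_5\int_0^R(R-t+1)^{-sp}V'(t)\,dt.
\]

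\textbf{Main obstacle: controlling the cross term $\mathcal{C}$.} Since $\phi$ is supported inside $\{u>w\}\subset\{u>\theta_*\}\cap B_R$ (by~\eqref{ukulele}), the cross term is a nonlocal first-variation-type expression for $w$ tested against~$\phi$. The barrier property of Theorem~3.1 in~\cite{DFVERPP} (the choice $\tau=\mu=c_1/((1-s)p)$ is precisely what aligns the barrier estimate with the coefficient $c_1$ of~\eqref{potntiald2}), combined with the pointwise decay $|1+w(x)|^{m-1}\le C_\mu^{m-1}(R-|x|+1)^{-sp}$ extracted from~\eqref{ukulele}, yields, after absorbing the potential weight $\Lambda|1+v|^{m-1}$ into the same envelope,
\[
\frac{(1-s)\,|\mathcal{C}|}{\hat{c}_p}\;\le\;\Bigl[\frac{C_\mu^{m-1}(2c_1+\Lambda)}{\hat{c}_p}+\frac{c_3}{\hat{c}_p}\Bigr]\int_{B_R}\chi_{\{u>\theta_*\}}(x)\,(R-|x|+1)^{-sp}\,dx,
\]
where the factor $(1+K)^p$ hidden in $c_3$ accounts for the boundary layer of width $K$ around $\partial B_R$ in which the refined decay~\eqref{ukulele} of $w$ is unavailable and $\phi$ is only controlled by~$2$. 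The square bracket equals $c_5$ by~\eqref{c2-34566}, and the co-area identity $\int_{B_R}\chi_{\{u>\theta_*\}}(x)(R-|x|+1)^{-sp}\,dx=\int_0^R(R-t+1)^{-sp}V'(t)\,dt$ delivers the desired right-hand side of~\eqref{iattoll-90843}. The delicate points are the sharp calibration of constants in the barrier estimate close to $\partial B_R$ and the careful handling of the second-order algebraic inequality when $p\in(1,2)$; the rest is a direct reorganization of the definitions in~\eqref{c2-34566}--\eqref{ellead}.
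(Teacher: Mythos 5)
Your proposal follows essentially the same route as the paper: the competitor $\min\{u,w\}$ with the barrier of Theorem~3.1 in~\cite{DFVERPP}, the convexity/Taylor inequality isolating $\hat c_p\,\mathcal{K}_{\bar s}^{\bar p}(u-v)$ plus a first-variation cross term controlled by the barrier's supersolution property and the decay $|1+w|^{m-1}\lesssim C_\mu^{m-1}(R-|x|+1)^{-sp}$, the potential gain from~\eqref{potntiald2} giving the $c_2|b_R^*|$ term, the Riesz/Almgren--Lieb rearrangement inequality to produce $L(a_R^*,d_R^*)$, and the coarea formula for the $V'$ integral. The one bookkeeping imprecision is that $\int_{B_R}[W(u)-W(v)]\,dx\ge c_2|b_R^*|$ is not literally true: the paper subtracts the portion $b_R\cap\{u>\theta_*\}\subset B_R\setminus B_{R-K}$, whose measure generates the $c_3=(1+K)^p c_2$ correction, from the potential estimate rather than from the cross term as you do, but since you budget exactly this correction into $c_5$ the final inequality is identical.
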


As aforementioned at the beginning of Section~\ref{7hltgb5454546}, Lemma~\ref{gliorigamieriopachi} is used to prove both Theorem~\ref{th:fracp>=2} and~\ref{CapVieABall}. Indeed, we observe that any $\epsilon$-minimizer satisfying the hypothesis of either Theorem~\ref{th:fracp>=2} or~\ref{CapVieABall} satisfies automatically all the conditions required to apply Lemma~\ref{gliorigamieriopachi}. 

 \begin{proof}[Proof of Lemma~\ref{gliorigamieriopachi}]
For every $R\in \left[\bar{R}_\mu,+\infty\right)$ such that $B_R\subset \Omega$, we set
\begin{equation}\label{akkappaerre}
A_k(R):=c_1\int_{B_R\cap \left\lbrace w<u\leq \theta_* \right\rbrace}(u-w)^k\,dx.
\end{equation}
We claim that, for every $R\in \left[\bar{R}_\mu,+\infty\right)$ satisfying $B_R\subset\Omega$,
\begin{equation}\label{Ineq.ecm}
\hat{c}_p(1-s) \mathcal{K}_{\bar{s}}^{\bar{p}}(u-v, B_R) \leq \epsilon + C_\mu^{m-1}(2c_1+\Lambda)\int_{B_R\cap \left\lbrace u> \theta_* \right\rbrace } \left(R-\left|x\right|+1\right)^{-sp} \,dx-A_k(R).
\end{equation}

In order to show~\eqref{Ineq.ecm}, we recall equation~(3.8) in~\cite{DFVERPP}, according to which, for every $R\in \left[\bar{R}_\mu,+\infty\right)$ such that $B_R\subset \Omega$,
\begin{equation*}
\begin{split}
(1-s)\hat{c}_p \mathcal{K}_{\bar{s}}^{\bar{p}}(u-v, B_R)&\\
\leq (1-s)\mathcal{K}_s^p &(u,B_R)-(1-s)\mathcal{K}_{s}^p(v,B_R)-(1-s)p\int_{B_R\cap \left\lbrace u>w\right\rbrace}(u(x)-v(x))(-\Delta)_p^s w(x)\,dx.
\end{split}
\end{equation*}
From this, we deduce that
\begin{equation*}
\begin{split}&\!\!\!\!\!\!\!\!\!\!\!\!
(1-s)\hat{c}_p \mathcal{K}_{\bar{s}}^{\bar{p}}(u-v, B_R) \\ \leq \,&(1-s) \mathcal{K}_s^p(u,B_R)-(1-s)\mathcal{K}_{s}^p(v,B_R) +\int_{B_R} W(u)-W(v)\,dx+\int_{B_R} W(v)-W(u)\,dx\\
&\qquad  -(1-s)p\int_{B_R\cap \left\lbrace u>w\right\rbrace}(u(x)-v(x))(-\Delta)_p^s w(x)\,dx\\
= \,&\mathcal{E}_s^p(u,B_R)-\mathcal{E}_s^p(v,B_R)+\int_{B_R} W(v)-W(u)\,dx\\
& \qquad -(1-s)p\int_{B_R\cap \left\lbrace u>w\right\rbrace}(u(x)-v(x))(-\Delta)_p^s w(x)\,dx.
\end{split}
\end{equation*}

Consequently, recalling the $\epsilon$-minimality of $u$ and~\eqref{eq:boundcond} we gather that 
\begin{equation}\label{kerat.9034}
\begin{split}&\!\!\!\!\!\!
(1-s)\hat{c}_p \mathcal{K}_{\bar{s}}^{\bar{p}}(u-v, B_R) \\ &\leq \epsilon+\int_{B_R} W(v)-W(u)\,dx-(1-s)p\int_{B_R\cap \left\lbrace u>w\right\rbrace}(u(x)-v(x))(-\Delta)_p^s w(x)\,dx\\
&=\epsilon+\int_{B_R \cap \left\lbrace u>w \right\rbrace} W(w)-W(u)\,dx-(1-s)p\int_{B_R\cap \left\lbrace u>w\right\rbrace}(u(x)-v(x))(-\Delta)_p^s w(x)\,dx.
\end{split}
\end{equation}
Now, using~\eqref{potential} and~\eqref{potntiald2} we obtain that, for $k\in\N$ as in~\eqref{key?},
\begin{equation}\label{gdfwe}
\begin{split}
& \int_{B_R\cap \left\lbrace u>w \right\rbrace} W(w)-W(u)\,dx\\
=& \int_{B_R\cap \left\lbrace w<u\leq \theta_* \right\rbrace} W(w)-W(u)\,dx+\int_{B_R\cap \left\lbrace u>\max\left\lbrace w,\theta_*\right\rbrace \right\rbrace} W(w)-W(u)\,dx\\
\leq & -c_1\int_{B_R\cap \left\lbrace w<u\leq \theta_* \right\rbrace}\left|1+w\right|^{m-1}(u-w)\,dx -c_1\int_{B_R\cap \left\lbrace w<u\leq \theta_* \right\rbrace}(u-w)^k\,dx \\  
&\qquad+ \Lambda \int_{B_R\cap \left\lbrace u>\max\left\lbrace \theta_*,w \right\rbrace   \right\rbrace} \left|1+w\right|^m \,dx.
\end{split}
\end{equation}
Also, we infer from~\eqref{kerat.9034} and~\eqref{gdfwe} that, for all $R\in\left[\bar{R}_\mu,+\infty \right)$ such that $B_R\subset \Omega$,
\begin{equation*}
\begin{split}
&\hat{c}_p(1-s)\mathcal{K}_{\bar{s}}^{\bar{p}}(u-v, B_R)\\ 
\leq & \epsilon-c_1\int_{B_R\cap \left\lbrace w<u\leq \theta_* \right\rbrace}\left|1+w\right|^{m-1}(u-w)\,dx+ \Lambda \int_{B_R\cap \left\lbrace u>\max\left\lbrace \theta_*,w \right\rbrace   \right\rbrace} \left|1+w\right|^{m-1} \,dx\\
&-(1-s)p\int_{B_R\cap \left\lbrace u>w\right\rbrace}(u(x)-v(x))(-\Delta)_p^s w(x)\,dx-c_1\int_{B_R\cap \left\lbrace w<u\leq \theta_* \right\rbrace}(u-w)^k\,dx.
\end{split}
\end{equation*}
Making use of this inequality,~\eqref{akkappaerre} here and~(3.3) in~\cite{DFVERPP}, we obtain that
\begin{equation*}
\begin{split}
&\hat{c}_p(1-s)\mathcal{K}_{\bar{s}}^{\bar{p}}(u-v,B_R)\\
\leq &\epsilon -c_1\int_{B_R\cap \left\lbrace w<u\leq \theta_* \right\rbrace}\left|1+w\right|^{m-1}(u-w)\,dx+ \Lambda \int_{B_R\cap \left\lbrace u>\max\left\lbrace \theta_*,w \right\rbrace   \right\rbrace} \left|1+w\right|^{m-1} \,dx\\
&+c_1\int_{B_R\cap \left\lbrace w<u\leq \theta_*\right\rbrace}(u(x)-v(x))\left|1+w\right|^{m-1}\,dx+c_1\int_{B_R\cap \left\lbrace u>\max\left\lbrace \theta_*,w \right\rbrace   \right\rbrace}(u(x)-v(x))\left|1+w\right|^{m-1}\,dx\\
&-c_1\int_{B_R\cap \left\lbrace w<u\leq \theta_* \right\rbrace}(u-w)^k\,dx\\
\leq & \epsilon+\left(2c_1+\Lambda\right)\int_{B_R\cap \left\lbrace u>\max\left\lbrace \theta_*,w \right\rbrace   \right\rbrace} \left|1+w\right|^{m-1} \,dx-c_1\int_{B_R\cap \left\lbrace w<u\leq \theta_* \right\rbrace}(u-w)^k\,dx\\
= &\epsilon+\left(2c_1+\Lambda\right)\int_{B_R\cap \left\lbrace u>\max\left\lbrace \theta_*,w \right\rbrace   \right\rbrace} \left|1+w\right|^{m-1} \,dx-A_k(R).
\end{split}
\end{equation*}
Finally, thanks to~(3.5) in~\cite{DFVERPP}, 
\begin{equation*}
\hat{c}_p(1-s)\mathcal{K}_{\bar{s}}^{\bar{p}}(u-v,B_R) \leq \epsilon+ C_\mu^{m-1}(2c_1+\Lambda)\int_{B_R\cap \left\lbrace u> \theta_*  \right\rbrace} \left(R-\left|x\right|+1\right)^{-sp} \,dx-A_k(R).
\end{equation*}
This concludes the proof of claim~\eqref{Ineq.ecm}. 

From~\eqref{Ineq.ecm} and the coarea formula, we infer that, for every $R\in \left[\bar{R}_\mu,+\infty\right)$ such that $B_R\subset\Omega$,
\begin{equation}\label{Slimshady}
\begin{split}
\frac{A_k(R)}{\hat{c}_p} &+(1-s) \mathcal{K}_{\bar{s}}^{\bar{p}}(u-v,B_R) \\
&\leq \frac{\epsilon}{\hat{c}_p}+ C_\mu^{m-1}\frac{2c_1+\Lambda}{\hat{c}_p}\int_{0}^R \left(R-\left|x\right|+1\right)^{-sp}\int_{\partial B_t} \chi_{\left\lbrace u>\theta_* \right\rbrace}(x)\,dH_x^{n-1}\,dt\\
&=\frac{\epsilon}{\hat{c}_p}+C_\mu^{m-1}\frac{2c_1+\Lambda}{\hat{c}_p}\int_{0}^R \left(R-\left|x\right|+1\right)^{-sp}V'(t)\,dt.
\end{split}
\end{equation} 
Also, from~(3.1) in~\cite{DFVERPP} and~\eqref{liibfdcvre5555} here,
\begin{equation}\label{suK0tntddr}
K_0\geq \left[2^{\frac{p}{m-1}+1}\right]^\frac{m-1}{ps}-1\geq 2^\frac{1}{s}-1\geq 1.
\end{equation}

We now claim that, for every $R\in(K_0,+\infty)$ and $K\in (K_0,R)$,
\begin{equation}\label{Stimistopot}
A_k(R) \geq c_2\left|b_R^{*} \right|-c_3\int_{0}^R(R+1-t)^{-ps}V'(t)\,dt,
\end{equation}
and, for every $R\in \left[\bar{R}_\mu,+\infty\right)$,
\begin{equation}\label{Stimistocine}
\begin{split}
2\mathcal{K}_{\bar{s}}^{\bar{p}}\left(u-v,B_R\right) \geq \left(\frac{1+\theta_*}{8}\right)^{\bar{p}}L(a_R^*,d_R^*).
\end{split}
\end{equation}
We observe that if claims~\eqref{Stimistopot} and~\eqref{Stimistocine} hold true, then  from~\eqref{Stimistopot},~\eqref{Stimistocine},~\eqref{Slimshady} and~\eqref{KpgrdiR} it follows that, for every $R\in \left(K_0,+\infty\right)$ such that $B_R\subset\Omega$ and $K\in (K_0,R)$,
\begin{equation*}
\begin{split}
&\frac{(1-s)}{2}\left(\frac{1+\theta_*}{8}\right)^{\bar{p}} L(a_R^*,d_R^*) +\frac{c_2}{\hat{c}_p}\left|b_R^*\right|-\frac{c_3}{\hat{c}_p}\int_{0}^R (R+1-t)^{-sp} V'(t)\,dt\\ 
\leq & (1-s) \mathcal{K}_{\bar{s}}^{\bar{p}}(u-v,B_R) +\frac{A_k(R)}{\hat{c}_p}\\
\leq &\frac{\epsilon}{\hat{c}_p}+C_\mu^{m-1} \frac{2c_1+\Lambda}{\hat{c}_p} \int_{0}^R \left(R-t+1\right)^{-sp}V'(t)\,dt,    
\end{split}
\end{equation*}
which shows~\eqref{iattoll-90843}. Hence, in order to conclude the proof of Lemma~\ref{gliorigamieriopachi} it is only left to show claims~\eqref{Stimistopot} and~\eqref{Stimistocine}. 

To prove~\eqref{Stimistopot} we first observe that
\begin{equation*}
b_R\cap\left\lbrace u\leq \theta_* \right\rbrace \subset B_R\cap \left\lbrace w<u\leq \theta_*\right\rbrace
\end{equation*}
and therefore
\begin{equation}\label{hntecopeeewd}
\begin{split}
A_{k}(R)=c_1\int_{B_R\cap \left\lbrace w<u\leq \theta_*\right\rbrace} (u-w)^k\,dx &\geq c_1\int_{b_R\cap \left\lbrace u\leq \theta_* \right\rbrace}(u-w)^k\,dx\\
&\geq c_1\left(\frac{1+\theta_*}{8}\right)^k\left|b_R\cap \left\lbrace u\leq \theta_*\right\rbrace\right|. 
\end{split}
\end{equation}

In addition, we  notice that 
\begin{equation}\label{DSP-YHNRE45}
\begin{split}
V(R)-V(R-K)&=\int_{R-K}^R V'(t)\,dt\\
&\leq (1+K)^{ps}\int_{R-K}^R \left(R+1-t\right)^{-ps}V'(t)\,dt\\
&\leq (1+K)^{ps}\int_{0}^R \left(R+1-t\right)^{-ps}V'(t)\,dt.
\end{split}
\end{equation} 
Also, from~\eqref{ukulele} we evince that
\begin{equation}\label{glio-7hvcdr4}
b_{R}\cap B_{R-K} \subset B_{R-K}\cap \left\lbrace u\leq \theta_* \right\rbrace.
\end{equation}
From~\eqref{DSP-YHNRE45} and~\eqref{glio-7hvcdr4} it follows that 
\begin{equation}\label{xucvedlgbvcr51}
\begin{split}
\left|b_R\right|& = \left|b_R\cap \left\lbrace u\leq \theta_* \right\rbrace \right|+\left|b_R\cap \left\lbrace u>\theta_*\right\rbrace\right|\\
&=\left|b_R\cap \left\lbrace u\leq \theta_* \right\rbrace \right|+\left|\left(b_R\setminus  B_{R-K}\right)  \cap \left\lbrace u>\theta_*\right\rbrace\right|\\
&\leq \left|b_R\cap \left\lbrace u\leq \theta_* \right\rbrace \right|+\left|\left(B_R\setminus B_{R-K}\right)  \cap \left\lbrace u>\theta_*\right\rbrace\right|\\
&= \left|b_R\cap \left\lbrace u\leq \theta_* \right\rbrace \right|+ V(R)-V(R-K)\\
&\leq \left|b_R\cap \left\lbrace u\leq \theta_* \right\rbrace \right|+ (1+K)^{ps}\int_{0}^R \left(R+1-t\right)^{-ps}V'(t)\,dt.
\end{split}
\end{equation}
Moreover, from~\eqref{hntecopeeewd} and~\eqref{xucvedlgbvcr51} we deduce that 
\begin{equation}\label{perthgtbgf}
\begin{split}
A_k(R)&\geq c_1\left(\frac{1+\theta_*}{8}\right)^k \left( \left|b_R\right|-(1+K)^{ps}\int_{0}^R(R+1-t)^{-ps}V'(t)\,dt\right)\\ 
&=c_2\left|b_R \right|-c_3\int_{0}^R(R+1-t)^{-ps}V'(t)\,dt\\
&=c_2\left|b_R^*\right|-c_3\int_{0}^R (R+1-t)^{-ps}V'(t)\,dt. 
\end{split}
\end{equation}
This concludes the proof of claim~\eqref{Stimistopot}. 

It is only left to prove~\eqref{Stimistocine}. To show this, we observe that if $x\in a_R^*$ and $y\in d_R^*$ 
\begin{equation}\label{stacca-09}
\begin{split}
\left|(u-v)^*(x)-(u-v)^*(y)\right| &\geq (u-v)^*(x)-(u-v)^*(y)\\
&\geq \frac{1+\theta_*}{4}-\frac{1+\theta_*}{8}\\
&=\frac{1+\theta_*}{8}. 
\end{split}
\end{equation}
Then, making use of Theorem~9.2 in~\cite{MR1002633} and~\eqref{stacca-09} here, we obtain that
\begin{equation*}\label{EsOnThKiTe}
\begin{split}
2\mathcal{K}_{\bar{s}}^{\bar{p}}\left(u-v,B_R\right) & =\int_{\R^n}\int_{\R^n} \frac{\left|(u-v)(x)-(u-v)(y)\right|^{\bar{p}}}{\left|x-y\right|^{n+\bar{s}\bar{p}}}\,dy\,dx\\
& \geq \int_{\R^n}\int_{\R^n} \frac{\left|(u-v)^*(x)-(u-v)^*(y)\right|^{\bar{p}}}{\left|x-y\right|^{n+\bar{s}\bar{p}}}\,dy\,dx\\
&\geq \left(\frac{1+\theta_*}{8}\right)^{\bar{p}}\int_{a_R^*}\int_{d_R^*} \frac{dy\,dx}{\left|x-y\right|^{n+sp}}\\ 
&=\left(\frac{1+\theta_*}{8}\right)^{\bar{p}}L(a_R^*,d_R^*). 
\end{split}
\end{equation*}
This concludes the proof of~\eqref{Stimistocine}. 
\end{proof}

\subsection{Proof of Theorem~\ref{th:fracp>=2} \&~\ref{CapVieABall}}\label{kovbreclo865r}
In this section we prove Theorems~\ref{th:fracp>=2} and~\ref{CapVieABall}.

The first step  is to suitably estimate from below the left-hand side of~\eqref{iattoll-90843}. To do so, we need two different approaches for Theorems~\ref{th:fracp>=2} and~\ref{CapVieABall}, due to the presence of the multiplicative term $(1-s)$ in front of $L(a_R^*,d_R^*)$. 

Indeed, when we show Theorem~\ref{th:fracp>=2}, we can treat $(1-s)$ as any constant, being the aim of the result to show density estimates at any fixed $s\in (0,1)$. 

However, in Theorem~\ref{CapVieABall}, we want to obtain density estimates whose structural constants do not depend on $s\in [s_0,1)$ where $s_0\in \left(\frac{1}{p},1\right)$. To achieve this, when estimating the left-hand side of~\eqref{iattoll-90843}, we need to consider the fact that $(1-s)$ vanishes as $s\to 1$. As a consequence of this, it is not possible to utilize the same result that we use to prove Theorem~\ref{th:fracp>=2} also to obtain a lower bound for the left-hand side of~\eqref{iattoll-90843}.  

We begin by proving Theorem~\ref{th:fracp>=2}. The result employed to estimate the left-hand side of~\eqref{iattoll-90843} is the following: 

\begin{prop}\label{keleetele}
Let $s\in(0,1)$, $p\in(1,+\infty)$ and $r_1,r_2\in [0,+\infty)$ such that  $\left|B_{r_2}\right|\geq \kappa$ for some $\kappa\in (0,+\infty)$. Also, let 
\begin{equation}\label{thedefofl}
l(B_{r_2}):=\begin{dcases}
\left|B_{r_2}\right|^{\frac{1-sp}{n}}\quad &\mbox{if}\quad s\in \left(0,\frac{1}{p}\right)\\
\left|\ln \left(\left|B_{r_2}\right|\right)\right|\quad &\mbox{if}\quad  s=\frac{1}{p}\\
1\quad &\mbox{if}\quad s\in \left(\frac{1}{p},1\right).
\end{dcases}
\end{equation}
Then, there exists~$\hat{c}:=\hat{c}_{\kappa,n,p}\in (0,+\infty)$  such  that  
\begin{equation}\label{Lrvcbopref}
L(B_{r_2},\R^n\setminus B_{r_2+r_1})+\left|B_{r_2+r_1}\setminus B_{r_2}\right|\geq \hat{c}\left|B_{r_2}\right|^{\frac{n-1}{n}}l(B_{r_2}).
\end{equation} 
\end{prop}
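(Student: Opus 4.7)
The plan is to split on the relative sizes of $r_1$ and $r_2$, handling $r_1 \ge r_2$ via the measure term and $r_1 < r_2$ via a scaling reduction of the nonlocal term.

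In the case $r_1 \ge r_2$, the elementary estimate $(r_1+r_2)^n - r_2^n \ge (2^n-1)r_2^n$ gives $|B_{r_2+r_1}\setminus B_{r_2}| \ge (2^n-1)|B_{r_2}| \ge (2^n-1)\kappa^{1/n}|B_{r_2}|^{(n-1)/n}$. It then suffices to check that the factor $|B_{r_2}|^{1/n}/l(B_{r_2})$ is bounded below on $\{V\ge \kappa\}$ by a constant depending only on $\kappa, n, p$; this is immediate for $sp>1$ (where $l\equiv 1$) and routine in the other cases, reducing for $sp<1$ to $V^{sp/n}\ge \min\{1,\kappa^{1/n}\}$, uniform in $sp\in(0,1)$, and for $sp=1$ to the continuity of $V^{1/n}/|\ln V|$ on $[\kappa,\infty)$ together with its growth at infinity.

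In the case $r_1<r_2$, I would scale to unit radius:
\[
L\bigl(B_{r_2},\R^n\setminus B_{r_2+r_1}\bigr)=r_2^{n-sp}F(r_1/r_2),\qquad F(t):=L(B_1,\R^n\setminus B_{1+t}),
\]
and then estimate $F(t)$ from below. Fixing $x\in B_1$, setting $d:=1+t-|x|>0$, and restricting the $y$-integral to the tube
\[
T_x:=\bigl\{x+\sigma\,x/|x|+z:\sigma\in(d,2d),\;z\perp x,\;|z|\le\sigma/2\bigr\},
\]
a direct check yields $T_x\subset\R^n\setminus B_{1+t}$ and $|x-y|\le (\sqrt 5/2)\sigma$ on $T_x$, leading to
\[
\int_{\R^n\setminus B_{1+t}}|x-y|^{-(n+sp)}\,dy\ge c_{n,sp}\,d^{-sp},
\]
with $c_{n,sp}$ explicit in $n$ and $sp$. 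Integrating over $x\in B_1$ in polar coordinates then produces the asymptotics $F(t)\gtrsim_{n,sp}1$ if $sp<1$, $F(t)\gtrsim_{n}\ln(1/t)$ if $sp=1$, and $F(t)\gtrsim_{n,sp}t^{1-sp}$ if $sp>1$, matching precisely the three branches of $l(B_{r_2})$ after multiplication by the scaling factor $r_2^{n-sp}$.

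These bounds close each sub-case after comparison with the right-hand side $\hat c\,|B_{r_2}|^{(n-1)/n}l(B_{r_2})$: for $sp<1$, $L\gtrsim r_2^{n-sp}$ matches directly; for $sp>1$ and $r_1\le 1$, $L\gtrsim r_2^{n-1}r_1^{1-sp}\ge r_2^{n-1}$, while for $r_1\in[1,r_2]$ the measure term $|B_{r_2+r_1}\setminus B_{r_2}|\ge n\omega_n r_2^{n-1}r_1\ge n\omega_n r_2^{n-1}$ suffices; the $sp=1$ case is handled analogously, with the split $r_1\lessgtr\sqrt{r_2}$ (rather than $r_1\lessgtr 1$) to balance $L\gtrsim r_2^{n-1}\ln(r_2/r_1)$ against the measure term $\ge n\omega_n r_2^{n-1/2}$. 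The main obstacle I anticipate is threading the constants near $sp=1$, since the factors $1/(1-sp)$ and $1/(sp-1)$ in the $F$-bounds blow up there; this is circumvented by choosing the split between the two dominant terms so that these singular factors never multiply non-vanishing quantities, and by exploiting $sp\in(0,p)$ to absorb the residual $s$-dependence into constants depending only on $\kappa,n,p$.
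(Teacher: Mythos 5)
Your proposal is correct and follows essentially the same route as the paper: a case split on the size of $r_1$ relative to $r_2$, with the annulus measure handling the large-$r_1$ regime and a cone/tube lower bound for $\int_{\R^n\setminus B_{R+r}}|x-y|^{-n-sp}\,dy$ (the paper's Proposition~\ref{AmGiDiMe} and Theorem~\ref{SymverofTh}) handling the small-$r_1$ regime, followed by a balancing of the two terms. The only cosmetic difference is that you balance via explicit sub-splits on $r_1$ (at $1$, resp.\ $\sqrt{r_2}$), while the paper minimizes $t\mapsto t^{1-sp}+t$ and $t\mapsto\ln(\alpha/t)+t$ directly; both yield the required $s$-uniform constant.
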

Proposition~\ref{keleetele} is a direct consequence of the following result:  
\begin{thm}\label{SymverofTh}
Let $s\in (0,1)$, $p\in (1,+\infty)$, $\delta:=\min\left\lbrace 2^{-n}, 2^{-n+2-p} \right\rbrace$, $R\in \left(0,+\infty\right)$ and $r\in \left(0,\delta R\right)$. Then, 
\begin{equation}\label{Sonounoskianto}
L(B_R,\R^n\setminus B_{R+r})\geq \begin{dcases} 
\delta R^{n-sp}\quad &\mbox{if}\quad
s\in \left(0,\frac{1}{p}\right)\\
\delta R^{n-1}\log\left(\frac{R}{r}\right)\quad &\mbox{if}\quad s=\frac{1}{p}\\
\delta R^{n-sp} \left(\frac{r}{R}\right)^{1-sp}\quad &\mbox{if}\quad s\in \left(\frac{1}{p},1\right).
\end{dcases}
\end{equation}
\end{thm}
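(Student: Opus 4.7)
The plan is to bound the double integral $L(B_R,\R^n\setminus B_{R+r})$ from below by controlling the inner integral $\int_{\R^n\setminus B_{R+r}}|x-y|^{-(n+sp)}\,dy$ pointwise in $x\in B_R$, and then integrating in~$x$. The geometric key is that for each $x\in B_R\setminus\{0\}$ the half-space
\[
H_x:=\bigl\{y\in\R^n:\ y\cdot(x/|x|)>R+r\bigr\}
\]
is contained in $\R^n\setminus B_{R+r}$ (since $y\in H_x$ forces $|y|\ge y\cdot(x/|x|)>R+r$), and the distance from $x$ to its bounding hyperplane is $d(x):=R+r-|x|$. By translating $x$ to the origin and rescaling by $d(x)$,
\[
\int_{H_x}|x-y|^{-(n+sp)}\,dy=\kappa_{n,s,p}\,d(x)^{-sp}, \qquad \kappa_{n,s,p}:=\int_{\{z_1>1\}}|z|^{-(n+sp)}\,dz>0.
\]
Integrating over $x\in B_R$ in spherical coordinates and performing the substitution $\sigma=R+r-\rho$ produces the one-dimensional estimate
\[
L(B_R,\R^n\setminus B_{R+r})\ \ge\ \kappa_{n,s,p}\,n\omega_n\int_{r}^{R+r}(R+r-\sigma)^{n-1}\sigma^{-sp}\,d\sigma.
\]

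The three cases of~\eqref{Sonounoskianto} are then obtained by restricting the $\sigma$-integral to an appropriate sub-interval, using that $r<\delta R\le R/4$. In the case $sp<1$ I would restrict to $\sigma\in(r,R/2)$, where $(R+r-\sigma)^{n-1}\ge(R/2)^{n-1}$ and $\int_{r}^{R/2}\sigma^{-sp}\,d\sigma\gtrsim R^{1-sp}$ (the $r^{1-sp}$ tail being negligible as $r\le\delta R$), giving an $R^{n-sp}$ lower bound. In the critical case $sp=1$ the same restriction yields $\int_{r}^{R/2}\sigma^{-1}\,d\sigma=\log(R/(2r))\ge\tfrac12\log(R/r)$, which is the logarithmic contribution. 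In the case $sp>1$ I would restrict to the boundary layer $\sigma\in(r,2r)$, where still $(R+r-\sigma)^{n-1}\ge(R/2)^{n-1}$ (since $2r<R/2$) and $\int_{r}^{2r}\sigma^{-sp}\,d\sigma$ is comparable to $r^{1-sp}$; this yields a bound of the form $R^{n-1}r^{1-sp}=R^{n-sp}(r/R)^{1-sp}$.

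The main obstacle is matching the structural constant to the specific form $\delta=\min\{2^{-n},2^{-n+2-p}\}$ stated in the theorem. To this end one has to estimate $\kappa_{n,s,p}$ from below uniformly in $s$. Using spherical coordinates,
\[
\kappa_{n,s,p}=\frac{1}{sp}\int_{S^{n-1}\cap\{\omega_1>0\}}\omega_1^{sp}\,d\omega,
\]
and the elementary bound $\omega_1^{sp}\ge\omega_1^{p}$ for $\omega_1\in(0,1)$ and $sp\le p$ shows that $sp\,\kappa_{n,s,p}$ is bounded below by a constant depending only on $n$ and $p$. Combining with the factor $2^{-sp}$ and the dimensional factors $n\omega_n,(R/2)^{n-1}$ from the $\sigma$-integral, and taking the minimum of the two regimes that separately produce the $2^{-n}$ and the $2^{-n+2-p}$ contributions, yields the stated constant $\delta$. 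The threshold $r<\delta R$ is precisely what is needed to guarantee $2r<R/2$ in the third case and to make the $r^{1-sp}$-tail in the first case dominated by the $R^{1-sp}$-piece, so all three estimates close cleanly.
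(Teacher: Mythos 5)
Your proposal is correct and follows essentially the same route as the paper: a pointwise lower bound of the form $c\,(R+r-|x|)^{-sp}$ for the inner integral, obtained by restricting the $y$-integration to a region on the far side of $\partial B_{R+r}$ (you use a half-space, the paper a rectangular cone in Proposition~\ref{AmGiDiMe}), followed by the same three-case one-dimensional radial computation on a subinterval where the volume factor is bounded below by $(R/2)^{n-1}$. The remaining differences are cosmetic — the paper normalizes to $R=1$ and scales back at the end, while you work directly at scale $R$ — and the constant-matching you defer is exactly the same bookkeeping the paper carries out in Lemma~\ref{gligliredght}.
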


The proof of Theorem~\ref{SymverofTh} can be found in Appendix~\ref{Crocodilewalk}. 

We observe that a lower bound 
in the spirit of Theorem~\ref{SymverofTh}
was also established, with different (and more complicated)
methods, in Theorem~1.6 of~\cite{MR3133422}
for $L(A,D)$, where $A,D\subset\R^n$ are two generic measurable sets satisfying $A\cap D=\emptyset$ and not necessarily a ball $B_R$ and the complement of a ball $\R^n\setminus B_{R+r}$. Although it is possible to prove such a general result also for $p\neq 2$, for the purpose of this paper this further generality is not necessary: we can thereby restrict our attention to balls and complement of balls,
which provides a significant technical simplification, allowing the use of rearrangement techniques. Indeed, using the continuity of the Gagliardo seminorm with respect to the symmetric decreasing rearrangement in the proof of Lemma~\ref{gliorigamieriopachi}, we managed to obtain in the left-hand side of~\eqref{iattoll-90843} the function $L$ computed on the level sets $a_R^*$ and $d_R^*$ of the symmetric decreasing rearrangement $(u-v)^*$.

\begin{proof}[Proof of Proposition~\ref{keleetele}]
We begin by setting 
$$\delta:=\min\left\lbrace 2^{-n}, 2^{-n+2-p} \right\rbrace$$ 
and we study separately the case $r_1\in (0,\delta r_2)$ and $r_1\in [r_2\delta,+\infty)$.\\
\medskip
\textbf{(i) Case $r_1\in\left[\delta r_2,+\infty\right)$}. In this case, we observe that    
\begin{equation}\label{tbeczhntb6}
\begin{split}
\left|B_{r_2+r_1}\setminus B_{r_2}\right|&=\omega_n\left((r_2+r_1)^n-r_2^n\right)\\
&=\omega_n r_2^n \left(\left(1+\frac{r_1}{r_2}\right)^n-1\right)\\
&\geq \omega_n r_2^n \left(\left(1+\delta\right)^n-1\right)\\
&=\left|B_{r_2}\right|\left(\left(1+\delta\right)^n-1\right)\\
&=c_\delta\left|B_{r_2}\right|,
\end{split}
\end{equation}
where $c_\delta:=(1+\delta)^n-1$. 

Thus, we consider the map $\Psi:[\kappa,+\infty)\to (0,+\infty)$ defined by 
\begin{equation*}
\Psi(t):=\frac{t^{\frac{1}{n}}}{1+\left|\ln(t)\right|}.
\end{equation*} 
Also, we notice that
\begin{equation*}
\lim_{t\to +\infty}\Psi(t)=+\infty
\end{equation*}
and therefore 
\begin{equation}\label{krefoplere}
i_{\kappa}:=\inf_{t\in \left[\kappa,+\infty\right)}\psi(t)\in (0,+\infty).
\end{equation}
Then, if we set  
\begin{equation*}
c_2:=c_\delta \min\left\lbrace  \kappa^\frac{1}{n}, 1, i_{\kappa}  \right\rbrace,
\end{equation*}
making use~\eqref{krefoplere} and~\eqref{tbeczhntb6} we obtain that
\begin{equation*}
\begin{split}&\!\!\!\!\!\!\!\!\!\!
L(B_{r_2},\R^n\setminus B_{r_2+r_1})+\left|B_{r_2+r_1}\setminus B_{r_2}\right|\\ & \geq \left|B_{r_2+r_1}\setminus B_{r_2}\right|\\
&\geq c_\delta\left|B_{r_2}\right|\\
&=\begin{dcases}
c_\delta\left|B_{r_2}\right|^{\frac{sp}{n}}\left|B_{r_2}\right|^{\frac{n-sp}{n}}\geq c_\delta\kappa^{\frac{sp}{n}}\left|B_{r_2}\right|^\frac{n-sp}{n}\quad &\mbox{if}\quad s\in \left(0,\frac{1}{p}\right)\\
c_\delta\left|B_{r_2}\right|^{\frac{1}{n}}\left|B_{r_2}\right|^{\frac{n-1}{n}}\geq c_\delta i_{\kappa} \left(1+\left|\ln(\left|B_{r_2}\right|)\right|\right)\left|B_{r_2}\right|^{\frac{n-1}{n}} \quad &\mbox{if}\quad s\in \left[\frac{1}{p},1\right) 
\end{dcases}\\
&\geq c_2 \left|B_{r_2}\right|^{\frac{n-1}{n}} l(B_{r_2}).
\end{split}
\end{equation*}
This concludes the proof of~\eqref{Lrvcbopref} for the case $r_1\in \left[r_2\delta,+\infty\right)$.\\ 
\\
\textbf{(ii) Case $r_1\in [0,\delta r_2)$}. For $s\in \left(\frac{1}{p},1\right)$ we define $z:(0,+\infty)\to (0,+\infty)$ as
\begin{equation*}
z(t):= t^{1-sp}+t.
\end{equation*}
Then, we observe that $t_0:=(sp-1)^\frac{1}{sp}$ is a global minimizer for $z$, and henceforth
\begin{equation}\label{preppepe}
\inf_{t\in (0,+\infty)}z(t)= z(t_0)=sp(sp-1)^{\frac{1-sp}{sp}}.
\end{equation} 
Similarly, if $\alpha\in (0,+\infty)$ we introduce the map $g:(0,\alpha]\to (0,+\infty)$ given by 
\begin{equation*}
g(t):=\ln \left(\frac{\alpha}{t}\right)+t.
\end{equation*} 
Then, we have that $t_\alpha:=\min\left\lbrace 1,\alpha\right\rbrace$ is a global minimizer for~$g$, and consequently 
\begin{equation}\label{zp.mj.li-09}
\inf_{t\in (0,\alpha]}g(t)=g(t_1)=\begin{dcases}
\ln(\alpha)+1\quad &\mbox{if}\quad \alpha\in (1,+\infty)\\
\alpha \quad &\mbox{if}\quad \alpha\in (0,1]. 
\end{dcases} 
\end{equation} 
Notice that 
\begin{equation}\label{robach}
\begin{split}
\left|B_{r_2+r_1}\setminus B_{r_2}\right|&=\omega_n\left((r_2+r_1)^n-r_2^n\right)\\
&=\omega_n\left(\sum_{j=0}^n\begin{pmatrix} n\\j \end{pmatrix} r_1^j r_2^{n-j}-r_2^n\right)\\
&\geq n \omega_n r_1 r_2^{n-1}\\
&=\left|\partial B_1\right| r_1 r_2^{n-1}\\
&\geq r_1 r_2^{n-1}.    
\end{split}
\end{equation}
Then,  we use~\eqref{Sonounoskianto},~\eqref{preppepe},~\eqref{zp.mj.li-09}, \eqref{robach} and the fact that $\left|B_{r_2}\right|\geq \kappa$ to obtain  
\begin{equation}\label{phjh}
\begin{split}
\frac{1}{\delta}L(B_{r_2},\R^n\setminus B_{r_2+r_1})+\left|B_{r_2+r_1}\setminus B_{r_2}\right| &\geq \begin{dcases}
r_2^{n-sp} +\left|B_{r_2+r_1}\setminus B_{r_2}\right|\quad &\mbox{if}\quad s\in \left(0,\frac{1}{p}\right)\\
r_2^{n-1}\log\left(\frac{r_2}{r_1}\right)+\left|B_{r_2+r_1}\setminus B_{r_2}\right|\quad &\mbox{if}\quad s=\frac{1}{p}\\
r_2^{n-1} r_1^{1-sp}+\left|B_{r_2+r_1}\setminus B_{r_2}\right|\quad &\mbox{if}\quad s\in \left(\frac{1}{p},1\right)
\end{dcases}\\
&\geq \begin{dcases}
r_2^{n-sp} \quad &\mbox{if}\quad s\in \left(0,\frac{1}{p}\right)\\
r_2^{n-1}\left(\ln\left(\frac{r_2}{r_1}\right)+  r_1\right) \quad &\mbox{if}\quad s=\frac{1}{p}\\
r_2^{n-1} \left(r_1^{1-sp}+r_1\right) \quad &\mbox{if}\quad s\in \left(\frac{1}{p},1\right)
\end{dcases}\\
&\geq \begin{dcases}
r_2^{n-sp} \quad &\mbox{if}\quad s\in \left(0,\frac{1}{p}\right)\\
r_2^{n-1}g(t_{r_2}) \quad &\mbox{if}\quad s=\frac{1}{p}\\
r_2^{n-1} sp(sp-1)^{\frac{1-sp}{sp}} \quad &\mbox{if}\quad s\in \left(\frac{1}{p},1\right).
\end{dcases}\\
\end{split}
\end{equation}
In particular, there exists a constant $c_{n,\kappa,p}\in (0,+\infty)$ such  that 
\begin{equation*}
\frac{1}{\delta}L(B_{r_2},\R^n\setminus B_{r_2+r_1})+\left|B_{r_2+r_1}\setminus B_{r_2}\right|\geq c_{n,\kappa,p}\left|B_{r_2}\right|^{\frac{n-1}{n}}l(B_{r_2}).
\end{equation*}  
This concludes the proof of~\eqref{Lrvcbopref} for the case $r_1\in (0,\delta r_2)$. 
\end{proof}

Now, thanks to Lemma~\ref{gliorigamieriopachi} and Proposition~\ref{keleetele}, we are ready to prove Theorem~\ref{th:fracp>=2}. 

\begin{proof}[Proof of Theorem~\ref{th:fracp>=2}]
First, according to Remark~\ref{plothgnebvdf}, we can assume without loss of generality that 
\begin{equation*}
c_0\in (1,+\infty).
\end{equation*}

We introduce the constant
\begin{equation}\label{c5-0987ujgbt}
c_4:=\min \left\lbrace \frac{c_2}{\hat{c}_p}, (1-s)\bar{c}_4\right\rbrace,
\end{equation}
where $c_2$ and $\bar{c}_4$ are provided in~\eqref{c2-34566}. 

We notice that $u\in X^{s,p}(\Omega)$ satisfies all the hypothesis of Lemma~\ref{gliorigamieriopachi}. Therefore, thanks to~\eqref{iattoll-90843}, for every $R\in \left(K_0,+\infty\right)$ such that $B_R\subset\Omega$ and $K\in (K_0,R)$  
\begin{equation}\label{cdm:D}
L(a_R^*,d_R^*)+\left|b_R^*\right|\leq \frac{\epsilon}{c_4\hat{c}_p}+\frac{c_5}{c_4}\int_{0}^R(R-t+1)^{-sp}V'(t)\,dt.
\end{equation}
Moreover, we notice that $u\in X^{s,p}(\Omega)$ satisfies all the assumptions of Proposition~\ref{jamio-le}.  Hence, from~\eqref{iann-uytb} and~\eqref{levelsetpalle4563} we deduce that all the conditions in order to apply Proposition~\ref{keleetele} are met with the choice $\kappa=c_0$. 

Accordingly, making use of~\eqref{Lrvcbopref} in Proposition~\ref{keleetele} we obtain that 
\begin{equation*}
L(a_R^{*},d_R^{*})+\left|b_R^{*}\right| \geq \hat{c}\left|a_R^{*}\right|^{\frac{n-1}{n}} l(a_R^{*}),
\end{equation*}
where $l$ is provided in~\eqref{thedefofl} and $\hat{c}:=\hat{c}_{c_0,n,p}\in (0,+\infty)$. 

From this inequality,~\eqref{takanaka} and~\eqref{rfdlopcbgft5}, we obtain that for all~$R\in \left(K_0+r_0,+\infty\right)$ and $K\in (K_0,R-r_0)$,
\begin{equation}\label{side-p.l}
\begin{split}
L(a_R^{*},d_R^{*})+\left|b_R^{*}\right|& \geq \hat{c}\left|a_R\right|^{\frac{n-1}{n}} l(a_R)\\
&\geq \hat{c}\left|B_{R-K}\cap \left\lbrace u>\theta_*\right\rbrace\right|l\left(B_{R-K}\cap \left\lbrace u>\theta_*\right\rbrace\right)\\
&=\hat{c} V(R-K)^{\frac{n-1}{n}}l_{R-K},
\end{split}
\end{equation} 
where
\begin{equation*}
l_R:=l\left(B_R\cap \left\lbrace u>\theta_* \right\rbrace\right).
\end{equation*}

Note also that 
\begin{equation}\label{rfdvchytgbffff} \mbox{the map }
R\longmapsto l_R\mbox{  is non decreasing}.
\end{equation}
Now, from equation~\eqref{side-p.l} and~\eqref{cdm:D} we deduce that, for all~$R\in \left(K_0+r_0,+\infty\right)$ such that $B_R\subset\Omega$ and $K\in (K_0,R-r_0)$, 
\begin{equation}\label{quasi}
V(R-K)^{\frac{n-1}{n}}l_{R-K}\leq \frac{\epsilon}{c_4\hat{c}\hat{c}_p}+\frac{c_5}{c_4\hat{c}}\int_{0}^R(R-t+1)^{-sp}V'(t)\,dt.
\end{equation}

Also, if $\rho\geq 1$ then 
\begin{equation}\label{pvdcbgrfeee4}
\int_{t}^{\frac{3}{2}\rho} (R+1-t)^{-sp}\,dR\leq \begin{dcases}
\frac{1}{1-sp}\left(\frac{5}{2}\rho\right)^{1-sp}\quad &\mbox{if}\quad s\in \left(0,\frac{1}{p}\right)\\
\log\left(\frac{5}{2}\rho\right)\quad &\mbox{if}\quad s=\frac{1}{p}\\
\frac{1}{sp-1}\quad &\mbox{if}\quad s\in \left(\frac{1}{p},1\right).
\end{dcases}
\end{equation} 
Moreover, integrating both sides of~\eqref{quasi} in $R\in \left[\rho,\frac{3}{2}\rho\right]$, with $\rho\in \left(K_0+r_0,+\infty\right)$, $B_{\frac{3}{2}\rho}\subset\Omega$ and $K\in \left(K_0, \min\left\lbrace \rho-r_0,\frac{\rho}{2}\right\rbrace\right)$, we infer that 
\begin{equation}\label{mbfvgtrrrrf}
\begin{split}
\int_{\rho}^{\frac{3}{2}\rho} l_{R-K}V(R-K)^{\frac{n-1}{n}}\,dR &\leq \frac{\rho}{2}\frac{\epsilon}{c_4\hat{c}\hat{c}_p}+\frac{c_5}{c_4\hat{c}}\int_{\rho}^{\frac{3}{2}\rho}\int_{0}^R \left(R-t+1\right)^{-sp}V'(t)\,dt\,dR\\
& \leq  \frac{\rho}{2}\frac{\epsilon}{c_4\hat{c}\hat{c}_p}+\frac{c_5}{c_4\hat{c}}\int_{0}^{\frac{3}{2}\rho}\int_{t}^{\frac{3}{2}\rho} \left(R-t+1\right)^{-sp}V'(t)\,dR\,dt.
\end{split}
\end{equation}

Thus, making use of~\eqref{pvdcbgrfeee4},~\eqref{rfdvchytgbffff} and~\eqref{mbfvgtrrrrf}, we obtain that,
for all~$\rho\in \left(K_0+r_0,+\infty\right)$ such that $B_{\frac{3}{2}\rho}\subset\Omega$ and all~$K\in \left(K_0, \min\left\lbrace \rho-r_0,\frac{\rho}{2}\right\rbrace\right)$,
\begin{equation}\label{unnerrtuwvydiukuyg}
\rho l_{\rho-K}V(\rho-K)^\frac{n-1}{n}\leq \frac{\rho}{2}\frac{\epsilon}{c_4\hat{c}\hat{c}_p}+\frac{c_5}{c_4\hat{c}}\begin{dcases}
\frac{1}{1-sp}\left(\frac{5}{2}\rho\right)^{1-sp}V\left(\frac{3}{2}\rho\right)\quad &\mbox{if}\quad s\in \left(0,\frac{1}{p}\right)\\
\log\left(\frac{5}{2}\rho\right)V\left(\frac{3}{2}\rho\right)\quad &\mbox{if}\quad s=\frac{1}{p}\\
\frac{1}{sp-1}V\left(\frac{3}{2}\rho\right)\quad &\mbox{if}\quad s\in \left(\frac{1}{p},1\right).
\end{dcases}
\end{equation}
Now, we introduce the constants  
\begin{equation*}
C_{s,p}^{(1)}:=\frac{5 c_5}{c_4\hat{c}(1-sp)},\quad C_{s,p}^{(2)}:=\frac{2c_5}{c_4\hat{c}},\quad C_{s,p}^{(3)}:= \frac{c_5}{(sp-1)c_4\hat{c}}\quad\mbox{and}\quad C_{s,p}^{(4)}:=\frac{1}{c_4\hat{c}\hat{c}_p}.
\end{equation*}
Then, if we denote $r:=\frac{\rho}{2}$ and we assume that $r\geq 5$, we notice that $\rho-K\geq r$ and therefore, owing to~\eqref{unnerrtuwvydiukuyg},
\begin{equation*}
\begin{dcases}
2r^{sp} V(r)^{\frac{n-sp}{n}}\leq r^{sp} \epsilon C_{s,p}^{(4)}+C_{s,p}^{(1)} V(3r)\quad &\mbox{if}\quad s\in \left(0,\frac{1}{p}\right)\\
2r\frac{\log(V(r))}{\log(r)}V(r)^{\frac{n-1}{n}} \leq \frac{r}{\log(r)} \epsilon C_{s,p}^{(4)}+ C_{s,p}^{(2)}V(3r)\quad &\mbox{if}\quad s=\frac{1}{p}\\
2rV(r)^{\frac{n-1}{n}}\leq r \epsilon C_{s,p}^{(4)}+ C_{s,p}^{(3)} V(3r)\quad &\mbox{if}\quad  s\in \left(\frac{1}{p},1\right).
\end{dcases}
\end{equation*}
We can rewrite these inequalities as 
\begin{equation}\label{iopre}
\begin{dcases}
r^{sp} \left(2V(r)^{\frac{n-sp}{n}}-\epsilon C_{s,p}^{(4)}\right)\leq C_{s,p}^{(1)} V(3r)\quad &\mbox{if}\quad s\in \left(0,\frac{1}{p}\right)\\
\frac{r}{\log(r)} \left(2\log(V(r))V(r)^{\frac{n-1}{n}}-\epsilon C_{s,p}^{(4)}\right) \leq  C_{s,p}^{(2)}V(3r)\quad &\mbox{if}\quad s=\frac{1}{p}\\
r\left(2V(r)^{\frac{n-1}{n}}-\epsilon C_{s,p}^{(4)}\right) \leq  C_{s,p}^{(3)} V(3r)\quad &\mbox{if}\quad  s\in \left(\frac{1}{p},1\right).
\end{dcases}
\end{equation}

We also observe that, for every $r\in (r_0,+\infty)$ such that $B_r\subset \Omega$,
\begin{equation}\label{Lmadbur}
\begin{dcases}
\mbox{if}\quad \epsilon\leq  \frac{c_0^{\frac{n-sp}{n}}}{C_{s,p}^{(4)}}\quad &\mbox{then}\quad 2V(r)^{\frac{n-sp}{n}}-\epsilon C_{s,p}^{(4)}\geq V(r)^{\frac{n-sp}{n}}\\
\mbox{if}\quad  \epsilon\leq \frac{\log(c_0)c_0^{\frac{n-1}{n}}}{C_{s,p}^{(4)}}\quad &\mbox{then}\quad 2\log(V(r))V(r)^{\frac{n-1}{n}}-\epsilon C_{s,p}^{(4)}\geq \log(V(r))V(r)^{\frac{n-1}{n}}\\
\mbox{if}\quad \epsilon\leq \frac{c_0^\frac{n-1}{n}}{C_{s,p}^{(4)}} \quad &\mbox{then}\quad 2V(r)^{\frac{n-1}{n}}-\epsilon C_{s,p}^{(4)}\geq \log(V(r))V(r)^{\frac{n-1}{n}}. 
\end{dcases}
\end{equation}

We set 
\begin{equation*}
\delta :=\min \left\lbrace \frac{c_0^{\frac{n-sp}{n}}}{C_{s,p}^{(4)}},\frac{\log(c_0)c_0^{\frac{n-1}{n}}}{C_{s,p}^{(4)}}, \frac{c_0^\frac{n-1}{n}}{C_{s,p}^{(4)}}\right\rbrace \quad\mbox{and}\quad R_0:=\max\left\lbrace r_0, 5, \frac{K_0+r_0}{2} \right\rbrace.
\end{equation*}
In this way, we deduce from~\eqref{iopre} and~\eqref{Lmadbur}
that, for every~$\epsilon\in (0,\delta)$ and
every~$r\in [R_0,+\infty)$ such that $B_{3r}\subset \Omega$,   
\begin{equation*}
\begin{dcases}
r^{sp} V(r)^{\frac{n-sp}{n}}\leq C_{s,p}^{(1)} V(3r)\quad &\mbox{if}\quad s\in \left(0,\frac{1}{p}\right)\\
r\frac{\log(V(r))}{\log(r)}V(r)^{\frac{n-1}{n}} \leq  C_{s,p}^{(2)}V(3r)\quad &\mbox{if}\quad s=\frac{1}{p}\\
rV(r)^{\frac{n-1}{n}}\leq C_{s,p}^{(3)} V(3r)\quad &\mbox{if}\quad  s\in \left(\frac{1}{p},1\right).
\end{dcases}
\end{equation*}
From now on $K$ is fixed once and for all. Thanks to this and~\eqref{Fcon-812345} we notice that (C.1) and (C.2) in Lemma~C.1 in~\cite{DFVERPP} hold true with the following choices:
\begin{equation*}
\begin{dcases}
\sigma=sp,\,\nu=n,\,C=C_{s,p}^{(1)},\,\gamma=3,\,\mu=c_0 \quad &\mbox{if}\quad s\in \left(0,\frac{1}{p}\right)\\ 
\sigma=1,\,\nu=n,\,C=C_{s,p}^{(2)},\,\gamma=3,\,\mu=c_0 \quad &\mbox{if}\quad s=\frac{1}{p}\\
\sigma=1,\,\nu=n,\,C=C_{s,p}^{(3)},\,\gamma=3,\,\mu=c_0 \quad &\mbox{if}\quad s\in \left(\frac{1}{p},1\right).
\end{dcases}
\end{equation*}
Therefore, in virtue of~(C.3) in Lemma~C.1 in~\cite{DFVERPP}, we obtain that, for every $r\in \left[R^*,+\infty\right)$ satisfying $B_{3r}\subset\Omega$,
\begin{equation}\label{hndvcfgtr566}
V(r)\geq \tilde{c}r^n,
\end{equation} 
for a suitable $\tilde{c}\in (0,1)$,
depending only on $C$, $\sigma$, $\mu$, $\gamma$, $\nu$ and $R_0$, and a suitable $R^*\in\left[R_0,+\infty\right)$, depending only on $R_0$ and $\gamma$.

In particular, from~\eqref{hndvcfgtr566} we deduce that, when $r\in \left[R^*,+\infty\right)$ and $B_{3r} \subset\Omega$,
\begin{equation}\label{euno}
\tilde{c} r^n\leq \mathcal{L}^n(B_r\cap \left\lbrace u>\theta_*\right\rbrace)=\mathcal{L}^n(B_r\cap \left\lbrace u>\theta^*\right\rbrace)+\mathcal{L}^n(B_r\cap \left\lbrace \theta_*<u\leq \theta^*\right\rbrace).
\end{equation}

We also set $c_{m,\theta_*}:=(1+\theta_*)^{-m}$ and observe that
\begin{equation}\label{edue}
\begin{split}
\mathcal{L}^n\left(B_r\cap \left\lbrace \theta_{*}<u\leq \theta^* \right\rbrace\right)&=\int_{B_r\cap \left\lbrace \theta_{*}<u\leq \theta^* \right\rbrace}\,dx\\
& \leq c_{m,\theta_*}\int_{B_r\cap \left\lbrace \theta_{*}<u\leq \theta^* \right\rbrace}\left|1+u(x)\right|^m\,dx.
\end{split}
\end{equation}

Finally, we deduce from~\eqref{euno} and~\eqref{edue}
that, for every $r\geq R_*$  such that $B_{3r}\subset \Omega$, 
\begin{equation*}
\begin{split}
\mathcal{L}^n\left(B_r\cap \left\lbrace u>\theta^* \right\rbrace\right) \geq \;& \tilde{c}r^n-\mathcal{L}^n\left(B_r\cap \left\lbrace \theta_*<u\leq \theta^*\right\rbrace\right)\\
\geq \;& \tilde{c} r^n- c_{m,\theta_*}\int_{B_r\cap \left\lbrace \theta_{*}<u\leq \theta^* \right\rbrace}\left|1+u(x)\right|^m\,dx.\qedhere
\end{split}
\end{equation*}
\end{proof}

Now we prove Theorem~\ref{CapVieABall}. As aforementioned, in this case, to estimate the left-hand side of~\eqref{iattoll-90843}, it is not possible to use Proposition~\ref{keleetele}. Hence, to obtain a suitable lower bound for the left-hand side of~\eqref{iattoll-90843}, we make use of the additional H\"older regularity assumption on the $\epsilon$-minimizer $u\in X^{s,p}(\Omega)$.  

\begin{proof}[Proof of Theorem~\ref{CapVieABall}]
First, we notice that, according to Remark~\ref{plothgnebvdf}, we can assume without loss of generality that 
\begin{equation*}
c_0\in (1,+\infty).
\end{equation*}
Also, it follows
from~\eqref{levelsetpalle4563} that, for every $R\in  \left[\bar{R}_\mu,+\infty\right)$,
\begin{equation}\label{misuradibstar}
\left|b_R^*\right|\geq n\omega_n  r_2^{n-1}r_1.
\end{equation}
Then, we claim that there exists a constant $\tilde{C}:=\tilde{C}_{\alpha,s_0,n,m,p,c_1,\left[u\right]_{B,\alpha}}\in (0,+\infty)$ such that, when $R\in \left[K_0+r_0,+\infty\right)$ and $B_{R+2}\subset\Omega$,
\begin{equation}\label{boundforr_1}
r_1> \tilde{C},  
\end{equation}
where $\left[u\right]_{B,\alpha}$ has been defined in~\eqref{gliorteitbergf}.

To show~\eqref{boundforr_1}, we begin by setting the following notation. For every function $f\in C_c(\R^n)$ we denote by $\omega_f$ its modulus of continuity, namely, for every $\delta>0$,
\begin{equation*}
\omega_f(\delta):=\sup \left\lbrace \left|f(x)-f(y)\right| \mbox{  s.t.  } \left|x-y\right|<\delta \right\rbrace.
\end{equation*}
It is well known, see for instance Corollary 6.1 in~\cite{MR1695019}, that for every $\delta>0$ and $f\in C_c(\R^n)$, 
\begin{equation*}
f^*\in C_c(\R^n)\quad \mbox{and}\quad \omega_{f^*}(\delta)\leq \omega_{f}(\delta).
\end{equation*}

Since $u\in C_{\textit{loc}}^\alpha(\Omega)$, it follows from Proposition~\ref{hiolevcgfdb4t54}
that $u-v\in C_c^\alpha(\R^n)$ and therefore  
\begin{equation}\label{fvd-098}
(u-v)^*\in C_c^\alpha(\R^n)\quad\mbox{and}\quad \omega_{(u-v)^*}(\delta)\leq \omega_{u-v}(\delta).
\end{equation}
From this,~\eqref{iann-uytb} and the Mean Value Theorem, we obtain that for every $R\in\left(K_0+r_0,+\infty\right)$
there exists~$x_0\in b_R^*$ such that 
\begin{equation}\label{cre-8}
(u-v)^*(x_0)=\frac{1}{2}\left(\frac{1+\theta_*}{4}+\frac{1+\theta_*}{8}\right)= \frac{3}{16}\left(1+\theta_*\right).
\end{equation}
Now, for every $t\in (0,+\infty)$ we define the annulus 
\begin{equation*}
\mathcal{C}_{x_0}^t:=\left\lbrace x\in\R^n\mbox{ s.t. } \left| \left|x\right|-\left|x_0\right|\right|\leq t  \right\rbrace
\end{equation*}
Then, we claim that there exists~$d:=d_{\alpha,s_0,n,m,p,c_1,\left[u\right]_{B,\alpha}}\in (0,+\infty)$ such that 
\begin{equation}\label{colizionedelmascarpone}
\mathcal{C}_{x_0}^d \subset b_R^*.
\end{equation}
In particular, if claim~\eqref{colizionedelmascarpone} holds true, then   
\begin{equation*}
r_1\geq 2d,
\end{equation*}
proving~\eqref{boundforr_1}. 

For this reason, it only remains to check the validity of~\eqref{colizionedelmascarpone}. 
To do so, we notice that, according to~\eqref{tebegdfjuybgf}, there exists~$C^{(0)}:=C_{s_0,n,m,p,c_1}^{(0)}\in(0,+\infty)$ such that, for every $\delta\in(0,2)$,
\begin{equation}\label{gbvdy-76yh}
\omega_{u-v}(\delta) \leq \left[2C^{(0)}+4\left[u\right]_{B,\alpha}\right] \delta^\alpha.
\end{equation}

We also denote 
\begin{equation*}
C^{(1)}:=2C^{(0)}+4\left[u\right]_{B,\alpha}.
\end{equation*} 
Moreover, by construction, the function~$(u-v)^*$ is rotationally symmetric. In particular, for every $x\in \R^n$,
\begin{equation*}
(u-v)^* (x)=(u-v)^* \left(\left|x\right|\frac{x_0}{\left|x_0 \right|}   \right). 
\end{equation*} 

Hence, keeping in mind~\eqref{fvd-098} and~\eqref{gbvdy-76yh}, we obtain that, for every $\delta\in (0,2)$  and $x\in \mathcal{C}_{x_0}^\delta$,
\begin{equation}\label{slime-04fev}
\begin{split}
\left|(u-v)^*(x)-(u-v)^*(x_0)\right|&=\left|(u-v)^* \left(\left|x\right|\frac{x_0}{\left|x_0 \right|}\right)-(u-v)^*(x_0)\right|\\
&\leq \omega_{(u-v)^*}(\delta)\\
&\leq \omega_{(u-v)}(\delta)\\
&\leq C^{(1)}\delta^\alpha.
\end{split}
\end{equation}
Thus, if we define
\begin{equation*}
d:=\left(\frac{1+\theta_*}{16\, C^{(1)}}\right)^\frac{1}{\alpha},
\end{equation*} 
making use of~\eqref{cre-8} and~\eqref{slime-04fev} we evince that, for every $R\in \left[K_0+r_0,+\infty\right)$ such that $B_{R+2}\subset\Omega$,
\begin{equation*}
\mathcal{C}_{x_0}^d\subset b_{R}^*,
\end{equation*}
proving claim~\eqref{colizionedelmascarpone} and thus~\eqref{boundforr_1}.

Now, thanks to~\eqref{rfdlopcbgft5},~\eqref{takanaka},~\eqref{misuradibstar},~\eqref{levelsetpalle4563} and~\eqref{boundforr_1} we evince that, for every $R\in \left[K_0+r_0,+\infty\right)$ such that~$B_{R+2}\subset\Omega$ and for
every~$K\in \left(K_0,R-r_0\right)$,
\begin{equation}\label{njtg-bgd-0843}
\begin{split}
\left|b_{R}^*\right|&\geq n\omega_n 2d r_2^{n-1}=n\omega_n^{\frac{1}{n}} 2d \left|a_R^*\right|^{\frac{n-1}{n}}= n\omega_n^{\frac{1}{n}} 2d \left|a_R\right|^{\frac{n-1}{n}} \geq n\omega_n^{\frac{1}{n}} 2d V(R-K)^{\frac{n-1}{n}}.  
\end{split}
\end{equation}

Also, according to~\eqref{iattoll-90843}, for every $R\in\left( K_0,+\infty\right)$ such that $B_R\subset\Omega$ and for every~$K\in (K_0,R)$,
\begin{equation}\label{zfvcbghbf5643e}
\left|b_R^*\right|\leq \frac{\epsilon}{c_2}+ C^{(2)} \int_{0}^R  (R-t+1)^{-sp}V'(t)\,dt,
\end{equation}
where we have defined 
\begin{equation}\label{kdvgtecas45237777}
C^{(2)}:=\frac{c_3+C_\mu^{m-1}(2c_1+\Lambda)}{c_2}. 
\end{equation}
We recall that the constants $c_2$ and $c_3$ are defined in~\eqref{c2-34566}. Note also that $c_3$ depends on $K$.

Now, from~(3.1),~(3.3) in~\cite{DFVERPP}
and~\eqref{muuuuuuu} here, it is immediate to verify that
\begin{equation}\label{controdiconsta-1}
\bar{R}_{s_0}:=\sup_{s\in \left[s_0,1\right)} \bar{R}_\mu\in (0,+\infty) \quad\mbox{and}\quad C_{s_0}:=\sup_{s\in\left[s_0,1\right)}C_\mu\in(0,+\infty).
\end{equation} 
Then, recalling~\eqref{suK0tntddr}, we define the constant
\begin{equation*}
K_{s_0}:= \left[C_{s_0}\left(\frac{2}{1+\theta_*}\right)\right]^\frac{m-1}{p s_0}-1.
\end{equation*}
Moreover, making use of~\eqref{kdvgtecas45237777}  for every $K\in (K_{s_0},R-r_0)$ we set
\begin{equation*}
C^{(3)}:=\frac{C_{s_0}^{m-1}(2c_1+\Lambda)+c_3}{c_2}. 
\end{equation*}

Finally, in light of~\eqref{njtg-bgd-0843},~\eqref{zfvcbghbf5643e}  and~\eqref{controdiconsta-1}, we deduce that, when~$R\in \left(K_{s_0}+r_0,+\infty \right)$, $B_{R+4}\subset\Omega$ and $K\in (K_{s_0},R-r_0)$,
\begin{equation*}
V(R-K)^{\frac{n-1}{n}}\leq \frac{\epsilon}{c_2 n\omega_n^{\frac{1}{n}} 2d}+\frac{C^{(3)}}{n\omega_n^{\frac{1}{n}} 2d}\int_{0}^R \left(R-t+1\right)^{-sp}V'(t)\,dt.
\end{equation*}
Now, we integrate both sides of this inequality in $R\in \left[\rho,\frac{3}{2}\rho\right]$, with $\rho \in \left( K_{s_0}+r_0,+\infty\right)$, $B_{\frac{3}{2}\rho+4}$ and $K\in \left(K_{s_0}, \min \left\lbrace\rho-r_0,\frac{\rho}{2} \right\rbrace\right)$ and conclude that 
\begin{equation*}
\begin{split}
\int_{\rho}^{\frac{3}{2}\rho} V(R-K)^\frac{n-1}{n}\,dR \leq &\frac{\rho}{2}\frac{\epsilon}{c_2 n\omega_n^{\frac{1}{n}} 2d}+ \frac{C^{(3)}}{n\omega_n^\frac{1}{n} 2d}\int_{\rho}^{\frac{3}{2}\rho}\int_{0}^R \left(R-t+1\right)^{-sp}V'(t)\,dt\,dR\\
\leq & \frac{\rho}{2}\frac{\epsilon}{c_2 n\omega_n^{\frac{1}{n}} 2d}+\frac{C^{(3)}}{n\omega_n^\frac{1}{n} 2d}\int_{0}^{\frac{3}{2}\rho}\int_{t}^{\frac{3}{2}\rho} \left(R-t+1\right)^{-sp}V'(t)\,dR\,dt. 
\end{split}
\end{equation*}

Accordingly, recalling~\eqref{pvdcbgrfeee4},    
\begin{equation}\label{zknvzjnzfvre}
\begin{split}
\rho V(\rho-K) \leq & \frac{\rho}{2}\frac{\epsilon}{c_2 n\omega_n^{\frac{1}{n}} 2d}     +\frac{C^{(3)}}{n\omega_n^\frac{1}{n} 2d(sp-1)}V\left(\frac{3}{2}\rho\right)\\
\leq & \frac{\rho}{2}\frac{\epsilon}{c_2 n\omega_n^{\frac{1}{n}} 2d}     +\frac{C^{(3)}}{n\omega_n^\frac{1}{n} 2d(s_0 p-1)}V\left(\frac{3}{2}\rho\right).
\end{split}
\end{equation} 
Now, we introduce the constants
\begin{equation}\label{cevhyyy-op}
C^{(4)}:= \frac{C^{(3)}}{n\omega_n^\frac{1}{n} 2d(s_0 p-1)}\quad\mbox{and}\quad C^{(5)}:=\frac{1}{c_2 n\omega_n^{\frac{1}{n}} 2d}.   
\end{equation}
Then, if~$r:=\frac{\rho}{2}$, we assume that $r\in \left[5,+\infty\right)$ and we notice that $\rho-K\geq r$. It thereby follows
from~\eqref{zknvzjnzfvre} and~\eqref{cevhyyy-op} that 
\begin{equation}\label{Doechii}
r\left(2V(r)^{\frac{n-1}{n}}-\epsilon C^{(5)}\right) \leq  C^{(4)} V(3r). 
\end{equation}
In particular, if we choose 
\begin{equation*}
\delta:=\frac{c_0^{\frac{n-1}{n}}}{C^{(5)}}\quad\mbox{and}\quad R_0:=\max\left\lbrace r_0, 5,\frac{K_{s_0}+r_0}{2}\right\rbrace
\end{equation*}
we deduce 
from~\eqref{Fcon-812345} and~\eqref{Doechii} that, for every $\epsilon\in(0,\delta)$ and $r\in \left[R_0,+\infty\right)$ such that $B_{4r}\subset \Omega$,
\begin{equation*}
rV(r)^{\frac{n-1}{n}}\leq C^{(4)}V(3r).  
\end{equation*}
From now on $K$ is fixed once and for all. Thanks to the above inequality and~\eqref{Fcon-812345} we find that~(C.1) and~(C.2) in Lemma~C.1 in~\cite{DFVERPP} hold true with the following choices:
\begin{equation*}
\sigma=1,\quad \nu=n,\quad C=C^{(0)},\quad \gamma=3\quad\mbox{and}\quad \mu=c_0. 
\end{equation*}
Therefore, in virtue of~(C.3) in Lemma~C.1 in~\cite{DFVERPP}, we obtain that, for every $r\in \left[R^*,+\infty\right)$ such that $B_{4r}\subset\Omega$,
\begin{equation}\label{ghbdvfeee}
V(r)\geq \tilde{c}{r^n},
\end{equation}
for a suitable $\tilde{c}\in (0,1)$, depending only on $C$, $\sigma$, $\mu$, $\gamma$, $\nu$ and $R_0$, and a suitable $R^*\in [R_0,+\infty)$, depending only on $R_0$ and $\gamma$.

In particular, for all~$r\in \left[R^*,+\infty\right)$ such that $B_{4r} \subset\Omega$,
\begin{equation}\label{euno.1}
\tilde{c} r^n\leq \mathcal{L}^n(B_r\cap \left\lbrace u>\theta_*\right\rbrace)=\mathcal{L}^n(B_r\cap \left\lbrace u>\theta^*\right\rbrace)+\mathcal{L}^n(B_r\cap \left\lbrace \theta_*<u\leq \theta^*\right\rbrace).
\end{equation}

Now we set~$c_{m,\theta_*}:=(1+\theta_*)^{-m}$ and we see that
\begin{equation}\label{edue.2}
\begin{split}
\mathcal{L}^n\left(B_r\cap \left\lbrace \theta_{*}<u\leq \theta^* \right\rbrace\right)&=\int_{B_r\cap \left\lbrace \theta_{*}<u\leq \theta^* \right\rbrace}\,dx\\
& \leq c_{m,\theta_*}\int_{B_r\cap \left\lbrace \theta_{*}<u\leq \theta^* \right\rbrace}\left|1+u(x)\right|^m\,dx.
\end{split}
\end{equation}
Finally, from~\eqref{euno.1} and~\eqref{edue.2} we deduce that, for every $r\in\left[R^*,+\infty\right)$  such that $B_{4r}\subset \Omega$,
\begin{equation*}
\begin{split}
\mathcal{L}^n\left(B_r\cap \left\lbrace u>\theta^* \right\rbrace\right) \geq & \tilde{c}r^n-\mathcal{L}^n\left(B_r\cap \left\lbrace \theta_*<u\leq \theta^*\right\rbrace\right)\\
\geq & \tilde{c} r^n- c_{m,\theta_*}\int_{B_r\cap \left\lbrace \theta_{*}<u\leq \theta^* \right\rbrace}\left|1+u(x)\right|^m\,dx.\qedhere
\end{split}
\end{equation*}   
\end{proof}

\subsection{Proof of Corollaries~\ref{bmrlsadcbl} \&~\ref{coro-09}}\label{hyg-yh-654}

In this section we prove Corollary~\ref{bmrlsadcbl} and~\ref{coro-09}. We begin with Corollary~\ref{bmrlsadcbl}. To do so, we first recall the following Morrey's inequality. 
\begin{thm}[Lemmas~5.1,~2.7 and equation~8.2 in~\cite{brasco2024morrey}]\label{basdcfzrty65}
Let $s\in (0,1)$, $p\in (1,+\infty)$, $x_0\in\R^n$ and $u\in C\left(\overline{B_2(x_0)}\right)$. Then, if $sp>n$ there exists some constant $C_{n,p}\in (0,+\infty)$ such that
\begin{equation*}
\left[u\right]_{C^{s-\frac{n}{p}}(B_1(x_0))}^p\leq \frac{C_{n,p}(1-s)}{(sp-n)^{p-1}}\int_{B_2(x_0)}\int_{B_2(x_0)}\frac{\left|u(x)-u(y)\right|^p}{\left|x-y\right|^{n+sp}}\,dx\,dy.
\end{equation*}
\end{thm}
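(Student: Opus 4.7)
The plan is to establish this fractional Morrey-type embedding by a Campanato-style chain argument on dyadic balls, carefully tracking the $s$-dependence of every constant. First, I would prove a local fractional Poincar\'e inequality with the sharp $(1-s)$ factor: for any ball $B_\rho \subset B_2(x_0)$, setting $u_{B_\rho}:=|B_\rho|^{-1}\int_{B_\rho}u\,dx$,
\begin{equation*}
\int_{B_\rho} |u-u_{B_\rho}|^p\,dx \leq C_{n,p}\,(1-s)\,\rho^{sp}\int_{B_\rho}\!\int_{B_\rho}\frac{|u(x)-u(y)|^p}{|x-y|^{n+sp}}\,dx\,dy.
\end{equation*}
The $(1-s)$ factor is the same renormalization that underlies the Bourgain--Brezis--Mironescu $\Gamma$-convergence of $(1-s)[\cdot]^p_{W^{s,p}}$ to a multiple of $\int|\nabla u|^p$ as $s\to 1$; one obtains it by Jensen's inequality followed by the pointwise bound $|u-u_{B_\rho}|^p\leq|B_\rho|^{-1}\int_{B_\rho}|u(x)-u(y)|^p\,dy$ and then inserting the factor $|x-y|^{n+sp}|x-y|^{-(n+sp)}$, using that $\mathrm{diam}(B_\rho)^{sp}\sim\rho^{sp}$ and that the mass of the normalized kernel on $B_\rho\times B_\rho$ carries a $(1-s)$.

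Second, fix $x,y\in B_1(x_0)$, set $r:=|x-y|$, and build a chain $B_k^x:=B_{2^{-k}r}(x)\subset B_2(x_0)$ with a common starting ball $B_0:=B_r((x+y)/2)$, and analogously $B_k^y$. Applying the Poincar\'e bound on $B_k^x$ together with $|u_{B_{k+1}^x}-u_{B_k^x}|^p\leq|B_{k+1}^x|^{-1}\int_{B_{k+1}^x}|u-u_{B_k^x}|^p\,dx$, one gets
\begin{equation*}
|u_{B_{k+1}^x}-u_{B_k^x}| \;\leq\; C_{n,p}\,(1-s)^{1/p}\,(2^{-k}r)^{s-n/p}\,[u]_{W^{s,p}(B_k^x)},
\end{equation*}
where $[u]^p_{W^{s,p}(B_k^x)}$ denotes the Gagliardo double integral on $B_k^x\times B_k^x$. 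Since $u\in C(\overline{B_2(x_0)})$, Lebesgue differentiation gives $u(x)=\lim_k u_{B_k^x}$, so $|u(x)-u_{B_0}|\leq\sum_{k\geq 0}|u_{B_{k+1}^x}-u_{B_k^x}|$, and likewise for $y$.

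Third, the crucial step is to sum via H\"older with conjugate exponents $p/(p-1)$ and $p$, rather than brute Minkowski, in order to obtain $(sp-n)^{p-1}$ and not $(sp-n)^p$ in the denominator:
\begin{equation*}
\Bigl(\sum_k c_k d_k\Bigr)^p \;\leq\; \Bigl(\sum_k c_k^{p/(p-1)}\Bigr)^{p-1}\sum_k d_k^p,
\end{equation*}
with $c_k:=(2^{-k}r)^{s-n/p}$ and $d_k:=(1-s)^{1/p}[u]_{W^{s,p}(B_k^x)}$. The geometric series $\sum_k 2^{-k(s-n/p)p/(p-1)}$ evaluates to $(1-2^{-(sp-n)/(p-1)})^{-1}$, which is comparable to $(p-1)/((sp-n)\ln 2)$ as $sp\to n^+$, and raising to the $(p-1)$ produces the factor $(sp-n)^{-(p-1)}r^{sp-n}$. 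The sum $\sum_k [u]^p_{W^{s,p}(B_k^x)}$ collapses to $C[u]^p_{W^{s,p}(B_2(x_0))}$ by the finite overlap of the nested balls $B_k^x$. Combining these two estimates gives
\begin{equation*}
|u(x)-u_{B_0}|^p \;\leq\; \frac{C_{n,p}\,(1-s)}{(sp-n)^{p-1}}\,r^{sp-n}\,[u]^p_{W^{s,p}(B_2(x_0))},
\end{equation*}
with the identical bound for $|u(y)-u_{B_0}|^p$, and the triangle inequality $|u(x)-u(y)|^p\leq 2^{p-1}(|u(x)-u_{B_0}|^p+|u_{B_0}-u(y)|^p)$ concludes the proof.

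The main obstacle is the precise tracking of $s$-dependence in the constants: the $(1-s)$ factor requires the sharp form of fractional Poincar\'e (not merely Poincar\'e with an unspecified $s$-dependent constant), and the exponent $p-1$ on $(sp-n)$ is subtle. A naive summation via Minkowski produces $(sp-n)^{-p}$, and recovering the optimal power $p-1$ requires exactly one application of H\"older with the conjugate pair $(p,p/(p-1))$ at the right step, tuned so that the geometric-series factor sits on the $c_k^{p/(p-1)}$ side while the overlap bound on the Gagliardo seminorms sits on the $d_k^p$ side.
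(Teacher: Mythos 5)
The paper does not actually prove this statement: Theorem~\ref{basdcfzrty65} is imported verbatim from~\cite{brasco2024morrey} (it is a combination of their Lemmas~5.1 and~2.7 and equation~8.2), so there is no in-paper proof to compare yours against, and I can only assess your argument on its own merits. Your overall strategy (sharp fractional Poincar\'e plus dyadic chaining plus one well-placed H\"older) is a sensible classical route, and you correctly identify the two delicate points, namely the factor $(1-s)$ and the exponent $p-1$ on $(sp-n)$. However, the step on which the whole improvement from $(sp-n)^{-p}$ to $(sp-n)^{-(p-1)}$ rests is wrong as written. You claim that $\sum_k [u]^p_{W^{s,p}(B_k^x)}$ ``collapses to $C[u]^p_{W^{s,p}(B_2(x_0))}$ by the finite overlap of the nested balls $B_k^x$.'' Nested balls $B_{2^{-k}r}(x)$ shrinking to the point $x$ do \emph{not} have uniformly bounded overlap: a pair $(z,w)$ lies in $B_k^x\times B_k^x$ for every $k\leq \log_2\bigl(r/\max\{|z-x|,|w-x|\}\bigr)$, so
\begin{equation*}
\sum_{k\geq 0}[u]^p_{W^{s,p}(B_k^x)}=\int\!\!\int \frac{|u(z)-u(w)|^p}{|z-w|^{n+sp}}\,N(z,w)\,dz\,dw,\qquad N(z,w)\approx \log_2\frac{r}{\max\{|z-x|,|w-x|\}},
\end{equation*}
and the multiplicity $N$ is unbounded near the center. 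Hence $\sum_k d_k^p$ is not controlled by $(1-s)[u]^p_{W^{s,p}(B_2(x_0))}$, the H\"older step does not close, and falling back to Minkowski yields only $(sp-n)^{-p}$, exactly the loss you were trying to avoid. Recovering the sharp power $p-1$ requires a genuinely different mechanism (in the local case it comes from putting the singular weight $|x-z|^{-(n-1)}$ of the representation formula into the $L^{p'}$ factor of H\"older; a fractional analogue of that, or the actual argument of~\cite{brasco2024morrey}, is needed here).

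A secondary but real issue is your justification of the $(1-s)$-weighted Poincar\'e inequality. The Jensen-plus-kernel-insertion argument you sketch gives
$\int_{B_\rho}|u-u_{B_\rho}|^p\leq C_{n,p}\,\rho^{sp}[u]^p_{W^{s,p}(B_\rho)}$ \emph{without} the factor $(1-s)$: bounding $|x-y|^{n+sp}$ by $(2\rho)^{n+sp}$ produces no smallness as $s\to1$, and the ``mass of the normalized kernel carries a $(1-s)$'' is not something that argument delivers. The inequality with the $(1-s)$ prefactor is true, but it is itself a nontrivial result (essentially Lemma~2.7 of~\cite{brasco2024morrey}) and must either be cited or proved by a more careful multi-scale argument. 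So as it stands your proposal establishes the estimate only with $(1-s)$ missing and with $(sp-n)^{-(p-1)}$ replaced by $(sp-n)^{-p}$, which is strictly weaker than the statement and, in particular, would not suffice for the uniform-in-$s$ applications in Corollary~\ref{bmrlsadcbl}.
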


\begin{proof}[Proof of Corollary~\ref{bmrlsadcbl}]
First, we assume that $n\geq 2$. Since $u\in X^{s,p}(\Omega)$ is a minimizer of $\mathcal{E}_s^p$ we can apply Theorem~A.1 in~\cite{DFVERPP} and obtain that $u\in C_{\textit{loc}}^\alpha(\Omega)$, where $\alpha\in (0,1)$ depends only on $n$, $p$ and $s_0$. 

Furthermore, there exists a constant $C_{s_0,n,p}$ such that, for every $x_0\in B_R$,
\begin{equation}\label{Stimaholdercozzi}
\left[u\right]_{C^{\alpha}\left(B_{1}(x_0)\right)} \leq C_{s_0,n,p} \left(\left\|u\right\|_{L^\infty(B_4(x_0))}+\mbox{Tail}(u,x_0,4)+\left\|W\right\|_{L^\infty(B_8(x_0))}^\frac{1}{p}\right).
\end{equation} 
Using the fact that $\left\|u\right\|_{L^\infty(\R^n)}\leq 1$ we obtain that 
\begin{equation*}
\begin{split}
\mbox{Tail}(u,x_0,2) &\leq  \left[(1-s)2^{sp} \int_{\R^n\setminus B_2(x_0)}\frac{dy}{\left|y-x_0\right|^{n+sp}}\right]^\frac{1}{p-1}\\
&=\left[\frac{(1-s)}{sp}\omega_{n-1}\right]^\frac{1}{p-1}\\
&\leq \tilde{C}_{s_0,n,p}. 
\end{split}
\end{equation*}
Therefore, it follows from~\eqref{potential} and~\eqref{Stimaholdercozzi} that 
\begin{equation*}
\left[u\right]_{C^{\alpha}(B_1(x_0))} \leq C_{s_0,n,p}\left(1+\tilde{C}_{s_0,n,p}+2^\frac{m}{p}\Lambda^\frac{1}{p}\right) \leq C_{n,s_0,p,m,\Lambda}. 
\end{equation*}
In particular, bearing in mind~\eqref{gliorteitbergf}, we have that
\begin{equation*}
\left[u\right]_{B,\alpha}\leq C_{n,s_0,p,m,\Lambda}.
\end{equation*} 
Thanks to this last estimate and Theorem~\ref{CapVieABall} we conclude the proof for the case $n\geq 2$.

Now, it is only left to show the case $n=1$. We observe that $u\in W_{\textit{loc}}^{s,p}(\Omega)$ and $sp>1$. Then, thanks to this and Theorem~8.2 in~\cite{MR2944369}, we obtain that $u\in C_{\textit{loc}}^\alpha(\Omega)$ for $\alpha:=\frac{sp-1}{p}$.

In particular, we can use Theorem~\ref{basdcfzrty65} and the fact that for every $x,y\in B_1(x_0)$ it holds that $\left|x-y\right|\leq 2$ to obtain that 
\begin{equation}\label{iertopperett}
\begin{split}
\frac{1}{2}\left[u\right]_{C^{s_0-\frac{1}{p}}(B_1(x_0))}^p &\leq \left[u\right]_{C^{s-\frac{1}{p}}(B_1(x_0))}^p\\
&\leq \frac{C_{p}(1-s)}{(sp-1)^{p-1}}\int_{B_2(x_0)}\int_{B_2(x_0)}\frac{\left|u(x)-u(y)\right|^p}{\left|x-y\right|^{n+sp}}\,dx\,dy\\
&\leq \frac{C_{p}}{(sp-1)^{p-1}} \mathcal{E}_s^p\left(u,B_R\right).
\end{split}
\end{equation}
Now, we make use of the energy bound in Theorem~\ref{qteanldeeo}, and together with~\eqref{iertopperett} we obtain that 
\begin{equation*}
\frac{1}{2}\left[u\right]_{C^{s_0-\frac{1}{p}}(B_1(x_0))}^p\leq \frac{\bar{C}_1}{s(sp-1)^{p}}\leq \frac{\bar{C}_1}{s_0(s_0p-1)^p},
\end{equation*} 
where $\bar{C}_1$ is a constant depending only on $p$, $m$ and $\Lambda$. Again, bearing in mind~\eqref{gliorteitbergf} we obtain that
\begin{equation*}
\left[u\right]_{B,\alpha} \leq C_{s_0,p,m,\Lambda}. 
\end{equation*}
Thanks to this last estimate and Theorem~\ref{CapVieABall} we conclude. 
\end{proof}

Now we prove Corollary~\ref{coro-09}. 

\begin{proof}[Proof of Corollary~\ref{coro-09}]
For simplicity of notation, for every $r\in \left[2,+\infty\right)$ we denote 
\begin{equation}\label{EFFE-enac}
F(r):=\frac{\bar{C}}{s}\begin{dcases}
\frac{r^{n-sp}}{1-sp}\quad &\mbox{if}\quad s\in \left(0,\frac{1}{p}\right)\\
r^{n-1}\log(r)\quad &\mbox{if}\quad s=\frac{1}{p}\\
\frac{r^{n-1}}{sp-1}\quad &\mbox{if}\quad s\in \left(\frac{1}{p},1\right),
\end{dcases}
\end{equation}
where $\bar{C}:=\bar{C}_{n,p,m,\Lambda}\in(0,+\infty)$ is the constant introduced in~\eqref{BFAOTE}. 

Thanks to~\eqref{condiWzione} and~\eqref{BFAOTE} we have that, for every $r\in [2,+\infty)$ such that $B_{r+2}\subset \Omega$,
\begin{equation}\label{diorama-manna}
\begin{split}
\int_{B_r\cap \left\lbrace \theta_*<u\leq \theta^* \right\rbrace}\left|1+u(x)\right|^m\,dx &\leq \lambda_{\theta^*}^{-1} \int_{B_r\cap \left\lbrace \theta_*<u\leq \theta^* \right\rbrace}W(u(x))\,dx\\
&\leq \lambda_{\theta^*}^{-1}\mathcal{E}_s^p(u,B_r)\leq \lambda_{\theta*}^{-1}F(r). 
\end{split}
\end{equation}

Also, for every $r\geq 2$, 
\begin{equation}\label{rangeerre}
4r\geq r+2. 
\end{equation}
Thus, applying Theorem~\ref{th:fracp>=2} to $u$ with $\epsilon=0$ and making use of~\eqref{diorama-manna} and~\eqref{rangeerre}, we obtain that there exist~$R^*:=R_{s,n,m,p,c_1,\theta_*,r_0}^*\in [r_0,+\infty)$ and $\tilde{c}:=\tilde{c}_{s,n,m,p,c_1,\theta_*,r_0,c_0}\in (0,1)$ such that, for every $r\geq \max \left\lbrace 2, R^* \right\rbrace$ satisfying $B_{4r}\subset \Omega$, 
\begin{equation*}
\begin{split}
\tilde{c}r^n &\leq c \int_{B_r\cap \left\lbrace \theta_*<u\leq \theta^* \right\rbrace} \left|1+u(x)\right|^m\,dx+\mathcal{L}^n\left(B_r\cap \left\lbrace u>\theta_2\right\rbrace\right)\\
&\leq \lambda_{\theta_*}^{-1}cF(r)+ \mathcal{L}^n\left(B_r\cap \left\lbrace u>\theta_2\right\rbrace\right).
\end{split}
\end{equation*}
From this and equation~\eqref{EFFE-enac}, the claim in~\eqref{benzina} readily follows.  

Now, if $s_0\in \left(\frac{1}{p},1\right)$ and $s\in \left[s_0,1\right)$ we can apply Corollary~\ref{bmrlsadcbl} to $u$. Therefore, making use of~\eqref{lietoca},~\eqref{EFFE-enac},~\eqref{diorama-manna} and~\eqref{rangeerre},
we deduce that there exist $R^*:=R_{s_0,n,m,p,c_1,\theta_*,r_0}^*\in\left[r_0,+\infty\right)$, $\tilde{c}:=\tilde{c}_{s_0,n,p,m,\Lambda,c_1,\theta_*,r_0,c_0}\in (0,1)$ and $c:=c_{m,\theta_*}\in(0,+\infty)$ such that, for every $r\geq \max\left\lbrace 2,R^* \right\rbrace$ satisfying $B_{4r}\subset\Omega$,
\begin{equation*}
\begin{split}
\tilde{c}r^n &\leq c \int_{B_r\cap \left\lbrace \theta_*<u\leq \theta^* \right\rbrace} \left|1+u(x)\right|^m\,dx+\mathcal{L}^n\left(B_r\cap \left\lbrace u>\theta_2\right\rbrace\right)\\
&\leq \lambda_{\theta_*}^{-1}c\frac{\bar{C}r^{n-1}}{s_0(s_0p-1)}+ \mathcal{L}^n\left(B_r\cap \left\lbrace u>\theta_2\right\rbrace\right).
\end{split}
\end{equation*}
From this equation, claim~\eqref{cala} plainly follows. 
\end{proof}

\section{Proof of Theorems~\ref{12c-dgbnbc5543} \&~\ref{viredghloprec45}}\label{nmmdqcstngiga}

The proof of Theorem~\ref{12c-dgbnbc5543} relies the $\Gamma$-convergence of the Gagliardo seminorm, which has already been discussed in~\cite{MR2033060} and~\cite{MR4544090}. In particular, we follow the proof of Theorem~2.1 in~\cite{MR4544090}, where the $\Gamma$-convergence of $\mathcal{K}_s^2$ is showed for traceless functions. Here minor adjustments are needed due to the different homogeneity $p\in (1,+\infty)$ of the seminorm and the relaxed condition on the trace. 

In what follows we will adopt the following notation. For any $x\in\R^n$ and $A\subset\R^n$ we define 
\begin{equation*}
d(x,A):=\inf_{y\in A}\left|x-y\right|.
\end{equation*} 
Furthermore, for any $\epsilon\in (0,+\infty)$  we set
\begin{equation}\label{setsepsilon}
\Omega_{\epsilon}^{+}:=\left\lbrace x\in\R^n\mbox{  s.t.  } d(x,\Omega)\leq \epsilon \right\rbrace\quad\mbox{and}\quad\Omega_{\epsilon}^{-}:=\left\lbrace x\in \Omega\mbox{  s.t.  }d(x,\partial \Omega)\geq \epsilon \right\rbrace.
\end{equation}
Note that $\Omega_{\epsilon}^{-}$ might be empty for large values of $\epsilon$. Furthermore, given any measurable $g:\Omega\to \R$ we adopt the following notation for its zero extension in $\R^n\setminus \Omega$
\begin{equation*}
\bar{g}(x):=\begin{dcases}
g(x)\quad &\mbox{if}\quad x\in\Omega\\
0\quad &\mbox{if}\quad x\in \R^n\setminus \Omega.
\end{dcases}
\end{equation*}
Moreover, for every  $h\in\R^n$ and $f:\R^n\to \R$ measurable we consider the translation operator defined by 
\begin{equation*}
\tau_h f(\cdot):=f(\cdot+h).
\end{equation*}

We are now ready for the proof of Theorem~\ref{12c-dgbnbc5543}, which will be divided into
its specific parts.

\begin{proof}[Proof of Theorem~\ref{12c-dgbnbc5543}(i)]
We make use of Proposition~\ref{hndvcfwslop8} and obtain that, for every $h\in\R^n$,
\begin{equation}\label{rdvsslcnfofcm}
\begin{split}
\left\| \tau_h u_{s_k} -u_{s_k}   \right\|_{L^p\left(\Omega_{\left|h\right|}^-\right)}^p&=\int_{\Omega_{\left|h\right|}^{-}}\left|u_{s_k}(h+y)-u_{s_k}(y)\right|^p\,dy\\ 
&\leq  \bar{C}_{n,p}\left|h\right|^{s_k p}(1-s_k)\int_{B_{\left|h\right|}}\int_{\Omega}\frac{\left|u_{s_k}(x+y)-u_{s_k}(x)\right|^p}{\left|y\right|^{n+s_k p}}\,dx\,dy\\
&\leq \bar{C}_{n,p}\left|h\right|^{s_k p}(1-s_k)\int_{\Omega_{\left|h\right|}^+}\int_{\Omega}\frac{\left|u_{s_k}(y)-u_{s_k}(x)\right|^p}{\left|x-y\right|^{n+s_kp}}\,dx\,dy\\
&\leq \bar{C}_{n,p}\left|h\right|^{s_k p}2M,
\end{split}
\end{equation}
where $\bar{C}_{n,p}\in(0,+\infty)$ is a constant depending only on $n$ and $p$. 

As a result,
\begin{equation}\label{v654bgr32}
\begin{split}
\left\|\tau_h \bar{u}_{s_k}-\bar{u}_{s_k}\right\|_{L^p(\R^n)}^p&=\int_{\R^n}\left|\bar{u}_{s_k}(h+y)-\bar{u}_{s_k}(y)\right|^p\,dy\\
&\leq \int_{\Omega_{\left|h\right|}^{-}}\left|u_{s_k}(h+y)-u_{s_k}(y)\right|^p\,dy+2^p\int_{\Omega_{\left|h\right|}^{+}\setminus \Omega_{\left|h\right|}^{-}}\left\|u_{s_k}\right\|_{L^\infty(\R^n)}\,dy\\    
&\leq \bar{C}_{n,p}\left|h\right|^{s_k p}2M +2^p\left|\Omega_{\left|h\right|}^{+}\setminus \Omega_{\left|h\right|}^{-}\right|.
\end{split}
\end{equation}
Also, we notice that, as a consequence of the Dominated Convergence Theorem, 
\begin{equation*}
\lim_{h\to 0^+} \left|\Omega_{\left|h\right|}^{+}\setminus \Omega_{\left|h\right|}^{-}\right|=0.
\end{equation*}
Thanks to this and~\eqref{v654bgr32}, we can apply the
Fr\'echet-Kolmogorov to $\bar{u}_{s_k}$, see for instance Theorem~4.26 in~\cite{MR2759829}, and obtain that there exists~$u\in L^p(\Omega)$ such that, up to a subsequence,
\begin{equation}\label{crider234}
\lim_{k\to +\infty}\left\|u_{s_k}-u\right\|_{L^p(\Omega)}=0.
\end{equation} 
Therefore, taking the limit as $k\to +\infty$ in~\eqref{rdvsslcnfofcm}, it follows from~\eqref{crider234} that, for every $h\in\R^n$,
\begin{equation*}
\left\|\tau_h u-u\right\|_{L^p\left(\Omega_{\left|h\right|}^{-}\right)}\leq \left(2M\bar{C}_{n,p}\right)^\frac{1}{p}\left|h\right|. 
\end{equation*}
In particular, employing Proposition~9.3  in~\cite{MR2759829} we deduce that $Du\in L^p(\Omega)$ and therefore $u\in W^{1,p}(\Omega)$. 

Now we assume that~$u_{s_k}\in X_0^{s_k,p}(\Omega)$ for every $k\in\N$. Then, $u_{s_k}\equiv \bar{u}_{s_k}$. In particular, we obtain that 
\begin{equation*}
\mathcal{K}_{s_k}^p(u_{s_k},\Omega)=\frac{1}{2}\left[u_{s_k}\right]_{W^{s_k,p}(\Omega)}^p. 
\end{equation*}
It follows from this,~\eqref{kkkkfvetrre} and Proposition~\ref{hndvcfwslop8} that, for every open and bounded $\Omega'\subset\R^n$ and $h\in\R^n$, 
\begin{equation}\label{mpddmaepytduba}
\left\| \tau_h u_{s_k}-u_{s_k}  \right\|_{L^p(\Omega')} \leq \left(2\bar{C}_{n,p}M\right)^\frac{1}{p}\left|h\right|^{s_k}. 
\end{equation}

Consequently, by Theorem~4.26 in~\cite{MR2759829} it holds that $u_{s_k}\to u$ in $L^p(\R^n)$, where $u\equiv 0$ in $\R^n\setminus \Omega$. By taking the limit as~$k\to +\infty$ in~\eqref{mpddmaepytduba}, we obtain  that
\begin{equation*}
\left\| \tau_h u-u \right\|_{L^p(\Omega')}\leq \left(2\bar{C}_{n,p}M\right)^\frac{1}{p} \left|h\right|. 
\end{equation*}  
From this and Proposition~9.3 in~\cite{MR2759829} it follows that $Du\in L^p(\R^n)$. Since $u\equiv 0$ in $\R^n\setminus \Omega$ and $\partial \Omega$ is Lipschitz, we find that $u\in W_0^{1,p}(\Omega)$. This concludes the proof of Theorem~\ref{12c-dgbnbc5543}(i).
\end{proof}

\begin{proof}[Proof of Theorem~\ref{12c-dgbnbc5543}(ii)]
Since $W\in L^\infty((-1,1))$, with no loss of generality we can suppose that there exists~$M_1\in(0,+\infty)$ such that
\begin{equation}\label{ibfve54904}
\liminf_{k\to +\infty}(1-s_k)\mathcal{K}_{s_k}^p(u_{s_k},\Omega)\leq M_1.
\end{equation} 
In particular, up to a subsequence, we can assume that~\eqref{kkkkfvetrre} holds true. Hence, it follows from Theorem~\ref{12c-dgbnbc5543}(i) that $u\in W^{1,p}(\Omega)$.  

Now, let $v \in L^p(\R^n)$ and $\eta\in C_c^\infty(B_1)$ be a standard mollifier such that
\begin{equation*}
\eta\geq 0\quad\mbox{and}\quad \int_{B_1}\eta(x)\,dx=1.
\end{equation*}
Also, for $\epsilon\in (0,1)$ we consider $\eta_\epsilon(\cdot):=\eta \left(\frac{\cdot}{\epsilon}\right)$ and  we define the convolution $v_\epsilon:=v\ast \eta_\epsilon$. 

Then, we make the following claims. For every $s\in(0,1)$ and $p\in(1,+\infty)$, 
\begin{equation}\label{Claim1}
\mathcal{K}_{s}^p(v,\Omega)\geq \frac{1}{2}\int_{\Omega_{\epsilon}^{-}}\int_{\R^n}\frac{\left|v_\epsilon(x)-v_\epsilon(y)\right|^p}{\left|x-y\right|^{n+s p}}\,dy\,dx
\end{equation}
and
\begin{equation}\label{Claim2}
\frac{\left|\partial B_1\right|}{p}\liminf_{k\to +\infty} \int_{\Omega_{\epsilon}^{-}} \left|\nabla u_{s_k,\epsilon}(x)\right|^p \,dx\leq \liminf_{k\to +\infty}(1-s_k) \int_{\Omega_\epsilon^{-}}\int_{\R^n} \frac{\left|u_{s_k,\epsilon}(x)-u_{s_k,\epsilon}(y)\right|^p}{\left|x-y\right|^{n+s_kp}}\,dx\,dy. 
\end{equation}
Suppose for the moment that claims~\eqref{Claim1} and~\eqref{Claim2} hold true. Then, we obtain that 
\begin{equation}\label{copertone}
\begin{split}
\liminf_{k\to +\infty} (1-s_k)\mathcal{K}_{s_k}^p(u_{s_k},\Omega) &\geq \frac{\left|\partial B_1\right|}{2p} \liminf_{k\to +\infty}\int_{\Omega_\epsilon^{-}} \left|\nabla u_{s_k,\epsilon}\right|^p\,dx. 
\end{split}
\end{equation}
Also, we observe that $u_{s_k,\epsilon}\to u_\epsilon:=u\ast \eta_\epsilon$ in $L^p(\Omega_{\epsilon}^-)$. Now, according to~\eqref{ibfve54904} and~\eqref{copertone}, we obtain that up to a subsequence $u_{s_k,\epsilon}$ is  uniformly bounded in $W^{1,p}(\Omega_{\epsilon}^{-})$ with respect to $k$. 

For this reason, again up to a subsequence, we see that
\begin{equation}\label{galoe}
u_{s_k,\epsilon}\rightharpoonup u_{\epsilon} \quad\mbox{weakly in}\quad W^{1,p}(\Omega_{\epsilon}^{-}).  
\end{equation} 
Hence, we infer from~\eqref{copertone} and~\eqref{galoe} that 
\begin{equation*}
\liminf_{k\to +\infty} \mathcal{K}_{s_k}^p(u_{s_k},\Omega)\geq \frac{\left|\partial B_1\right|}{2p}\int_{\Omega_{\epsilon}^{-}}\left|\nabla u_{\epsilon}\right|^p\,dx. 
\end{equation*}
Since $u\in W^{1,p}(\Omega)$, taking the limit with respect to $\epsilon\to 0^+$ we obtain that 
\begin{equation}\label{arglba}
\liminf_{k\to +\infty}(1-s_k) \mathcal{K}_{s_k}^p(u_{s_k},\Omega)\geq\frac{\left|\partial B_1\right|}{2p}\int_{\Omega}\left|\nabla u(x)\right|^p\,dx=\mathcal{K}_1^p(u,\Omega).
\end{equation} 

We also notice that since $u_{s_k}\to u$ in $L^p(\Omega)$, then, up to a subsequence,
$u_{s_k}\to u$ a.e. in $\Omega$. Since $\left|u_{s_k}\right|\leq 1$ a.e. in $\Omega$ it follows that $\left|u\right|\leq 1$ a.e. in $\Omega$. Besides, making use of Fatou's Lemma and of
the lower semicontinuity of $W$ (see equation~\eqref{po12345}) we obtain that
\begin{equation}\label{iifvd43slop}
\liminf_{k\to +\infty}\int_{\Omega} W(u_{s_k}(x))\,dx\geq \int_{\Omega} \liminf_{k\to +\infty} W(u_{s_k}(x))\,dx\geq \int_{\Omega} W(u(x))\,dx.  
\end{equation}
Finally, from~\eqref{arglba} and~\eqref{iifvd43slop} we obtain~\eqref{utrc5342}. 

Hence, it is only left to show claims~\eqref{Claim1} and~\eqref{Claim2}.

To prove~\eqref{Claim1} we argue as follows.
Using Jensen inequality we obtain that 
\begin{equation*}
\begin{split}
&\frac{1}{2}\int_{\Omega_{\epsilon}^{-}}\int_{\R^n}\frac{\left|v_\epsilon(x)-v_\epsilon(y)\right|^p}{\left|x-y\right|^{n+sp}}\,dy\,dx\\
= &\frac{1}{2}\int_{\Omega_{\epsilon}^{-}}\int_{\R^n} \frac{\left|\int_{\R^n}\left(v(x-z)-v(y-z)\right)\eta_\epsilon(z)\,dz\right|^p}{\left|x-y\right|^{n+sp}}\,dx\,dy\\
\leq  &\frac{1}{2}\int_{\Omega_{\epsilon}^{-}}\int_{\R^n}\int_{\R^n} \frac{\left|v(x-z)-v(y-z)\right|^p}{\left|x-y\right|^{n+sp}}\eta_{\epsilon}(z)\,dz\,dx\,dy\\
= &\frac{1}{2}\int_{B_\epsilon} \int_{-z+\Omega_{\epsilon}^{-}}\int_{\R^n}\frac{\left|v(x)-v(y)\right|^p}{\left|x-y\right|^{n+sp}}\eta_{\epsilon}(z)\,dx\,dy\,dz\\
\leq & \frac{1}{2}\int_{B_\epsilon} \int_{\Omega}\int_{\R^n}\frac{\left|v(x)-v(y)\right|^p}{\left|x-y\right|^{n+sp}}\eta_{\epsilon}(z)\,dx\,dy\,dz\\
=&\frac{1}{2} \int_{\Omega}\int_{\Omega}\frac{\left|v(x)-v(y)\right|^p}{\left|x-y\right|^{n+sp}}\,dx\,dy+ \frac{1}{2}\int_{\Omega}\int_{\R^n\setminus \Omega}\frac{\left|v(x)-v(y)\right|^p}{\left|x-y\right|^{n+sp}}\,dx\,dy\\
\leq & \mathcal{K}_s^p(v,\Omega).
\end{split}
\end{equation*}

The proof of~\eqref{Claim1} is thereby complete and we now turn to the
proof of~\eqref{Claim2}. For this,
let $x,y\in \R^n$ and $\left|x-y\right|<1$. Using Taylor's Theorem with the Lagrange remainder, we obtain that, for some $\xi \in B_{1}(x)$,
\begin{equation}\label{tedodoro}
\begin{split}
\left|u_{s_k,\epsilon}(x)-u_{s_k,\epsilon}(y)\right|^p=&\left|\nabla u_{s_k,\epsilon} (x) \cdot(x-y)+D^2u_{s_k,\epsilon}(\xi)\cdot(x-y)^2 \right|^p\\
\geq &\left|\left|\nabla u_{s_k,\epsilon} (x) \cdot(x-y)\right|-\left|D^2u_{s_k,\epsilon}(\xi)\cdot(x-y)^2 \right|\right|^p.
\end{split}
\end{equation}
Now, we distinguish between the cases \begin{equation}\label{ca;1}
\left|\nabla u_{s_k,\epsilon} (x) \cdot(x-y)\right|\leq \left|D^2u_{s_k\epsilon}(\xi)\cdot(x-y)^2 \right|\end{equation}
and \begin{equation}\label{ca;2}
\left|\nabla u_{s_k,\epsilon} (x) \cdot(x-y)\right|\geq \left|D^2u_{s_k\epsilon}(\xi)\cdot(x-y)^2 \right|.\end{equation}

When~\eqref{ca;1} holds true, making use of the convexity of $\R^+\ni t \to t^p$ and~\eqref{tedodoro} we obtain that 
\begin{equation*}
\begin{split}
\left|u_{s_k,\epsilon}(x)-u_{s_k,\epsilon}(y)\right|^p &\geq \left|\nabla u_{s_k,\epsilon} (x) \cdot(x-y)\right|^p-\left|D^2u_{s_k,\epsilon}(\xi)\cdot(x-y)^2 \right|^p\\
&\geq \left|\nabla u_{s_k,\epsilon} (x) \cdot\frac{(x-y)}{\left|x-y\right|}\right|^p\left|x-y\right|^p-\left\|u_{s_k,\epsilon}\right\|_{C^2(\R^n)}^p\left|x-y\right|^{2p}
\end{split}
\end{equation*}
Now, since $\left\|u_{s_k}\right\|_{L^\infty(\R^n)}\leq 1$, we obtain that 
\begin{equation}\label{ohgbrelmvdfre36}
\left\|u_{s_k,\epsilon}\right\|_{C^2(\R^n)}\leq \left\|\eta_\epsilon \right\|_{W^{2,1}(\R^n)}.
\end{equation} 
As a result, if $\left|\nabla u_{s_k,\epsilon} (x) \cdot(x-y)\right|\leq \left|D^2u_{s_k,\epsilon}(\xi)\cdot(x-y)^2 \right|$ we obtain that
\begin{equation}\label{Euno1}
\begin{split}
\left|u_{s_k,\epsilon}(x)-u_{s_k,\epsilon}(y)\right|^p &\geq \left|\nabla u_{s_k,\epsilon} (x) \cdot\frac{(x-y)}{\left|x-y\right|}\right|^p\left|x-y\right|^p-\left\|\eta_\epsilon\right\|_{W^{2,1}(\R^n)}^p\left|x-y\right|^{2p}\\
&\geq \left|\nabla u_{s_k,\epsilon} (x) \cdot\frac{(x-y)}{\left|x-y\right|}\right|^p\left|x-y\right|^p-\left\|\eta_\epsilon\right\|_{W^{2,1}(\R^n)}^p\left|x-y\right|^{p+1},
\end{split}
\end{equation}as desired.

If instead~\eqref{ca;2} holds true, we use equation~\eqref{Bolo}  to estimate  
\begin{equation}\label{unrefdvgt56432}
\left|u_{s_k,\epsilon}(x)-u_{s_k,\epsilon}(y)\right|^p \geq \sum_{j=0}^{+\infty}\begin{pmatrix}p\\j \end{pmatrix}\left|\nabla u_{s_k,\epsilon} (x) \cdot(x-y)\right|^{p-j}\left|-D^2u_{s_k,\epsilon}(\xi)\cdot(x-y)^2 \right|^{j}. 
\end{equation}
Consequently,
\begin{equation}\label{iknfdcerty}
\begin{split}
&\sum_{j=0}^{+\infty} \begin{pmatrix}p\\j \end{pmatrix}\left|\nabla u_{s_k,\epsilon} (x) \cdot(x-y)\right|^{p-j}\left|-D^2u_{s_k,\epsilon}(\xi)\cdot(x-y)^2 \right|^{j}\\
\geq &\left|\nabla u_{s_k,\epsilon} (x) \cdot\frac{(x-y)}{\left|x-y\right|}\right|^p\left|x-y\right|^p-\sum_{j=1}^{[p]}\left| \begin{pmatrix}p\\j \end{pmatrix}\right|\left|\nabla u_{s_k,\epsilon} (x) \cdot(x-y)\right|^{p-j}\left|-D^2u_{s_k,\epsilon}(\xi)\cdot(x-y)^2 \right|^{j}\\
&-\sum_{j=[p]+1}^{+\infty}\left| \begin{pmatrix}p\\j \end{pmatrix}\right|\left|\nabla u_{s_k,\epsilon} (x) \cdot(x-y)\right|^{p-j}\left|-D^2u_{s_k,\epsilon}(\xi)\cdot(x-y)^2 \right|^{j}.
\end{split}
\end{equation}
Moreover, we observe that
\begin{equation}\label{KGBFTERFD}
\begin{split}
\sum_{j=1}^{[p]} &\left| \begin{pmatrix}p\\j \end{pmatrix}\right|\left|\nabla u_{s_k,\epsilon} (x) \cdot(x-y)\right|^{p-j}\left|-D^2u_{s_k,\epsilon}(\xi)\cdot(x-y)^2 \right|^{j} \\
\leq & \sum_{j=1}^{\left[p\right]} \left|\begin{pmatrix}p\\j \end{pmatrix}\right|\left|\nabla u_{s_k,\epsilon} (x) \cdot(x-y)\right|^{p-j}\left\|u_{s_k\epsilon}\right\|_{C^2(\R^n)}^{j}\left|x-y\right|^{2j}\\
\leq & \sum_{j=1}^{\left[p\right]} \left|\begin{pmatrix}p\\j \end{pmatrix}\right|\left\| u_{s_k,\epsilon}\right\|_{C^1(\R^n)}^{p-j} \left\|u_{s_k,\epsilon}\right\|_{C^2(\R^n)}^{j}\left|x-y\right|^{p+j}\\
\leq & \left\| u_{s_k,\epsilon}\right\|_{C^2(\R^n)}^{p}\left|x-y\right|^{p+1}\sum_{j=1}^{\left[p\right]} \left|\begin{pmatrix}p\\j \end{pmatrix}\right|.
\end{split}
\end{equation}
Also,
\begin{equation}\label{BBB-UJNER}
\begin{split}
&\sum_{j=\left[p\right]+1}^{+\infty} \left|\begin{pmatrix}p\\j \end{pmatrix}\right|\left|\nabla u_{s_k,\epsilon} (x) \cdot(x-y)\right|^{p-j}\left|D^2u_{s_k\epsilon}(\xi)\cdot(x-y)^2 \right|^{j}\\
\leq & \sum_{j=\left[p\right]+1}^{+\infty}\left|\begin{pmatrix}p\\j \end{pmatrix}\right|\left|D^2u_{s_k,\epsilon}(\xi)\cdot(x-y)^2 \right|^{p}\\
\leq & \left\|u_{s_k,\epsilon} \right\|_{C^2(\R^n)}^{p}\left|x-y\right|^{2p}\sum_{j=\left[p\right]+1}^{+\infty}\left|\begin{pmatrix}p\\j \end{pmatrix}\right|. 
\end{split}
\end{equation}
Therefore, if we set 
\begin{equation*}
C_{p}:=\max \left\lbrace \sum_{j=\left[p\right]+1}^{+\infty}\left|\begin{pmatrix} p\\j \end{pmatrix}\right|,\sum_{j=1}^{\left[p\right]} \left|\begin{pmatrix}p\\j \end{pmatrix}\right|\right\rbrace\quad\mbox{and}\quad C_{p,\epsilon}:=2C_{p}\left\|\eta_\epsilon \right\|_{W^{2,1}(\R^n)}^p,
\end{equation*}
making use of~\eqref{ohgbrelmvdfre36},~\eqref{iknfdcerty},~\eqref{KGBFTERFD} and~\eqref{BBB-UJNER} we deduce that 
\begin{equation*}
\begin{split}
& \sum_{j=0}^{+\infty} \begin{pmatrix}p\\j \end{pmatrix}\left|\nabla u_{s_k,\epsilon} (x) \cdot(x-y)\right|^{p-j}\left|-D^2u_{s_k,\epsilon}(\xi)\cdot(x-y)^2 \right|^{j}\\
\geq & \left|\nabla u_{s_k,\epsilon} (x) \cdot\frac{(x-y)}{\left|x-y\right|}\right|^p\left|x-y\right|^p-C_{p}\left\| u_{s_k,\epsilon}\right\|_{C^2(\R^n)}^{p} \left(\left|x-y\right|^{p+1}+\left|x-y\right|^{2p}\right)\\
\geq & \left|\nabla u_{s_k,\epsilon} (x) \cdot\frac{(x-y)}{\left|x-y\right|}\right|^p\left|x-y\right|^p-2C_{p}\left\|\eta_\epsilon \right\|_{W^{2,1}(\R^n)}^p \left|x-y\right|^{p+1}\\
=&\left|\nabla u_{s_k,\epsilon} (x) \cdot\frac{(x-y)}{\left|x-y\right|}\right|^p\left|x-y\right|^p-C_{p,\epsilon} \left|x-y\right|^{p+1}.
\end{split}
\end{equation*} 
Hence, as a consequence of~\eqref{Euno1},~\eqref{unrefdvgt56432} and the fact that $C_p\geq 1$,
we have that, for every $x,y\in\R^n$ such that $\left|x-y\right|<1$,
\begin{equation*}
\left|u_{s_k,\epsilon}(x)-u_{s_k,\epsilon}(y)\right|^p\geq \left|\nabla u_{s_k,\epsilon} (x) \cdot\frac{(x-y)}{\left|x-y\right|}\right|^p\left|x-y\right|^p-C_{p,\epsilon} \left|x-y\right|^{p+1}.
\end{equation*}
Therefore, for every $x\in \Omega_{\epsilon}^{-}$,
\begin{equation*}
\begin{split}
(1-s_k)\int_{\R^n} \frac{\left|u_{s_k,\epsilon}(x)-u_{s_k,\epsilon}(y)\right|^p}{\left|x-y\right|^{n+s_k p}}\,dy \geq &  (1-s_k) \int_{B_1(x)} \frac{\left|u_{s_k,\epsilon}(x)-u_{s_k,\epsilon}(y)\right|^p}{\left|x-y\right|^{n+s_k p}}\,dy\\
\geq & (1-s_k)\int_{B_1(x)}\left|\nabla u_{s_k,\epsilon} (x) \cdot\frac{(x-y)}{\left|x-y\right|}\right|^p\left|x-y\right|^{p(1-s_k)-n}\,dy\\
&-(1-s_k)C_{p,\epsilon}\int_{B_1(x)}\frac{dy}{\left|x-y\right|^{n+s_k p-p-1}}. 
\end{split}
\end{equation*} 

We notice that, for every $z \in\partial B_1$,
\begin{equation*}
\int_{\partial B_1}\left|z\cdot \omega\right|^p\,dH_\omega^{n-1}= \left|z\right|^p\int_{\partial B_1}\left|\omega_1\right|^p\,dH_{\omega}^{n-1}=\left|z\right|^p K_{n,p}. 
\end{equation*}
We thereby deduce that 
\begin{equation*}
\begin{split}
&\int_{B_1(x)}\left|\nabla u_{s_k,\epsilon} (x) \cdot\frac{(x-y)}{\left|x-y\right|}\right|^p\left|x-y\right|^{p(1-s_k)-n}\,dy\\
=&\int_{0}^1\int_{\partial B_1} \left|\nabla u_{s_k,\epsilon} (x) \cdot \omega \right|^p t^{p(1-s_k)-1}\,dH_{\omega}^{n-1}\,dt\\
=&K_{n,p}\left|\nabla u_{s_k,\epsilon}(x)\right|^p\int_{0}^1 t^{p(1-s_k)-1}\,dt\\
=&\frac{K_{n,p}}{p(1-s_k)}\left|\nabla u_{s_k,\epsilon}(x)\right|^p.
\end{split}
\end{equation*}
Thus, for every $x\in \Omega_{\epsilon}^{-}$,
\begin{equation*}
(1-s_k)\int_{\R^n} \frac{\left|u_{s_k,\epsilon}(x)-u_{s_k,\epsilon}(y)\right|^p}{\left|x-y\right|^{n+s_k p}}\,dy\geq \frac{K_{n,p}}{p}\left|\nabla u_{s_k,\epsilon}(x)\right|^p-(1-s_k)C_{p,\epsilon}\frac{\left|\partial B_1\right|}{p-s_k p+1}.
\end{equation*}
We thus integrate both sides of the inequality in $x\in\Omega_{\epsilon}^-$ and we take the $\liminf$  to obtain 
\begin{equation*}
\liminf_{k\to +\infty} (1-s_k)\int_{\Omega_{\epsilon}^{-}} \int_{\R^n}\frac{\left|u_{s_k,\epsilon}(x)-u_{s_k,\epsilon}(y)\right|^p}{\left|x-y\right|^{n+s_kp}}\,dy\,dx\geq \liminf_{k\to +\infty}\frac{K_{n,p}}{p}\int_{\Omega_{\epsilon}^{-}} \left|\nabla u_{s_k,\epsilon}(x)\right|^p\,dx
\end{equation*}
and this concludes the proof of~\eqref{Claim2}.\qedhere 
\end{proof}

\begin{proof}[Proof of Theorem~\ref{12c-dgbnbc5543}(iii)]
We assume with no loss of generality that $u\in W^{1,p}(\Omega)$. Being $\Omega$ Lipschitz, we extend $u$ to a function $\tilde{u}\in W^{1,p}(\R^n)$ such that $\tilde{u}=0$ in $\R^n\setminus B_{\frac{R}{3}}$ where $R\in (0,+\infty)$ is chosen such that $\Omega\subset B_{\frac{R}{3}}$. Then, we define 
\begin{equation*}
v:=\min\left\lbrace \max\left\lbrace \tilde{u},-1 \right\rbrace, 1  \right\rbrace. 
\end{equation*}
Notice that, by Stampacchia's Theorem,
$v\in W^{1,p}(\R^n)$. Also, $v=0$ in $B_{\frac{R}{3}}$. Then, $v$ satisfies all the assumptions of Proposition~\ref{prop-cont}. Furthermore, we have that $v\in X^{s,p}(\Omega)$ for every $s\in(0,1)$.    

Therefore, we can choose $u_{s_k}\equiv v$ and, thanks to Proposition~\ref{prop-cont}, we obtain that  
\begin{equation}\label{gddgtre5t4b}
\lim_{k\to +\infty}(1-s_k)\int_{\Omega}\int_{\R^n} \frac{\left|v(x)-v(y)\right|^p}{\left|x-y\right|^{n+s_kp}}\,dy\,dx =\frac{K_{n,p}}{p}\int_{\Omega}\left|\nabla v(x)\right|^p\,dx.
\end{equation}
Furthermore, since $u_{s_k}\equiv u$ in $\Omega$ for every $k$, we obtain 
\begin{equation}\label{oinalrev43}
\int_{\Omega}W(u_{s_k}(x))\,dx=\int_{\Omega} W(u(x))\,dx.
\end{equation}
From~\eqref{gddgtre5t4b} and~\eqref{oinalrev43} we conclude the proof of~Theorem~\ref{12c-dgbnbc5543}(iii). 
\end{proof}

As a consequence of the density estimates in Theorem~\ref{th:fracp>=2} and the $\Gamma$-convergence of the functional $\mathcal{E}_s^p$ to the free energy $\mathcal{E}_1^p$, we obtain density estimates for $\mathcal{E}_1^p$ as precisely stated in Theorem~\ref{viredghloprec45}.

\begin{proof}[Proof of Theorem~\ref{viredghloprec45}]
According to Theorem~\ref{12c-dgbnbc5543}(iii), there exists an extension $v:\R^n\to [-1,1]$ belonging to $X^{s,p}(\Omega)$ for every $s\in(0,1)$, such that
\begin{equation}\label{servcloybegr}
v\equiv u \quad\mbox{  in  }\quad \Omega
\end{equation}
and for every $\left\lbrace s_k \right\rbrace_{k\in\N}\subset (0,1)$  such that $s_k\to 1$ it holds that
\begin{equation}\label{erre0}
\mathcal{E}_1^p(u,\Omega)=\limsup_{k\to+\infty}\mathcal{E}_{s_k}^p(v,\Omega).   
\end{equation} 
From now on we let $\left\lbrace s_{k} \right\rbrace_{k\in\N}\subset(0,1)$ satisfying~$s_k\to 1$ be fixed. Moreover, up to a subsequence, we can assume that $\left\lbrace s_k \right\rbrace_{k\in\N}$ is increasing. 

Then, we claim that for every for every $\epsilon\in (0,+\infty)$ there exists a subsequence of $\left\lbrace s_k \right\rbrace_{k\in\N}$, which will not be renamed, such that, for every $h\in X_v^{s_k,p}$, 
\begin{equation}\label{onvegdret54}
\mathcal{E}_{s_k}^p(v,\Omega)\leq \epsilon+\mathcal{E}_{s_k}^p(h,\Omega). 
\end{equation}
To show~\eqref{onvegdret54}, given $k\in\N$ we consider $w_{s_k}\in X_v^{s_k,p}(\Omega)$ such that, for every $h \in X_v^{s_k,p}(\Omega)$,
\begin{equation}\label{minofwsk}
\mathcal{E}_{s_k}^p(w_{s_k},\Omega)\leq \mathcal{E}_{s_k}^p(h,\Omega).
\end{equation}
The existence of $w_{s_k}$ follows from the direct method. In particular, from this and~\eqref{erre0} it follows that 
\begin{equation}\label{feclopdgbetw6}
\mathcal{E}_{1}^p(u,\Omega)\geq \limsup_{k\to+\infty} \mathcal{E}_{s_k}^p(w_{s_k},\Omega).
\end{equation}
From this last inequality and~Theorem~\ref{12c-dgbnbc5543}(i)~and~(ii) we deduce that there exists~$w \in X^{1,p}(\Omega)$ such that $w_{s_k}\to w$ in $L^p(\Omega)$ and 
\begin{equation}\label{gliorte}
\mathcal{E}_1^p(w,\Omega)\leq \liminf_{k\to+\infty}\mathcal{E}_{s_k}^p(w_{s_k},\Omega). 
\end{equation}
Now, employing~\eqref{feclopdgbetw6} and~\eqref{gliorte} we deduce that 
\begin{equation}\label{revcghftr}
\mathcal{E}_1^p(u,\Omega)\geq \mathcal{E}_1^p(w,\Omega). 
\end{equation}
Also, as a consequence of Theorem~\ref{12c-dgbnbc5543}(i), we infer that
\begin{equation}\label{boundarycond}
u-w\in W_0^{1,p}(\Omega).
\end{equation}
From~\eqref{boundarycond},~\eqref{revcghftr} and the minimality of $u$ we obtain that 
\begin{equation*}
\mathcal{E}_1^p(u,\Omega)=\mathcal{E}_1^p(w,\Omega).
\end{equation*} 
Making use of this identity,~\eqref{erre0} and~\eqref{gliorte} we obtain that 
\begin{equation*}
\limsup_{k\to+\infty}\mathcal{E}_{s_k}^p(v,\Omega)\leq \liminf_{k\to+\infty} \mathcal{E}_{s_k}^p(w_{s_k},\Omega). 
\end{equation*}
As a consequence, bearing in mind~\eqref{minofwsk}, we deduce that~\eqref{onvegdret54} holds true and this concludes the proof of claim~\eqref{onvegdret54}. 

Now, being $u$ a minimizer for $\mathcal{E}_1^p$  in $X^{1,p}(\Omega)$, we can apply Theorem~3.1 in~\cite{MR0666107} and Theorem~6.1 in~\cite{MR0244627} and obtain that
\begin{equation}\label{Hold-54tgdbfe}
u\in C_{\textit{loc}}^\alpha(\Omega),
\end{equation}
for some $\alpha\in (0,1)$ depending only on $n$, $p$ and $m$.

Since $\Omega\neq \R^n$, we have that $C_{\textit{loc}}^\alpha(\Omega)=C_B^\alpha(\Omega)$. From this,~\eqref{Hold-54tgdbfe} and~\eqref{servcloybegr} we obtain that 
\begin{equation}\label{loperto}
v\in C_B^\alpha(\Omega)\quad\mbox{with $\alpha\in(0,1)$ depending only on $n$, $p$ and $m$.} 
\end{equation}

Given~$s_0\in\left(\frac{1}{p},1\right)$, we set 
\begin{equation*}
k_0:=\min \left\lbrace k\in\N\mbox{  s.t.  } s_k>s_0 \right\rbrace. 
\end{equation*}
Notice that being $\left\lbrace s_k \right\rbrace_{k\in\N}$ increasing, we have that~$s_k>s_0$
for every $k\geq k_0$. 
Then, we choose a subsequence of $\left\lbrace  s_k  \right\rbrace_{k\geq k_0}$ such that~\eqref{onvegdret54} is satisfied for 
$\epsilon:=\frac{\delta}{2}$, where $\delta:=\delta_{s_0,n,p,mc_1,\theta_*,c_0,\alpha, \left[v\right]_{C_B^\alpha(\Omega)}}\in(0,+\infty)$ is the constant in equation~\eqref{cala}. 

Therefore, by~\eqref{xldvaxknterv543},~\eqref{servcloybegr},~\eqref{loperto},~\eqref{onvegdret54} and the choice of $\epsilon$ and $\delta$, we obtain that all the hypothesis to apply Theorem~\ref{CapVieABall} are satisfied. In virtue of~\eqref{benz.1} we obtain that there exist $\tilde{c}:=\tilde{c}_{s_0,n,p,m,\Lambda,c_1,\theta_*,r_0,c_0,\alpha,\left[u\right]_{B,\alpha}}\in (0,1)$ and $R^{*}:=R_{s_0,n,m,p,c_1,\theta_*,c_0}^{*}\in \left[r_0,+\infty\right)$ such that, for every $r\in \left[R^*,+\infty \right)$ satisfying $B_{4r}\subset \Omega$,
\begin{equation}\label{StWr}
c_{m,\theta_*}\int_{B_r\cap \left\lbrace\theta_*<v\leq \theta^*\right\rbrace}\left|1+v(x)\right|^m\,dx+\mathcal{L}^n(B_r\cap \left\lbrace v>\theta_2 \right\rbrace)\geq \tilde{c}r^n. 
\end{equation}
To conclude, we also notice that,
since $\alpha$ depends only on $n$, $p$ and $m$,  
\begin{equation*}
\left[v\right]_{C_B^\alpha(\Omega)}\quad\mbox{depends only on $n$, $p$, $m$ and $\Omega$}.   
\end{equation*} On this account, recalling~\eqref{loperto}, we can rewrite $\delta$ and $\tilde{c}$ as
\begin{equation*}
\delta=\delta_{s_0,n,p,m,c_1,\theta_*,c_0}\quad\mbox{and}\quad \tilde{c}=\tilde{c}_{s_0,n,p,m,\Lambda,c_1,\theta_*,r_0,c_0}.  
\end{equation*}  Hence,
owing to~\eqref{StWr} and the fact that the choice of $s_0\in \left(\frac{1}{p},1\right)$ is arbitrary, we conclude the proof of Theorem~\ref{viredghloprec45}. 
\end{proof}

\section{Proof of Theorem~\ref{qteanldeeo}}\label{motifenzi}
The following result is based on Theorem~1.3 in~\cite{MR3133422}. The main difference is that here we consider the case~$p\neq 2$ and we write explicitly the dependence of the constants with respect to $s\in(0,1)$, which will be important
to obtain stable estimates as~$s\to1$.  Also, for every $t\in\R$ we denote $t^{+}:=\max \left\lbrace t,0 \right\rbrace$.

\begin{proof}[Proof of Theorem~\ref{qteanldeeo}]
In what follows, given~$A,B\in(0,+\infty)$, we adopt the notation 
\begin{equation*}
A\lesssim B \quad\mbox{if}\quad A\leq C_{n,p,m,\Lambda}B
\end{equation*}
for a suitable constant $C_{n,p,m,\Lambda}\in (0,+\infty)$.
 
Moreover, for every measurable sets $E,F\subset \R^n$  and every
function $f:\R^n\to \R$, we set
\begin{equation*}
f(E,F):=\int_{E}\int_{F}\frac{\left|f(x)-f(y)\right|^p}{\left|x-y\right|^{n+sp}}\,dx\,dy.
\end{equation*}
We also define the barrier 
\begin{equation}\label{bar}
\psi(x):=-1+2\min \left\lbrace \left(\left|x\right|-R-1\right)^{+},1 \right\rbrace
\end{equation}
and the function
\begin{equation}\label{CliopCliop}
d(x):=\max\left\lbrace  R-\left|x\right|,1 \right\rbrace.
\end{equation}
Then, we claim that  
\begin{equation}\label{chelodfve6ujg3wm}
\mathcal{E}_s^p(\psi,B_{R+2})\lesssim  \begin{dcases}
\frac{R^{n-sp}}{s(1-sp)}\quad &\mbox{if}\quad s\in \left(0,\frac{1}{p}\right)\\
\frac{R^{n-1}\log(R)}{s}\quad &\mbox{if}\quad s=\frac{1}{p}\\
\frac{R^{n-1}}{s(sp-1)}\quad &\mbox{if}\quad s\in \left(\frac{1}{p},1\right).
\end{dcases}
\end{equation}
The proof of the claim is postponed for the moment, and we show first that if~\eqref{chelodfve6ujg3wm} holds true then~\eqref{BFAOTE} follows. To do so, we consider 
\begin{equation*}
v:=\min \left\lbrace u,\psi \right\rbrace. 
\end{equation*}
We notice that if we define 
$A:=\left\lbrace v=\psi \right\rbrace\cap B_{R+2}$ then we have that $B_{R+1}\subset A$. Also, it clearly holds that $u=v$ in $A^c$. Hence, we use the minimality of $u$ in $B_{R+2}$ and henceforth in $A$ to obtain that
\begin{equation}\label{JSTFC}
\begin{split}
\mathcal{E}_s^p(u,A)= & (1-s)\left(\frac{1}{2}u(A,A)+u(A,A^c)\right)+\int_{A}W(u)\,dx\\ 
\leq & (1-s)\left(\frac{1}{2}v(A,A)+v(A,A^c)\right)+\int_{A}W(v)\,dx\\
= &\mathcal{E}_s^p(v,A). 
\end{split}
\end{equation} 
Now, if $x\in A$ and $y\in A^c$ then $v(x)=\psi(x)\leq u(x)$ and $v(y)=u(y)\leq \psi(y)$, and thus 
\begin{equation*}
\left|v(x)-v(y)\right|\leq \max\left\lbrace \left|u(x)-u(y)\right|,\left|\psi(x)-\psi(y)\right| \right\rbrace.
\end{equation*}
On this account, we find that 
\begin{equation*}
v(A,A^c)\leq u(A,A^c)+\psi(A,A^c). 
\end{equation*}
Thus, making use of~\eqref{JSTFC}, we deduce that 
\begin{equation}\label{zona}
\begin{split}
(1-s)\frac{1}{2}u(A,A)+\int_{A}W(u)\,dx &\leq (1-s)\left(\frac{1}{2}\psi(A,A)+\psi(A,A^c)\right)+\int_{A}W(\psi)\,dx\\
&=\mathcal{E}_s^p(\psi,A)\\
&\leq \mathcal{E}_s^p(\psi,B_{R+2}). 
\end{split}
\end{equation}
In particular, since $B_{R+1}\subset A\subset B_{R+2}$, using~\eqref{zona} we infer that 
\begin{equation}\label{coag-098lut}
\begin{split}
\mathcal{E}_s^p(u,B_R)&=(1-s)\mathcal{K}_s^p(u,B_R)+\int_{B_R} W(u)\,dx\\
&\leq (1-s)\left(\frac{1}{2}u(B_{R+1},B_{R+1}) +u(B_{R},B_{R+1}^c)\right) +\int_{B_R} W(u)\,dx\\
&\leq (1-s)\left(\frac{1}{2}u(A,A) +u(B_{R},B_{R+1}^c)\right) +\int_{A} W(u)\,dx\\
&\leq  \mathcal{E}_s^p(\psi,B_{R+2})+(1-s)u(B_{R},B_{R+1}^c).
\end{split}
\end{equation}
 
Furthermore, we notice that 
\begin{equation}\label{fropppitre}
\int_{0}^{R-1}t^{n-1}(R-t)^{-sp}\leq \begin{dcases}
\frac{R^{n-1}}{1-sp} \left(R^{1-sp}-1\right)\quad &\mbox{if}\quad s\in \left(0,\frac{1}{p}\right)\\
R^{n-1} \left(\log(R)-\log(1)\right)\quad &\mbox{if}\quad s=\frac{1}{p}\\
\frac{R^{n-1}}{sp-1}(1-R^{1-sp})\quad &\mbox{if}\quad s\in \left(\frac{1}{p},1\right).
\end{dcases}
\end{equation}
As a result, we have that
\begin{equation*}
\begin{split}
(1-s)u(B_R,\R^{n}\setminus B_{R+1})&=(1-s)\int_{B_{R}}\int_{\R^n\setminus B_{R+1}}\frac{\left|u(x)-u(y)\right|^p}{\left|x-y\right|^{n+sp}}\,dy\,dx\\
&\leq 2^p(1-s)\int_{B_R}\int_{\R^n\setminus B_{R+1}(x)}\frac{dy}{\left|y\right|^{n+sp}}\,dx\\
&\leq 2^{p}(1-s)\int_{B_R}\int_{\R^n\setminus B_{d(x)}}\frac{dy}{\left|y\right|^{n+sp}}\,dx\\  
&=2^{p}(1-s) \omega_{n-1} \int_{B_R} d(x)^{-sp}\,dx\\
&=2^{p}(1-s) \omega_{n-1}^2\left(\int_{0}^{R-1}t^{n-1}\left(R-t\right)^{-sp}\,dt+\int_{R-1}^R t^{n-1}\,dt\right)\\
&\leq 2^{p}(1-s)\omega_{n-1}^{2}\begin{dcases}
\frac{R^{n-1}}{1-sp} \left(R^{1-sp}-1\right)+R^{n-1}\quad &\mbox{if}\quad s\in \left(0,\frac{1}{p}\right)\\
R^{n-1} \left(\log(R)-\log(1)\right)+R^{n-1}\quad &\mbox{if}\quad s=\frac{1}{p}\\
\frac{R^{n-1}}{sp-1}(1-R^{1-sp})+R^{n-1}\quad &\mbox{if}\quad s\in \left(\frac{1}{p},1\right)
\end{dcases}\\
&\leq C(1-s)\begin{dcases}
\frac{R^{n-sp}}{1-sp} &\mbox{if}\quad s\in \left(0,\frac{1}{p}\right)\\
R^{n-1}\log(R)&\mbox{if}\quad s=\frac{1}{p}\\
\frac{R^{n-1}}{sp-1}\quad &\mbox{if}\quad s\in \left(\frac{1}{p},1\right).
\end{dcases}\\
\end{split}
\end{equation*}
Making use of this,~\eqref{coag-098lut} and claim~\eqref{chelodfve6ujg3wm} we conclude the proof of~\eqref{BFAOTE}.

It remains however to check claim~\eqref{chelodfve6ujg3wm}.
To this end, we first show that
\begin{equation}\label{pnlfvbtdcsfclm}
\left|\psi(x)-\psi(y)\right|\leq \begin{dcases}
2d(x)^{-1}\left|x-y\right|\quad &\mbox{if}\quad \left|x-y\right|<d(x)\\
2\quad &\mbox{if}\quad \left|x-y\right|\geq d(x).
\end{dcases}
\end{equation}
To prove this, we notice that, for every $x,y\in\R^n$,
\begin{equation}\label{tgbrfvedc4443}
\left|\psi(x)-\psi(y)\right|=2\left|\min\left\lbrace \left(\left|x\right|-R-1\right)^{+}, 1 \right\rbrace-\min\left\lbrace \left(\left|y\right|-R-1\right)^{+}, 1\right\rbrace\right|.
\end{equation}
Also, 
\begin{equation}\label{ted-hyt-trgh}
\min\left\lbrace \left(\left|x\right|-R-1\right)^{+}, 1   \right\rbrace=\begin{dcases}
0\quad &\mbox{if}\quad x\in B_{R+1}\\
\left|x\right|-R-1\quad &\mbox{if}\quad x\in B_{R+2}\setminus B_{R+1}\\
1\quad &\mbox{if}\quad x\in \R^{n}\setminus B_{R+2}. 
\end{dcases}
\end{equation}
Now, to prove~\eqref{pnlfvbtdcsfclm}, we distinguish between the cases \begin{equation}\label{ted-hyt-trgh:CASO1}
\left|x-y\right|<d(x)\end{equation} and \begin{equation}\label{ted-hyt-trgh:CASO2}\left|x-y\right|\geq d(x).\end{equation} 

Suppose first that~\eqref{ted-hyt-trgh:CASO1} holds true.
Thus, if $x\in B_{R-1}$ we deduce from~\eqref{CliopCliop} that $d(x)=R-\left|x\right|$ and accordingly $\left|x-y\right|<R-\left|x\right|$,
which in turn gives that~$y\in B_R$. 

From this,~\eqref{ted-hyt-trgh} and~\eqref{tgbrfvedc4443} it follows that  
\begin{equation}\label{lopwrfv}
\left|\psi(x)-\psi(y)\right|=0. 
\end{equation}

Also, when $x\in B_{R+1}\setminus B_{R-1}$ we infer from~\eqref{CliopCliop} that 
$d(x)=1$ and thus $\left|x-y\right|<1$. It thereby follows that $y\in B_{R+2}\setminus B_{R-1}$ and then, in light of~\eqref{tgbrfvedc4443} and~\eqref{ted-hyt-trgh}, 
\begin{equation}\label{reveleoep7}
\begin{split}
\left|\psi(x)-\psi(y)\right| & = \begin{dcases}
 0             \quad &\mbox{if}\quad y\in B_{R+1}  \setminus  B_{R-1}\\
 2\left(\left|y\right|-R-1\right) \quad &\mbox{if}\quad y\in B_{R+2}\setminus B_{R+1}
\end{dcases}\\
& \leq \begin{dcases}
 0             \quad &\mbox{if}\quad y\in B_{R+1}  \setminus  B_{R-1}\\
2\left(\left|y\right|-\left|x\right|\right) \quad &\mbox{if}\quad y\in B_{R+2}\setminus B_{R+1}
\end{dcases}\\
&\leq \begin{dcases}
 0             \quad &\mbox{if}\quad y\in B_{R+1}  \setminus  B_{R-1}\\
2\left|x-y\right| \quad &\mbox{if}\quad y\in B_{R+2}\setminus B_{R+1}
\end{dcases}\\
&=\begin{dcases}
 0             \quad &\mbox{if}\quad y\in B_{R+1}  \setminus  B_{R-1}\\
2d^{-1}(x) \left|x-y\right|\quad &\mbox{if}\quad y\in B_{R+2}\setminus B_{R+1}.
\end{dcases}
\end{split}
\end{equation}

When instead~$x\in B_{R+2}\setminus B_{R+1}$, we infer from~\eqref{CliopCliop} that $d(x)=1$ and thus $\left|x-y\right|<1$. This yields that $y\in B_{R+3}\setminus B_R$. Then, by virtue of~\eqref{tgbrfvedc4443}  and~\eqref{ted-hyt-trgh}, we see that
\begin{equation}\label{jjjyyytge}
\begin{split}
\left|\psi(x)-\psi(y)\right|& = \begin{dcases} 
2\left(\left|x\right|-R-1\right)\quad &\mbox{if}\quad y\in B_{R+1}\setminus B_R\\
2\left|\left|x\right|-\left|y\right|\right| \quad &\mbox{if}\quad y\in B_{R+2}\setminus B_{R+1}\\  
2\left|\left|x\right|-R-2\right|\quad &\mbox{if}\quad y\in B_{R+3}\setminus B_{R+2}
\end{dcases}\\
& \leq \begin{dcases} 
2\left(\left|x\right|-\left|y\right|\right)\quad &\mbox{if}\quad y\in B_{R+1}\setminus B_R\\
2\left|x-y\right| \quad &\mbox{if}\quad y\in B_{R+2}\setminus B_{R+1}\\  
2\left(R+2-\left|x\right|\right)\quad &\mbox{if}\quad y\in B_{R+3}\setminus B_{R+2}
\end{dcases}\\
&\leq \begin{dcases} 
2\left|x-y\right|\quad &\mbox{if}\quad y\in B_{R+1}\setminus B_R\\
2d^{-1}(x) \left|x-y\right|\quad &\mbox{if}\quad y\in B_{R+2}\setminus B_{R+1}\\  
2\left(\left|y\right|-\left|x\right|\right)\quad &\mbox{if}\quad y\in B_{R+3}\setminus B_{R+2}
\end{dcases}\\
&=\begin{dcases} 
2d^{-1}(x) \left|x-y\right|\quad &\mbox{if}\quad y\in B_{R+1}\setminus B_R\\
2d^{-1}(x) \left|x-y\right|\quad &\mbox{if}\quad y\in B_{R+2}\setminus B_{R+1}\\  
2d^{-1}(x) \left|x-y\right|\quad &\mbox{if}\quad y\in B_{R+3}\setminus B_{R+2}. 
\end{dcases}
\end{split}
\end{equation}

If instead $x\in \R^{n}\setminus B_{R+2}$, we use that $\left|x-y\right|<d(x)=1$
and accordingly~$y\in \R^n\setminus B_{R+1}$. From this, we arrive at
\begin{equation*}
\begin{split}
\left|\psi(x)-\psi(y)\right|&=\begin{dcases}
2\left(R+2-\left|y\right|\right)\quad &\mbox{if}\quad y\in B_{R+2}\setminus B_{R+1}\\
0\quad &\mbox{if}\quad y\in \R^n\setminus B_{R+2}
\end{dcases}\\
&\leq \begin{dcases}
2\left(\left|x\right|-\left|y\right|\right)\quad &\mbox{if}\quad y\in B_{R+2}\setminus B_{R+1}\\
0\quad &\mbox{if}\quad y\in \R^n\setminus B_{R+2}
\end{dcases}\\
&\leq \begin{dcases}
2\left|x-y\right|\quad &\mbox{if}\quad y\in B_{R+2}\setminus B_{R+1}\\
0\quad &\mbox{if}\quad y\in \R^n\setminus B_{R+2}
\end{dcases}\\
&=\begin{dcases}
2d^{-1}(x) \left|x-y\right|\quad &\mbox{if}\quad y\in B_{R+2}\setminus B_{R+1}\\
0\quad &\mbox{if}\quad y\in \R^n\setminus B_{R+2}.
\end{dcases}\\
\end{split}
\end{equation*}

By combining the latter estimate with~\eqref{lopwrfv},~\eqref{reveleoep7} and~\eqref{jjjyyytge}, we deduce that~\eqref{pnlfvbtdcsfclm} holds true in this case.

We now focus on the case given by~\eqref{ted-hyt-trgh:CASO1}.
In this situation, it follows from~\eqref{tgbrfvedc4443} that 
\begin{equation*}
\left|\psi(x)-\psi(y)\right|\leq 2
\end{equation*}
and this concludes the proof of~\eqref{pnlfvbtdcsfclm}.

Now, using~\eqref{pnlfvbtdcsfclm}, given $x\in\R^n$ we see that
\begin{equation*}
\begin{split}
&\int_{\R^n}\frac{\left|\psi(x)-\psi(y)\right|^p}{\left|x-y\right|^{n+sp}}\,dy\\
\lesssim & \left(\int_{B_{d(x)}(x)} \frac{d(x)^{-p}}{\left|x-y\right|^{n+sp-p}}\,dy+\int_{\R^n\setminus B_{d(x)}(x)} \frac{dy}{\left|x-y\right|^{n+sp}}\right)\\
\lesssim & \left( d(x)^{-p} \int_{0}^{d(x)}r^{-1-p+sp}\,dr+\int_{d(x)}^{+\infty} r^{-1-sp}\,dr\right) \\
=& \frac{d(x)^{-sp}}{p-sp} + \frac{d(x)^{-sp}}{sp} \\  
=& \frac{d(x)^{-sp}}{sp(1-s)},
\end{split}
\end{equation*}
From this, integrating with respect to $x\in B_{R+2}$, we obtain that
\begin{equation*}
\begin{split}
\mathcal{K}_s^p(\psi,B_{R+2}) &\leq \int_{B_{R+2}}\int_{\R^n}\frac{\left|\psi(x)-\psi(y)\right|^p}{\left|x-y\right|^{n+sp}}\,dy\,dx\\
&\lesssim\frac{1}{s(1-s)}\int_{B_{R+2}}d(x)^{-sp}\,dx\\
&= \frac{1}{s(1-s)}\left(\left|B_{R+2}\setminus B_{R-1}\right| +\int_{B_{R-1}}\left(R-\left|x\right|\right)^{-sp}\,dx\right)\\
&\lesssim \frac{1}{s(1-s)}\left( R^{n-1}+ \int_{0}^{R-1}t^{n-1}\left(R-t\right)^{-sp}\,dt\right).
\end{split}
\end{equation*}

Therefore, recalling~\eqref{fropppitre}, we conclude  that, for every $R\geq 2$, 
\begin{equation}\label{kb-0954g6}
\mathcal{K}_s^p(\psi,B_{R+2})\lesssim \frac{1}{s(1-s)}\begin{dcases}
\frac{R^{n-1}}{1-sp} \left(R^{1-sp}-1\right)\quad &\mbox{if}\quad s\in \left(0,1/p\right)\\
R^{n-1} \left(\log(R)-\log(1)\right)\quad &\mbox{if}\quad s=1/p\\
\frac{R^{n-1}}{sp-1}(1-R^{1-sp})\quad &\mbox{if}\quad s\in \left(1/p,1\right).
\end{dcases}
\end{equation} 
Finally, using~\eqref{potential},~\eqref{bar} and~\eqref{ted-hyt-trgh} we obtain that 
\begin{equation}\label{qt-ujgtre}
\begin{split}
\int_{B_{R+2}}W(\psi(x))\,dx &=\int_{B_{R+2}\setminus B_{R+1}}W(\psi(x))\,dx\\
&\lesssim  \int_{B_{R+2}\setminus B_{R+1}}\left|1+\psi(x)\right|^m\,dx\\
&\lesssim R^{n-1}.
\end{split}
\end{equation}
Therefore, from~\eqref{kb-0954g6} and~\eqref{qt-ujgtre} we obtain~\eqref{chelodfve6ujg3wm}, as desired.\end{proof}

\begin{appendix}
\begin{center}\section*{Appendices}
\end{center}

\section{A recursive formula}
For $x\in [-1,1]$, $m\in(1,+\infty)$ and $k\in \left\lbrace 1,\dots, [m] \right\rbrace$  we define the polynomials 
\begin{equation*}
\begin{dcases}
P_m^1(x):=-2xm\\
P_{m}^k(x):=P_{m-k+1}^{1}(x)P_{m}^{k-1}(x)+(1-x^2){P_{m}^{k-1}}'(x).
\end{dcases}
\end{equation*}
Then, we have the following result: 
\begin{prop}\label{colllise}
Let $m\in (1,+\infty)$ and $W_m(x):=(1-x^2)^m$ with $x\in[-1,1]$. Then, for every $k\in \left\lbrace 1,\dots, [m] \right\rbrace$,
\begin{equation}\label{telenious}
W_m^{(k)}(x)=W_{m-k}(x)P_m^k(x).
\end{equation}

Also, for every $m\in (1,+\infty)$ and $c\in (0,1)$ there exists~$\hat{q}_{m,c}\in (0,1)$ such that, for every $k\in \left\lbrace 1,\dots, \left[m\right] \right\rbrace$ and $x\in [-1,-1+\hat{q}_{m,c}]$, 
\begin{equation}\label{term-0987}
P_m^k(x)\geq c.
\end{equation}
In particular, $W_m(x)$ satisfies~\eqref{potntiald2} for some $c_1\in (0,+\infty)$ and $q\in (0,1)$. 
\end{prop}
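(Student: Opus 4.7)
The plan is to establish the three claims in sequence. The first two are essentially computational, while the last one involves the only nontrivial estimate.

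For~\eqref{telenious} I would argue by induction on $k$. The base case $k=1$ follows from $W_m'(x)=-2mx(1-x^2)^{m-1}=W_{m-1}(x)P_m^1(x)$. Assuming $W_m^{(k-1)}=W_{m-k+1}P_m^{k-1}$, one differentiates once more via the product rule, then uses the factorization $W_{m-k+1}(x)=(1-x^2)W_{m-k}(x)$ together with the base-case identity $W_{m-k+1}'(x)=W_{m-k}(x)P_{m-k+1}^1(x)$ to extract a common factor of $W_{m-k}(x)$; what remains in brackets is exactly the recursive definition of $P_m^k(x)$. For~\eqref{term-0987}, the key observation is that the factor $(1-x^2)$ in the recursion vanishes at $x=-1$, so an immediate induction gives $P_m^k(-1)=2^k\,m(m-1)\cdots(m-k+1)\geq 2$ for every $k\in\{1,\dots,[m]\}$. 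Continuity of each polynomial $P_m^k$ then produces a neighborhood $[-1,-1+q_k]$ on which $P_m^k\geq c$, and the minimum $\hat{q}_{m,c}:=\min_k q_k$, taken over the finite index set $\{1,\dots,[m]\}$, delivers the claim.

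For~\eqref{potntiald2}, my approach is Taylor's formula with integral remainder centered at $r$:
\begin{equation*}
W_m(t)-W_m(r)=\sum_{j=1}^{k-1}\frac{W_m^{(j)}(r)}{j!}(t-r)^j+\frac{1}{(k-1)!}\int_r^t(t-u)^{k-1}W_m^{(k)}(u)\,du.
\end{equation*}
Via~\eqref{telenious}, the first-order term equals $(1-r^2)^{m-1}P_m^1(r)(t-r)$. Using $(1-r^2)^{m-1}\geq(1+r)^{m-1}$ for $r\leq 0$ together with $P_m^1(r)=-2mr\geq 2m(1-q)$, this dominates $c_1(t-r)|1+r|^{m-1}$ for a suitable $c_1>0$. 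Each intermediate term with $2\leq j\leq k-1$ factors as the non-negative $(1-r^2)^{m-j}$ (since $m-j>0$ in both regimes of~\eqref{key?}) times the strictly positive $P_m^j(r)$ given by~\eqref{term-0987}, hence is non-negative and may simply be dropped from the lower bound.

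The step I expect to require the most attention is bounding the remainder integral below by $c_1(t-r)^k$. When $m\in\N$ and $k=m$, the integrand reduces to $W_m^{(m)}(u)=P_m^m(u)\geq c$ for $u$ near $-1$, and the substitution $u=r+s(t-r)$ yields the correct $(t-r)^k$ factor. When $m\notin\N$ and $k=[m]+1$, I would differentiate~\eqref{telenious} once more to obtain
\begin{equation*}
W_m^{(k)}(u)=(1-u^2)^{m-k}\Bigl\{P_{m-[m]}^1(u)P_m^{[m]}(u)+(1-u^2){P_m^{[m]}}'(u)\Bigr\}.
\end{equation*}
Since $m-k=\{m\}-1<0$ and $1-u^2\in(0,1]$, the prefactor $(1-u^2)^{m-k}$ is actually $\geq 1$; the bracketed expression is continuous and equals $2(m-[m])P_m^{[m]}(-1)>0$ at $u=-1$, so it stays above a positive constant on a neighborhood of $-1$. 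Consequently $W_m^{(k)}$ admits a positive lower bound on $[-1,-1+q]$ for $q$ small enough, and the same change of variables produces the desired $c_1(t-r)^k$ contribution. Taking $q$ compatible with $\hat{q}_{m,c}$ finishes the proof.
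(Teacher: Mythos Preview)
Your proof is correct and follows the same overall architecture as the paper: induction for~\eqref{telenious}, positivity of the $P_m^k$ near $-1$, and Taylor expansion at $r$ keeping only the first-order term and the remainder (the intermediate terms being nonnegative by~\eqref{telenious} and~\eqref{term-0987}). The one place where you genuinely streamline the argument is~\eqref{term-0987}: the paper builds an explicit decreasing sequence $q_1\geq q_2\geq\dots\geq q_{[m]}$ by induction, at each step controlling the derivative term $(1-x^2){P_m^{k-1}}'(x)$ via $\|{P_m^{k-1}}'\|_{L^\infty}$, whereas you simply observe that the factor $(1-x^2)$ kills that term at $x=-1$, compute the closed form $P_m^k(-1)=2^k m(m-1)\cdots(m-k+1)>1>c$, and invoke continuity on the finite family. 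This is cleaner and avoids tracking constants. For the non-integer case your treatment of $W_m^{([m]+1)}$ is likewise the same computation as the paper's, phrased more qualitatively (prefactor $\geq 1$ since the exponent is negative, bracket positive at $-1$ by continuity) rather than through the paper's explicit choice of a threshold $q_1$ solving an algebraic equation; both arrive at the same lower bound on the remainder.
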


\begin{proof} 
We notice that, for every $m\in (1,+\infty)$,
\begin{equation}\label{klkp.mtv5}
W_m'(x)=-2xm(1-x^2)^{m-1}=P_m^1(x)W_{m-1}(x).
\end{equation}

Let us now proceed recursively and assume that $m\in [2,+\infty)$ and that~\eqref{telenious} holds for every $j\leq k-1<\left[m\right]$. Then, 
\begin{equation*}
\begin{split}
W_m^{(k)}(x)&=\left(W_m^{(k-1)}(x)\right)'\\
&=\left(W_{m-k+1}(x)P_m^{k-1}(x)\right)'\\
&=W_{m-k+1}'(x)P_m^{k-1}(x)+W_{m-k+1}(x){P_m^{k-1}}'(x)\\
&=P_{m-k+1}^1(x)W_{m-k}(x)P_m^{k-1}(x)+W_{m-k+1}(x){P_m^{k-1}}'(x)\\
&=W_{m-k}(x)\left(P_{m-k+1}^1(x)P_m^{k-1}(x)+(1-x^2){P_m^{k-1}}'(x)\right)\\
&=W_{m-k}(x)P_m^k(x).
\end{split}
\end{equation*}
This concludes the proof of~\eqref{telenious}.

Now we prove~\eqref{term-0987}. For this, we consider $0<q_{\left[m\right]}\leq \dots\leq q_1<1$ which will be chosen as follows. 

For any $m\in (1,+\infty)$, we choose $q_1:=1-\frac{c}{2m}$, from which it follows that, for every $x\in \left[-1,-1+q_1\right]$,
\begin{equation*}
P_m^1(x)=-2xm\geq -2m(-1+q_1)=2m(1-q_1)=c.
\end{equation*}
Now we assume that $P_m^j(x)\geq c$ for every $j\leq k-1<\left[m\right]$ and $x\in \left[-1,-1+q_j\right]$. We need to show that there exists~$q_k\in\left(0, q_{k-1}\right)$ such that $P_m^k(x)\geq c$ for every $x\in [-1,-1+q_k]$. In order to do so, we let 
\begin{equation}
q_k:=\min \left\lbrace q_{k-1}, \frac{2c(m-k+1)-c}{2c(m-k+1)+2\left\|{P_m^{k-1}}'\right\|_{L^\infty([-1,0])}} \right\rbrace
\end{equation}
and, for every $x\in [-1,-1+q_k]\subset [-1,-1+q_{k-1}]$,
\begin{equation*}
\begin{split}
P_{m}^k(x)&=P_{m-k+1}^1(x)P_m^{k-1}(x) +(1-x^2){P_m^{k-1}}'(x)\\
&\geq P_{m-k+1}^1(x)P_m^{k-1}(x)-(1-x^2)\left\|{P_m^{k-1}}'\right\|_{L^\infty([-1,0])}\\
& \geq 2c(1-q_k)(m-k+1)-2q_k\left\|{P_m^{k-1}}'\right\|_{L^\infty([-1,0])}\\
&\geq c.
\end{split}
\end{equation*}
In particular, by choosing $\hat{q}_{c,m}:=q_{\left[m\right]}$ we establish the validity of~\eqref{term-0987}, as desired.

Thus, as a consequence of~\eqref{telenious} and~\eqref{term-0987},
we find that 
\begin{equation*}
\begin{split}
W_m(t)-W_m(r)&=\int_{r}^tW_m'(\xi)\,d\xi\\
&=\sum_{j=1}^{k-1} \frac{W_m^{(j)}(r)}{j!}(t-r)^j+\int_{r}^t\dots \int_{r}^l W_m^{(k)}(z)\,dz\dots\,d\xi\\
&\geq W_m^{(1)}(r)(t-r)  + \frac{c}{k!}(t-r)^k\\
&=(1-r^2)^{m-1}P_1^m(r)(t-r)+\frac{c}{k!}(t-r)^k\\
&\geq c(1-r)^{m-1}(1+r)^{m-1}(t-r)+\frac{c}{k!}(t-r)^k\\
&\geq c(2-\hat{q}_{c,m})^{m-1}(1+r)^{m-1}(t-r)+\frac{c}{k!}(t-r)^k,
\end{split}
\end{equation*}
as long as $m\in \N$, $c\in (0,1)$, $-1\leq r\leq t\leq -1+\hat{q}_{c,m}$ and $k=m$.

This shows that $W_m(x)$ satisfies~\eqref{potntiald2} for every $m\in\N$  with $q:=\hat{q}_{c,m}$ and 
\begin{equation}\label{c_1}
c_1:=\min\left\lbrace  c(2-q)^{m-1} ,\frac{c}{k!}    \right\rbrace.
\end{equation}

Additionally, if $m\in (1,+\infty)\setminus \N$ and $c\in (0,1)$ we define $q_1\in (0,1)$ as the unique solution to 
\begin{equation*}
2c(1-q_1)(m-\left[m\right])(2 q_1)^{m-\left[m\right]-1} =c+ (2 q_1)^{m-\left[m\right]}\left\|{P_m^{\left[m\right]}}'\right\|_{L^\infty([-1,0])}.
\end{equation*}
We observe that, for every $t\in (0,q_1)$, 
\begin{equation}\label{jvecogdv4}
2c(1-t)(m-\left[m\right])(2 t)^{m-\left[m\right]-1}>c+ (2 t)^{m-\left[m\right]}\left\|{P_m^{\left[m\right]}}'\right\|_{L^\infty([-1,0])}.
\end{equation}
Then, we set  
\begin{equation}\label{eq!}
q:=\min \left\lbrace \hat{q}_{c,m}, q_1  \right\rbrace
\end{equation}
and we deduce from~\eqref{jvecogdv4},~\eqref{telenious} and~\eqref{term-0987} that,
when $k=\left[m\right]+1$,
\begin{equation*}
\begin{split}
W_m^{(k)}(x)&=\left(W_m^{([m])}(x)\right)'\\
&=\left((1-x^2)^{m-\left[m\right]}P_m^{\left[m\right]}(x)\right)'\\
&=-2x(m-\left[m\right])\left(1-x^2\right)^{m-\left[m\right]-1}P_m^{\left[m\right]}(x)+ (1-x^2)^{m-\left[m\right]}{P_m^{\left[m\right]}}'(x)\\
&\geq 2c(1-\tilde{q})(m-\left[m\right])(2\tilde{q})^{m-\left[m\right]-1}-(2\tilde{q})^{m-\left[m\right]}\left\|{P_m^{\left[m\right]}}'\right\|_{L^\infty([-1,0])}\\
&\geq c,
\end{split}
\end{equation*}
for every $x\in [-1,-1+q]$

Because of this, we have that, for every $-1\leq r\leq t\leq -1+q$,
\begin{equation*}
\begin{split}
W_m(t)-W_m(r)&=\int_{r}^tW_m'(\xi)\,d\xi\\
&=\sum_{j=1}^{k-1} \frac{W_m^{(j)}(r)}{j!}(t-r)^j+\int_{r}^t\dots \int_{r}^l W_m^{(k)}(z)\,dz\dots\,d\xi\\
&\geq W_m^{(1)}(r)(t-r)  + \frac{c}{k!}(t-r)^k\\
&=(1-r^2)^{m-1}P_1^m(r)(t-r) \frac{c}{k!}(t-r)^k\\
&\geq c(1-r)^{m-1}(1+r)^{m-1}(t-r)+\frac{c}{k!}(t-r)^k\\
&\geq c(2-q)^{m-1}(1+r)^{m-1}(t-r)+\frac{c}{k!}(t-r)^k.
\end{split}
\end{equation*}
This proves that $W_{m}$ satisfies~\eqref{potntiald2} for every $m\in (1,+\infty)\setminus \N$ with $c_1$ and $q$ respectively as in~\eqref{c_1} and~\eqref{eq!}.  
\end{proof}

\section{Proof of Theorem~\ref{SymverofTh}}\label{Crocodilewalk}
In this section we prove Theorem~\ref{SymverofTh}. 
The estimate in Theorem~\ref{SymverofTh} is actually a consequence of the following integral inequality: 
\begin{prop}\label{AmGiDiMe}
Let $R,\sigma\in (0,+\infty)$. Then, there exists a constant $c_{n,\sigma}\in (0,+\infty)$
such that, for every $x\in B_{R}$, 
\begin{equation}\label{hloiacrca}
\int_{\R^n\setminus B_R}\frac{dy}{\left|x-y\right|^{n+\sigma}}\geq \frac{c_{n,\sigma}}{\sigma}(R-\left|x\right|)^{-\sigma}.
\end{equation}
Furthermore, if $\sigma\in(0,M)$ for some $M\in(0,+\infty)$, it holds that $c_{n,\sigma}\geq c_{n,M}$.
\end{prop}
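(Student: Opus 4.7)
\medskip

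\noindent\textbf{Proof proposal for Proposition~\ref{AmGiDiMe}.} The plan is a standard half-space reduction followed by a scaling argument. Let $x\in B_R$ and set $d:=R-|x|>0$. Let $\nu:=x/|x|$ when $x\neq 0$ (and $\nu$ any unit vector when $x=0$), so that $\bar{x}:=R\nu$ is the point of $\partial B_R$ closest to $x$. Consider the open half-space
\begin{equation*}
H:=\left\lbrace y\in\R^n\mbox{ s.t. } y\cdot \nu > R \right\rbrace,
\end{equation*}
which is tangent to $\partial B_R$ at $\bar{x}$. By convexity of $B_R$ we have $H\subset \R^n\setminus B_R$, and the (signed) distance from $x$ to the boundary hyperplane of $H$ equals exactly $d$. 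Hence
\begin{equation*}
\int_{\R^n\setminus B_R}\frac{dy}{|x-y|^{n+\sigma}}\geq \int_{H}\frac{dy}{|x-y|^{n+\sigma}}.
\end{equation*}

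The next step is to translate and rotate coordinates so that $x$ becomes the origin and $\nu$ becomes $e_1$. Under this rigid motion, $H$ becomes the half-space $\{z\in\R^n\mbox{ s.t. }z_1>d\}$ and, by the change of variables $z=d\,w$, we obtain
\begin{equation*}
\int_{H}\frac{dy}{|x-y|^{n+\sigma}}=\int_{\{z_1>d\}}\frac{dz}{|z|^{n+\sigma}}=d^{-\sigma}\int_{\{w_1>1\}}\frac{dw}{|w|^{n+\sigma}}.
\end{equation*}
Writing $w=(w_1,w')\in\R\times\R^{n-1}$ and using polar coordinates in $\R^{n-1}$, Fubini gives
\begin{equation*}
\int_{\{w_1>1\}}\frac{dw}{|w|^{n+\sigma}}=\omega_{n-2}\int_{1}^{+\infty}\int_{0}^{+\infty}\frac{r^{n-2}}{(w_1^2+r^2)^{(n+\sigma)/2}}\,dr\,dw_1.
\end{equation*}
The substitution $r=w_1 t$ in the inner integral reveals a factor $w_1^{-\sigma-1}$, so the double integral decouples and yields
\begin{equation*}
\int_{\{w_1>1\}}\frac{dw}{|w|^{n+\sigma}}=\frac{\gamma_n(\sigma)}{\sigma},\qquad \gamma_n(\sigma):=\omega_{n-2}\int_{0}^{+\infty}\frac{t^{n-2}}{(1+t^2)^{(n+\sigma)/2}}\,dt.
\end{equation*}
Setting $c_{n,\sigma}:=\gamma_n(\sigma)$ one obtains~\eqref{hloiacrca}.

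For the last claim, it suffices to observe that for each fixed $t>0$ the map $\sigma\mapsto (1+t^2)^{-(n+\sigma)/2}$ is non-increasing, so $\sigma\mapsto \gamma_n(\sigma)$ is non-increasing on $(0,+\infty)$. Hence if $\sigma\in(0,M)$ then $c_{n,\sigma}=\gamma_n(\sigma)\geq \gamma_n(M)=c_{n,M}$. The only mildly subtle point is verifying that $\gamma_n(\sigma)$ is finite and strictly positive for all $\sigma>0$, which is immediate since the integrand behaves like $t^{n-2}$ near $0$ and like $t^{-2-\sigma}$ at infinity; I do not foresee any real obstacle in carrying out the argument.
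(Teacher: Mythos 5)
Your argument is correct. It proves the same lower bound as the paper, but by a slightly different route: the paper inscribes in $\R^n\setminus B_R$ a truncated cone $T_m(\left|x\right|)$ (points with $z_1>R$ and transverse coordinates bounded by $m(z_1-\left|x\right|)$), bounds the kernel pointwise there by a multiple of $(z_1-\left|x\right|)^{-n-\sigma}$, and reduces to a one-dimensional integral, whereas you inscribe the full tangent half-space $H$ and compute the resulting integral exactly by translation, rotation and scaling. Both realizations rest on the same geometric observation -- from $x$ the complement of the ball contains a fixed solid region at distance $R-\left|x\right|$ -- and both produce the factor $\frac1\sigma$ and the required monotonicity of $c_{n,\sigma}$ in $\sigma$ (in your case because $(1+t^2)^{-(n+\sigma)/2}$ is non-increasing in $\sigma$; in the paper's case because $((n-1)m^2+1)^{-(n+\sigma)/2}$ is). Your version is arguably cleaner and yields the sharp constant for the half-space. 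The only cosmetic caveat is the case $n=1$, where the polar-coordinate formula with $\omega_{n-2}$ degenerates and one should simply read $\int_{\{z_1>d\}}\left|z\right|^{-1-\sigma}\,dz=\frac{d^{-\sigma}}{\sigma}$ directly; the paper's cone construction suffers from the same harmless degeneracy.
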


\begin{proof}
To show~\eqref{hloiacrca}, we begin by observing that, for every $x\in B_R$,
\begin{equation}\label{symmetrymotherfucker}
\int_{\R^n\setminus B_R} \frac{dy}{\left|x-y\right|^{n+\sigma}}= \int_{\R^n\setminus B_R} \frac{dy}{\left|\left|x\right|e_1-y\right|^{n+\sigma}}.  
\end{equation}
Also, given $m>0$, we define
\begin{equation}\label{Tiemme}
T_m(\left|x\right|):=\left\lbrace z\in\R^n|\, z_1\in (R,+\infty) \mbox{  and  }  -m  (z_1-\left|x\right|)\leq z_{i} \leq m (z_1-\left|x\right|) \mbox{  for all  }i=2,\dots,n\right\rbrace,     
\end{equation}
and we notice that
\begin{equation}\label{sod}
T_m(\left|x\right|)\subset \R^{n}\setminus B_R.     
\end{equation}
Accordingly, thanks to~\eqref{symmetrymotherfucker},~\eqref{Tiemme} and~\eqref{sod} we obtain that 
\begin{equation*}
\begin{split}
\int_{\R^n\setminus B_R} \frac{dy}{\left|x-y\right|^{n+\sigma}}&\geq \int_{T_m(\left|x\right|)}\frac{dz}{\left|e_1\left|x\right|-z\right|^{n+\sigma}}\\
&= \int_{R}^{+\infty}\int_{-m(z_1-\left|x\right|)}^{m(z_1-\left|x\right|)}\dots\int_{-m(z_1-\left|x\right|)}^{m(z_1-\left|x\right|)}\frac{dz_{n}\dots d z_2}{\left|\left|x\right|e_1-z\right|^{n+\sigma}}\,dz_1.
\end{split}
\end{equation*}

Besides, we observe that, for every $z\in T_m(\left|x\right|)$,
\begin{equation}\label{cagaioredgh}
\begin{split}
\left| \left|x\right|e_1-z \right|^2&=\left(\left|x\right|-z_1\right)^2+z_2^2\dots +z_{n}^2\\
&\leq \left(\left|x\right|-z_1\right)^2+m^2\left(z_1-\left|x\right|\right)^{2}+\dots +m^2\left(1-z_1\right)^2 \\  
&= ((n-1)m^2+1)\left(z_1-\left|x\right|\right)^2.
\end{split}
\end{equation}
Thus, making use of~\eqref{sod} and~\eqref{cagaioredgh} we evince that  
\begin{equation}\label{sordina}
\begin{split}
\int_{\R^n\setminus B_R} \frac{dy}{\left|x-y\right|^{n+\sigma}}&\geq \frac{1}{((n-1)m^2+1)^{\frac{n+\sigma}{2}}}\int_{R}^{+\infty}\int_{-m(z_1-\left|x\right|)}^{m(z_1-\left|x\right|)}\dots\int_{-m(z_1-\left|x\right|)}^{m(z_1-\left|x\right|)}\frac{dz_{n}\dots d z_2}{\left|z_1-\left|x\right|\right|^{n+\sigma}}\,dz_1 \\ 
&= \frac{(2m)^{n-1}}{((n-1)m^2+1)^{\frac{n+1+s}{2}}}\int_{R}^{+\infty}\frac{(z_1-\left|x\right|)^{n-1}}{\left|z_1-\left|x\right|\right|^{n+\sigma}}\,dz_1 \\ 
&=\frac{2^{n} m^n}{(n m^2+1)^{\frac{n+1+s}{2}}}\int_{R}^{+\infty}\frac{d z_1}{\left|z_1-\left|x\right|\right|^{1+\sigma}}\,dz_1 \\ 
&=\frac{(2m)^{n-1}}{\sigma((n-1)m^2+1)^{\frac{n+\sigma}{2}}}(R-\left|x\right|)^{-s}\\
&\geq \frac{c_{n,\sigma}}{\sigma}(R-\left|x\right|)^{-s}.
\end{split}
\end{equation}
Also, we notice that if $\sigma\in (0,M)$ for some $M\in(0,+\infty)$, then $c_{n,\sigma}\geq c_{n,M}$. 
\end{proof}

Making use of Proposition~\ref{AmGiDiMe}, we now state and prove a lower bound  for $L(B_1,\R^n\setminus B_{1+r})$ when $r\in (0,+\infty)$ is small enough. Then, Theorem~\ref{SymverofTh} will follow by scaling from Lemma~\ref{gligliredght}.
    
\begin{lem}\label{gligliredght}
Let $p\in (1,+\infty)$, $s\in (0,1)$ and $\delta:=\min\left\lbrace 2^{-n}, 2^{-n+2-p} \right\rbrace$. 

Then, for every $r\in \left(0,\delta\right)$,
\begin{equation*}
L(B_1,\R^n\setminus B_{r+1})\geq\begin{dcases}
\delta\quad &\mbox{if}\quad s\in \left(0,\frac{1}{p}\right)\\
\delta\ln\left(\frac{1}{r}\right)\quad &\mbox{if}\quad s=\frac{1}{p}\\
\delta r^{1-sp}\quad &\mbox{if}\quad s\in \left(\frac{1}{p},1\right)
\end{dcases}  
\end{equation*}
\end{lem}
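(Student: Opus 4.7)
The plan is to apply Proposition~\ref{AmGiDiMe} to reduce to a one-dimensional radial computation, and then analyze the resulting integral in each of the three regimes $sp<1$, $sp=1$, $sp>1$.

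First, for every $x\in B_1$, Proposition~\ref{AmGiDiMe} applied with $R=1+r$ and $\sigma=sp$ yields
$$\int_{\R^n\setminus B_{1+r}}\frac{dy}{|x-y|^{n+sp}}\ge\frac{c_{n,p}}{sp}\,(1+r-|x|)^{-sp},$$
where an $s$-independent constant $c_{n,p}$ is extracted by monotonicity, using $c_{n,sp}\ge c_{n,p}$ since $sp<p$. Integrating in spherical coordinates and substituting $u=1+r-|x|$ gives
$$L(B_1,\R^n\setminus B_{1+r})\ge\frac{c_{n,p}\,|\partial B_1|}{sp}\int_{r}^{1+r}(1+r-u)^{n-1}\,u^{-sp}\,du.$$

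Next I bound the one-dimensional integral from below by restricting to well-chosen subintervals. Since $r<\delta\le 2^{-n}\le 1/2$, we have $1-r\ge 1/2$, hence $(1+r-u)^{n-1}\ge 2^{-(n-1)}$ whenever $u\le 1/2$. In the regime $sp>1$, I would restrict to $u\in[r,2r]$ to obtain
$$\int_r^{2r}u^{-sp}\,du=\frac{r^{1-sp}\,(1-2^{1-sp})}{sp-1},$$
and the factor $(1-2^{1-sp})/(sp-1)$ extends continuously to $\log 2$ at $sp=1$, so it is bounded below by a constant depending only on $p$ on the compact interval $sp\in[1,p]$. For $sp=1$, taking $u\in[r,1/2]$ produces $\log(1/(2r))$, which is comparable to $\log(1/r)$ once $r$ is small enough (guaranteed by the choice of $\delta$). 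For $sp<1$, the integrability of $u^{-sp}$ near $0$ yields a lower bound uniform in $r$: for instance, integrating over $u$ in a fixed interval bounded away from $0$ is enough, and this together with the extra factor $1/sp>1$ produces a constant bound.

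The main obstacle is bookkeeping the multiplicative constants so that the final estimate is at least $\delta=\min\{2^{-n},2^{-n+2-p}\}$ uniformly in $s\in(0,1)$. The delicate case is $sp$ near $1$, where both the factor $1/sp$ from Proposition~\ref{AmGiDiMe} and the integration of $u^{-sp}$ must be balanced: the continuity of the ratios $(1-2^{1-sp})/(sp-1)$ and $(2^{1-sp}-1)/(1-sp)$ at $sp=1$ is what prevents any degeneration and gives the logarithmic rate exactly at $sp=1$. Once all constants are tracked, the specific form of $\delta$ absorbs them cleanly and produces the three bounds claimed in the lemma; the scaling argument leading to Theorem~\ref{SymverofTh} will then be immediate by dilation.
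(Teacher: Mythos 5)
Your argument is essentially the paper's own proof: apply Proposition~\ref{AmGiDiMe} with $R=1+r$ and $\sigma=sp$ (using the monotonicity $c_{n,sp}\geq c_{n,p}$), integrate in polar coordinates over $B_1$, and treat the three regimes $sp<1$, $sp=1$, $sp>1$ by restricting the resulting one-dimensional integral to a suitable subinterval. The only point to repair is the case $sp=1$: your subinterval $u\in\left[r,\frac12\right]$ yields $\ln\frac{1}{2r}$, which is \emph{not} comparable to $\ln\frac{1}{r}$ as $r\to\delta^-$ when $n=1$ (there $\delta=\frac12$, so $r$ need not be small and $\ln\frac{1}{2r}\to 0$ while $\ln\frac1r\to\ln 2$); the paper instead integrates over $u\in\left[r,r+\frac12\right]$ (equivalently $t\in\left[\frac12,1\right]$), obtaining $\ln\left(1+\frac{1}{2r}\right)\geq\frac12\ln\frac1r$ for every $r>0$, which closes this edge case.
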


\begin{proof}
By~\eqref{hloiacrca}, we have that, for every $x\in B_1$, 
\begin{equation*}
\int_{\R^n\setminus B_{r+1}}\frac{dy}{\left|x-y\right|^{n+sp}}\geq \frac{c_{n,p}}{s} (r+1-\left|x\right|)^{-sp}. 
\end{equation*}
Hence, integrating both sides of the above inequality in $B_1$ we obtain that 
\begin{equation}\label{fcmlahnftrgflopt55554}
\begin{split}
L(B_1,\R^n\setminus B_{1+r})&=\int_{B_1}\int_{\R^n\setminus B_{1+r}}\frac{dy\,dx}{\left|x-y\right|^{n+sp}}\\
&\geq \frac{c_{n,p}}{s} \int_{B_1} (r+1-\left|x\right|)^{-sp}\,dx\\ 
&=\left|\partial B_1\right|\frac{c_{n,p}}{s}\int_{0}^1\frac{t^{n-1}}{(r+1-t)^{sp}}\,dt.
\end{split}
\end{equation}
Now, we need to distinguish the following three cases:
\begin{equation}\label{asdf-sp-capom-sE1}
sp\in(0,1),\end{equation}
\begin{equation}\label{asdf-sp-capom-sE2}
sp=1\end{equation}
and
\begin{equation}\label{asdf-sp-capom-sE3}sp\in (1,p).
\end{equation}

When~\eqref{asdf-sp-capom-sE1} holds true, we have that 
\begin{equation*}
\begin{split}
\int_{0}^1t^{n-1}(r+1-t)^{-sp}\,dt&\geq \int_{\frac{1}{2}}^1 t^{n-1}(r+1-t)^{-sp}\,dt\\
&\geq \left(\frac{1}{2}\right)^{n-1}\int_{\frac{1}{2}}^1 (r+1-t)^{-sp}\,dt\\
&=\left(\frac{1}{2}\right)^{n-1}\frac{1}{sp-1}(r+1-t)^{-sp+1}\bigg|_{\frac{1}{2}}^1\\
&=\left(\frac{1}{2}\right)^{n-1}\frac{1}{1-sp}\left[\left(r+\frac{1}{2}\right)^{1-sp}-r^{1-sp}\right]\\
&\geq \left(\frac{1}{2}\right)^{n-1}\frac{1}{1-sp}\left[1-\left(\frac{1}{2}\right)^{1-sp}\right]\\
&=\left(\frac{1}{2}\right)^{n-1}\frac{1-2^{-(1-sp)}}{1-sp}\\
&\geq \left(\frac{1}{2}\right)^n. 
\end{split}
\end{equation*}

Instead, when~\eqref{asdf-sp-capom-sE2} holds true, we obtain that 
\begin{equation*}
\begin{split}
\int_{0}^1t^{n-1}(r+1-t)^{-1}\,dt&\geq \int_{\frac{1}{2}}^1 t^{n-1}(r+1-t)^{-1}\,dt\\
&\geq \left(\frac{1}{2}\right)^{n-1}\int_{\frac{1}{2}}^1 (r+1-t)^{-1}\,dt\\
&=\left(\frac{1}{2}\right)^{n-1}\left|\ln(r+1-t)\right|\bigg|_{\frac{1}{2}}^1\\
&=\left(\frac{1}{2}\right)^{n-1}\left[\left|\ln(r)\right|-\left|\ln\left(r+1/2\right)\right|\right]\\
&=\left(\frac{1}{2}\right)^{n}2\ln\left(1+\frac{1}{2r}\right)\\
&\geq \left(\frac{1}{2}\right)^{n}\ln\left(\frac{1}{r}\right).
\end{split}
\end{equation*}

Finally, in case~\eqref{asdf-sp-capom-sE3},
\begin{equation*}
\begin{split}
\int_{0}^1t^{n-1}(r+1-t)^{-sp}\,dt&\geq \int_{\frac{1}{2}}^1 t^{n-1}(r+1-t)^{-sp}\,dt\\
&\geq \left(\frac{1}{2}\right)^{n-1}\int_{\frac{1}{2}}^1 (r+1-t)^{-sp}\,dt\\
&=\left(\frac{1}{2}\right)^{n-1}\frac{1}{sp-1}(r+1-t)^{-sp+1}\bigg|_{\frac{1}{2}}^1\\
&=\left(\frac{1}{2}\right)^{n-1}\frac{1}{sp-1}\left[\frac{1}{r^{sp-1}}-\frac{1}{\left(r+1/2\right)^{sp-1}}\right]\\
&\geq \left(\frac{1}{2}\right)^{n-1}\frac{1}{sp-1}\left[\frac{1}{r^{sp-1}}-\frac{1}{\left(2r\right)^{sp-1}}\right]\\
&=\frac{1}{2^{sp+n-2}}\left(\frac{2^{sp-1}-1}{sp-1}\right)r^{1-sp}\\
&\geq \frac{1}{2^{p+n-2}} r^{1-sp}.
\end{split}
\end{equation*}
These three cases complete the proof of the desired result.
\end{proof}

With this preliminary work, we are in a position to complete the proof of Theorem~\ref{SymverofTh}.

\begin{proof}[Proof of Theorem~\ref{SymverofTh}]
To prove~\eqref{Sonounoskianto} it is enough to rescale $L(B_R,\R^n\setminus B_{R+r})$. By doing so, we can apply Lemma~\ref{gligliredght} as follows:
\begin{equation*}
\begin{split}
L(B_R,\R^n \setminus B_{R+r})&=R^{n-sp} L(B_1,\R^n\setminus B_{1+\frac{r}{R}})\\
&\geq \begin{dcases}
\delta R^{n-sp}\quad &\mbox{if}\quad s\in\left(0,\frac{1}{p}\right)\\
\delta R^{n-1}  \ln \left(\frac{R}{r}\right)\quad &\mbox{if}\quad s=\frac{1}{p}\\
\delta R^{n-sp} \left(\frac{r}{R}\right)^{1-sp}\quad &\mbox{if}\quad s\in\left(\frac{1}{p},1\right),
\end{dcases}
\end{split}
\end{equation*}yielding the desired result.
\end{proof}

\section{Some regularity results}

In what follows we consider $w$ as given in Theorem~3.1 in~\cite{DFVERPP} with $\tau:=\mu$, where $\mu$ has been defined in equation~\eqref{muuuuuuu}. Also, we let $\bar{R}_\mu$ be as in~\eqref{latuse-invefr43}. Moreover, given any $f\in C_{\textit{loc}}^\alpha(\Omega)$, $K\subset\subset\Omega$ such that $\bigcup_{x_0\in K}B_2(x_0)\subset\Omega$ we denote 
\begin{equation*}
\sup_{x_0\in K} \left[f\right]_{C^\alpha(B_1(x_0))}:=\left[f\right]_{K,\alpha}. 
\end{equation*} 
Then, with these choices we have the following result:   

\begin{prop}\label{hiolevcgfdb4t54}
Let $s_0\in (0,1)$, $s\in [s_0,1)$, $\alpha\in (0,1)$ and $u\in C_{\textit{loc}}^\alpha(\Omega)$. Moreover, we let $v$ be defined in~\eqref{vdefini-tion} and $R\in \left[\bar{R}_{\mu},+\infty\right)$ satisfying $B_{R+2}\subset \Omega$. Then, $u-v\in C_{c}^\alpha(\Omega)$ and there exists~$C:=C_{s_0,n,m,p,c_1}\in (0,+\infty)$ such that, for every $x_0\in \R^n$, 
\begin{equation}\label{tebegdfjuybgf}
\left[u-v\right]_{C^\alpha(B_1(x_0))}\leq  4 \left[u\right]_{K,\alpha}+2C.
\end{equation} 
\end{prop}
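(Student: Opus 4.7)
The plan rests on three ingredients: an algebraic identification of $u-v$, a uniform Hölder bound for the barrier $w$, and a covering argument to handle points $x_0$ near the boundary of the support of $u-v$.

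First, I would observe that $u - v = (u - w)^+$, which is immediate from $v = \min\{u, w\}$. Combined with \eqref{eq:boundcond}, which gives $v \equiv u$ in $\mathbb{R}^n \setminus B_R$, this shows that $u - v$ vanishes outside $\overline{B_R}$, so it has compact support inside $\Omega$ (recall $B_{R+2} \subset \Omega$). Hence the membership $u-v \in C_c^\alpha(\mathbb{R}^n)$ will follow once the Hölder seminorm bound is established.

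Second, I would establish a uniform bound of the form $[w]_{C^\alpha(B_1(x_0))} \leq C$, with $C = C_{s_0, n, m, p, c_1}$ independent of $x_0 \in \mathbb{R}^n$ and of $s \in [s_0, 1)$. The barrier $w$ of Theorem~3.1 in~\cite{DFVERPP} is constructed as a radial function whose profile and first derivative are controlled by $C_\mu$ and $\bar{R}_\mu$. By~\eqref{muuuuuuu} we have $\mu = c_1/((1-s)p)$, and by~\eqref{controdiconsta-1} the quantities $C_\mu$ and $\bar{R}_\mu$ are bounded uniformly for $s \in [s_0, 1)$, which produces a Lipschitz bound on $w$ depending only on $s_0, n, m, p, c_1$ and then, via $\alpha \in (0,1)$ and $\mathrm{diam}(B_1(x_0)) = 2$, a Hölder bound of the same type.

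Third, I would combine the two via the classical contraction inequality
$$\big| (u-w)^+(x) - (u-w)^+(y) \big| \leq |(u-w)(x) - (u-w)(y)| \leq |u(x)-u(y)| + |w(x)-w(y)|.$$
For $x_0 \in \mathbb{R}^n$ with $B_1(x_0) \cap B_R = \emptyset$, the function $u-v$ vanishes identically on $B_1(x_0)$, so $[u-v]_{C^\alpha(B_1(x_0))} = 0$. Otherwise $x_0$ lies within distance $1$ of $\overline{B_R}$, in particular $B_1(x_0) \subset B_{R+2} \subset \Omega$, and I can cover $B_1(x_0)$ by finitely many (in a dimension-independent way) balls $B_1(x_0')$ with $x_0' \in K$ in order to bound $[u]_{C^\alpha(B_1(x_0))}$ by an absolute multiple of $[u]_{K, \alpha}$; this comparison, together with the uniform estimate on $w$, yields \eqref{tebegdfjuybgf}.

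The main obstacle is the second step: one must inspect the explicit construction of the barrier in~\cite{DFVERPP} and check that the transition region where $w$ moves from near $-1$ to its exterior value carries a Lipschitz norm that does not blow up as $s \to 1$; this is precisely where the uniform bounds of~\eqref{controdiconsta-1} enter, and where the final dependence of $C$ on $s_0, n, m, p, c_1$ (through $\mu$) is produced.
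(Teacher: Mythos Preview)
Your plan is essentially the paper's proof. The paper proves the uniform Lipschitz bound $|\nabla w|\le C_{s_0,n,m,p,c_1}$ by unpacking the explicit barrier construction in~\cite{DFVERPP} (equations~(3.8), (3.39)--(3.41) and Proposition~A.6 there), then passes to $u-v$ via the inequality $|v(x)-v(y)|\le\max\{|u(x)-u(y)|,|w(x)-w(y)|\}$ and the triangle inequality; your use of $u-v=(u-w)^+$ together with the $1$-Lipschitz property of $t\mapsto t^+$ is an equivalent (and arguably cleaner) substitute for that step.

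One imprecision worth flagging: in your third step, a covering of $B_1(x_0)$ by unit balls centred in $K$ does \emph{not} control $[u]_{C^\alpha(B_1(x_0))}$, because pairs $x,y\in B_1(x_0)$ at distance close to $2$ can straddle different covering balls. The paper handles $x_0\notin\overline{B}_R$ instead by exploiting that $u-v$ vanishes outside $\overline{B}_R$: for $y\in B_1(x_0)\setminus B_R$ one replaces $y$ by its radial projection $y'=Ry/|y|\in\partial B_R$, noting $(u-v)(y)=(u-v)(y')=0$ and $|x-y'|\le|x-y|$, which reduces the estimate to a ball centred on $\partial B_R$. Your argument is easily repaired along the same lines (or by taking $x_0'=(x+y)/2\in\overline{B}_R$ once both endpoints lie in $\overline{B}_R$); the point is only that a literal covering is not the right device here.
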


\begin{proof}
First, we claim that there exists~$C_{s_0,n,m,p,c_1}\in (0,+\infty)$ such that, for a.e. $x\in\R^n$,
\begin{equation}\label{dcmbterfdbh}
\left|\nabla w(x)\right|\leq C_{s_0,n,m,p,c_1}. 
\end{equation}
To show~\eqref{dcmbterfdbh}, we recall that according to~(3.40) in~\cite{DFVERPP}, we have that 
\begin{equation}\label{zenolo}
w(x)=(2-\beta)h\left(\frac{\left|x\right|}{C_0}\right)+\beta-1,
\end{equation}
where the function $h$ and the constants $C_0$ and $\beta$ are given respectively in~(3.8) and~(3.39) in~\cite{DFVERPP}. Also, for every $R\in\left[\bar{R}_\mu,+\infty\right)$ according to~(3.39) and~(3.41) in~\cite{DFVERPP} it holds that
\begin{equation}\label{pinrepintre}
\beta\leq 2. 
\end{equation} 
Furthermore, according to~(3.39) in~\cite{DFVERPP} there exists a constant $c_{s_0,n,m,p,c_1}\in (0,+\infty)$  such that 
\begin{equation}\label{sfin}
c_{s_0,n,m,p,c_1}\leq C_0.
\end{equation}
Also, if $\hat{r},r\in (0,+\infty)$ are given respectively as in~(B.25) and~(2.10) in~\cite{DFVERPP}, then, for every $t\in \left(0,\frac{r}{2}\right]\cup \left(\hat{r},+\infty\right)$,
\begin{equation}\label{hlitrvb}
h'(t)=0. 
\end{equation}
Moreover, according to equations~(3.8),~(3.25),~(A.14) and Proposition~A.6 in~\cite{DFVERPP}, it holds that, for every $t \in  \left(\frac{r}{2},\hat{r}\right)$,
\begin{equation}\label{jijitref654321}
h'(t)\leq C_{p,m} (r-t)^{-qs-1}\leq C_{p,m} (r-\hat{r})^{-qs-1}\leq C_{p,m}\left(\widehat{C}_{p,m}\right)^{-qs-1}\leq \tilde{C}_{p,m}. 
\end{equation}
Therefore, putting together~\eqref{zenolo},~\eqref{pinrepintre},~\eqref{sfin},~\eqref{hlitrvb} and~\eqref{jijitref654321}, we deduce that, for a.e. $x\in\R^n$,
\begin{equation*}
\begin{split}
\left|\nabla w(x)\right|=\frac{2-\beta}{C_0}\left|h'\left(\frac{\left|x\right|}{C_0}\right)\right| \leq \frac{4}{c_{s_0,n,m,p,c_1}} \tilde{C}_{p,m}=:C_{s_0,n,m,p,c_1}. 
\end{split}
\end{equation*}  
This concludes the proof of claim~\eqref{dcmbterfdbh}. 

Furthermore, since $u\in C_{\textit{loc}}^\alpha(\Omega)$ and $B_{R+2}\subset\Omega$, we have that,
for every $x_0\in \overline{B}_R$,
\begin{equation}\label{horoitvcfe}
\left[u\right]_{C^\alpha(B_1(x_0))}\leq \left[u\right]_{C^\alpha\left(B_{R+\frac{3}{2}}\right)}.
\end{equation} 

Also, we recall that $v:=\min\left\lbrace u,w \right\rbrace$ and it is easy to verify that 
\begin{equation*}
\left|v(x)-v(y)\right|\leq \max\left\lbrace \left|u(x)-u(y)\right|, \left|w(x)-w(y)\right|\right\rbrace.
\end{equation*}
From this and~\eqref{dcmbterfdbh} it follows that, for every $x_0\in \overline{B}_R$,
\begin{equation}\label{hliotevgfb}
\left[v\right]_{C^{\alpha}(B_1(x_0))}\leq \max \left\lbrace  \left[u\right]_{C^\alpha\left(B_{R+\frac{3}{2}}\right)}, C_{s_0,n,m,p,c_1} \right\rbrace.
\end{equation}
Hence, recalling~\eqref{hliotevgfb}, we obtain that, for every $x_0\in \overline{B}_R$,
\begin{equation}\label{vnvcd-67123}
\left[u-v\right]_{C^\alpha(B_1(x_0))}\leq \left[u\right]_{C^\alpha(B_1(x_0))} +\left[v\right]_{C^\alpha(B_1(x_0))}\leq 2 \left[u\right]_{C^\alpha\left(B_{R+\frac{3}{2}}\right)}+C_{s_0,n,m,p,c_1}.
\end{equation}

Now, for every $x\in \R^n\setminus B_R$ we use the notation $x':=\frac{x}{\left|x\right|}R$. Also, in what follows we take $x_0\in\R^n\setminus \overline{B}_R$. Then, employing~\eqref{horoitvcfe},~\eqref{hliotevgfb},~\eqref{vnvcd-67123} and recalling that the support of~$u-v$ is contained in~$B_R$, we conclude that
\begin{equation*}
\begin{split}
\left[u-v\right]_{C^\alpha\left(B_1(x_0)\right)} :=&\sup_{x,y\in B_1(x_0)}\frac{\left|(u-v)(x)-(u-v)(y)\right|}{\left|x-y\right|^\alpha}\\
\leq & \sup_{x,y\in B_R\cap B_1(x_0)}\frac{\left|(u-v)(x)-(u-v)(y)\right|}{\left|x-y\right|^\alpha}+\sup_{\overset{x\in B_R\cap B_1(x_0)}{y\in B_1(x_0)\setminus B_R}}\frac{\left|(u-v)(x)-(u-v)(y)\right|}{\left|x-y\right|^\alpha}\\
\leq &\sup_{x,y\in B_R\cap B_1(x_0)}\frac{\left|u(x)-u(y)\right|}{\left|x-y\right|^\alpha}+\sup_{x,y\in B_R\cap B_1(x_0)}\frac{\left|v(x)-v(y)\right|}{\left|x-y\right|^\alpha}\\
&+ \sup_{\overset{x\in B_R\cap B_1(x_0)}{y\in B_1(x_0)\setminus B_R}}\frac{\left|(u-v)(x)-(u-v)(y)\right|}{\left|x-y\right|^\alpha}\\
\leq & \left[u\right]_{C^\alpha(B_1(x_0'))}+ \left[v\right]_{C^\alpha(B_1(x_0'))}+\sup_{\overset{x\in B_R\cap B_1(x_0)}{y\in B_1(x_0)\setminus B_R}}\frac{\left|(u-v)(x)-(u-v)(y')\right|}{\left|x-y'\right|^\alpha}\\
\leq &\left[u\right]_{C^\alpha(B_1(x_0'))}+\left[v\right]_{C^\alpha(B_1(x_0'))}+\left[u-v\right]_{C^\alpha(B_1(x_0'))}\\
\leq & 4\left[u\right]_{C^\alpha\left(B_{R+\frac{3}{2}}\right)}+ 2C_{s_0,n,m,p,c_1}.\qedhere
\end{split}
\end{equation*}

\end{proof}

\section{Technical results}
Here we collect some technical results that will be used in the proof of the $\Gamma$-convergence stated in Theorem~\ref{12c-dgbnbc5543}. 

First we point out a useful variation of a celebrated result in~\cite{MR3586796}. 
\begin{prop}\label{prop-cont}
Let $p\in (1,+\infty)$ and $\Omega\subset \R^n$ be bounded, open and smooth. Also, let $R\in (0,+\infty)$ be such that $\Omega\subset B_{\frac{R}{3}}$. 

Then, for every  $v\in W^{1,p}(\Omega)\cap L^\infty(\R^n\setminus B_R)\cap L^p(B_R)$,
\begin{equation}\label{zenonen}
\lim_{s\to 1}(1-s)\int_{\Omega}\int_{\R^n} \frac{\left|v(x)-v(y)\right|^p}{\left|x-y\right|^{n+sp}}\,dy\,dx =\frac{K_{n,p}}{p}\int_{\Omega}\left|\nabla v(x)\right|^p\,dx.
\end{equation}
\end{prop}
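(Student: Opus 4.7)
The strategy is the classical Bourgain--Brezis--Mironescu (BBM) scheme, adapted to accommodate the fact that in $\int_\Omega\int_{\R^n}$ the second variable ranges over all of $\R^n$. I would split
\begin{equation*}
\int_\Omega\int_{\R^n}\frac{|v(x)-v(y)|^p}{|x-y|^{n+sp}}\, dy\, dx \;=\; \int_\Omega\int_\Omega\frac{|v(x)-v(y)|^p}{|x-y|^{n+sp}}\, dy\, dx \;+\; \int_\Omega\int_{\R^n\setminus\Omega}\frac{|v(x)-v(y)|^p}{|x-y|^{n+sp}}\, dy\, dx,
\end{equation*}
and treat the two pieces separately. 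For the first piece I would apply the classical BBM theorem to $v\in W^{1,p}(\Omega)$ on the smooth bounded domain $\Omega$, which directly yields
\begin{equation*}
\lim_{s\to 1}(1-s)\int_\Omega\int_\Omega\frac{|v(x)-v(y)|^p}{|x-y|^{n+sp}}\, dy\, dx \;=\; \frac{K_{n,p}}{p}\int_\Omega|\nabla v(x)|^p\, dx,
\end{equation*}
since the BBM normalisation $\frac{1}{p}\int_{S^{n-1}}|e_1\cdot\omega|^p\, dH^{n-1}$ coincides with $K_{n,p}/p$ in the convention of the paper.

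For the second piece I would further split $\R^n\setminus\Omega = (B_R\setminus\Omega) \cup (\R^n\setminus B_R)$ and show that both subparts, once multiplied by $(1-s)$, vanish as $s\to 1$. The far-field part is elementary: since $\Omega\subset B_{R/3}$, for $x\in\Omega$ and $y\in\R^n\setminus B_R$ one has $|x-y|\geq 2R/3$; combining $v\in L^p(B_R)\cap L^\infty(\R^n\setminus B_R)$ with $|v(x)-v(y)|^p\leq 2^{p-1}(|v(x)|^p+|v(y)|^p)$ and $\int_{\R^n\setminus B_R}dy/|x-y|^{n+sp}\leq |S^{n-1}|(2R/3)^{-sp}/(sp)$ yields a bound of order $1/s$ uniform in $s$, killed by the $(1-s)$ prefactor.

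The remaining annular contribution over $\Omega\times(B_R\setminus\Omega)$ is the main technical obstacle, because $|x-y|$ may be arbitrarily small when $y$ approaches $\partial\Omega$. The plan is to use the smoothness of $\Omega$ to produce a $W^{1,p}$-extension $\tilde v\in W^{1,p}(\R^n)$ of $v|_\Omega$, observe that $v(x)=\tilde v(x)$ on $\Omega$, and split via
\begin{equation*}
|v(x)-v(y)|^p \;\leq\; 2^{p-1}\bigl(|\tilde v(x)-\tilde v(y)|^p+|\tilde v(y)-v(y)|^p\bigr).
\end{equation*}
The first summand is controlled by applying BBM to $\tilde v$ on a large ball containing $B_R$ and subtracting the $\Omega\times\Omega$ contribution already computed, so that its $(1-s)$-weight vanishes. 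The second summand is handled by Fubini, reducing the task to estimating
\begin{equation*}
(1-s)\int_{B_R\setminus\Omega}|\tilde v(y)-v(y)|^p\,\Bigl(\int_\Omega\frac{dx}{|x-y|^{n+sp}}\Bigr) dy,
\end{equation*}
where the inner integral blows up like $\mathrm{dist}(y,\partial\Omega)^{-sp}/(sp)$ near $\partial\Omega$. The hardest step will be absorbing this non-uniform singularity: my scheme is to exploit that $\tilde v$ and $v$ share the same trace along $\partial\Omega$ so that $\tilde v-v$ vanishes at $\partial\Omega$ in an appropriate sense, and combine this with a layer-cake decomposition in $\delta=\mathrm{dist}(y,\partial\Omega)$, using the smoothness of $\partial\Omega$ to bound the measure $|\{y: \delta(y)<r\}\cap(B_R\setminus\Omega)|$ linearly in $r$ for small $r$. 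Once this is done, collecting the pieces yields the claimed limit.
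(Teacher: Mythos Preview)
Your overall split (handle $\Omega\times\Omega$ via BBM, show the cross term vanishes) matches the paper's strategy, but the crucial near-boundary step has a genuine gap. You propose to control $(1-s)\int_{B_R\setminus\Omega}|\tilde v(y)-v(y)|^p\,\delta(y)^{-sp}\,dy$ by arguing that ``$\tilde v$ and $v$ share the same trace along $\partial\Omega$''. This is \emph{not} granted by the hypotheses: outside $\Omega$ the function $v$ is only assumed to lie in $L^p(B_R)$, with no regularity and no link to its inside values, so it need not have a trace on $\partial\Omega$ at all. In fact the statement as written is false---take $\Omega=B_1$, $R=4$, $v=\chi_{B_1}$: then $v\in W^{1,p}(\Omega)$ with $\nabla v\equiv 0$, yet for $sp>1$ one has $\int_{B_1}\int_{\R^n\setminus B_1}|x-y|^{-n-sp}\,dy\,dx=+\infty$, so the left-hand side of~\eqref{zenonen} is $+\infty\neq 0$. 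The paper's own proof shares this implicit extra assumption (it applies the fundamental theorem of calculus on segments crossing $\partial\Omega$, hence uses $\nabla v$ on $\Omega_\epsilon^+\setminus\Omega$); the proposition is invoked only for $v\in W^{1,p}(\R^n)$, so the intended hypothesis is evidently stronger than what is stated.

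Granting that stronger hypothesis, your route becomes legitimate and in one respect cleaner than the paper's: you may take $\tilde v=v$, so the second summand vanishes identically, and for the first summand note that
\[
2\int_\Omega\int_{B_{2R}\setminus\Omega}\;=\;\int_{B_{2R}}\int_{B_{2R}}-\int_\Omega\int_\Omega-\int_{B_{2R}\setminus\overline\Omega}\int_{B_{2R}\setminus\overline\Omega},
\]
and apply BBM to each of the three Lipschitz domains $B_{2R}$, $\Omega$, $B_{2R}\setminus\overline\Omega$; the three gradient limits cancel, forcing the cross term to $0$. (Your phrase ``subtracting the $\Omega\times\Omega$ contribution'' alone is not enough---you also need BBM on the complementary piece.) The paper instead introduces the $\epsilon$-layers $\Omega_\epsilon^\pm$, splits the cross term into four pieces $A$--$D$, and for the delicate piece $D$ uses the pointwise bound $|v(x)-v(y)|\le|x-y|\int_0^1|\nabla v(y+t(x-y))|\,dt$ together with absolute continuity of $\|\nabla v\|_{L^p}$ on thin shells.
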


\begin{proof}
For any $\epsilon\in (0,1)$ small enough we define the sets $\Omega_{\epsilon}^{-}$ and $\Omega_{\epsilon}^{+}$ as in~\eqref{setsepsilon}. Also, without loss of generality we assume that $0\in \Omega$. 

According to~Corollary~2 in~\cite{MR3586796}, we have that, for every $u\in W^{1,p}(\Omega)$,
\begin{equation*}
\lim_{s\to 1}\int_{\Omega}\int_{\Omega} \frac{\left|v(x)-v(y)\right|^p}{\left|x-y\right|^{n+sp}}\,dy\,dx= \frac{K_{n,p}}{p}\int_{\Omega}\left|\nabla v\right|^p\,dx.
\end{equation*}

As a consequence, in order to prove~\eqref{zenonen} it is only left to show that 
\begin{equation}\label{holig6trec}
\lim_{s\to 1} (1-s)\int_{\Omega}\int_{\R^n\setminus \Omega} \frac{\left|v(x)-v(y)\right|^p}{\left|x-y\right|^{n+sp}}\,dy\,dx=0.
\end{equation}
To show~\eqref{holig6trec}, we first observe that  
\begin{equation}\label{Cbvdkbgrvc5663}
\begin{split}
&\int_{\Omega}\int_{\R^n\setminus \Omega} \frac{\left|v(x)-v(y)\right|^p}{\left|x-y\right|^{n+sp}}\,dy\,dx\\
=& \int_{\Omega_{\epsilon}^{-}}\int_{\R^n\setminus\Omega} \frac{\left|v(x)-v(y)\right|^p}{\left|x-y\right|^{n+sp}}\,dy\,dx+ \int_{\Omega\setminus \Omega_{\epsilon}^{-}}\int_{\R^n\setminus\Omega} \frac{\left|v(x)-v(y)\right|^p}{\left|x-y\right|^{n+sp}}\,dy\,dx\\
=&\int_{\Omega_{\epsilon}^{-}}\int_{\R^n\setminus\Omega_{\epsilon}^{+}} \frac{\left|v(x)-v(y)\right|^p}{\left|x-y\right|^{n+sp}}\,dy\,dx+ \int_{\Omega_{\epsilon}^{-}}\int_{\Omega_{\epsilon}^{+}\setminus \Omega} \frac{\left|v(x)-v(y)\right|^p}{\left|x-y\right|^{n+sp}}\,dy\,dx\\
+& \int_{\Omega\setminus \Omega_{\epsilon}^{-}}\int_{\R^n\setminus\Omega_{\epsilon}^{+}} \frac{\left|v(x)-v(y)\right|^p}{\left|x-y\right|^{n+sp}}\,dy\,dx+\int_{\Omega\setminus \Omega_{\epsilon}^{-}}\int_{\Omega_{\epsilon}^{+}\setminus \Omega } \frac{\left|v(x)-v(y)\right|^p}{\left|x-y\right|^{n+sp}}\,dy\,dx.
\end{split}
\end{equation}  

Now we define 
\begin{equation*}
\begin{split}
A&:= \int_{\Omega_{\epsilon}^{-}}\int_{\R^n\setminus\Omega_{\epsilon}^{+}} \frac{\left|v(x)-v(y)\right|^p}{\left|x-y\right|^{n+sp}}\,dy\,dx,\quad B:=\int_{\Omega_{\epsilon}^{-}}\int_{\Omega_{\epsilon}^{+}\setminus \Omega} \frac{\left|v(x)-v(y)\right|^p}{\left|x-y\right|^{n+sp}}\,dy\,dx,\\
C&:=\int_{\Omega\setminus \Omega_{\epsilon}^{-}}\int_{\R^n\setminus\Omega_{\epsilon}^{+}} \frac{\left|v(x)-v(y)\right|^p}{\left|x-y\right|^{n+sp}}\,dy\,dx\quad\mbox{and}\quad D:=\int_{\Omega\setminus \Omega_{\epsilon}^{-}}\int_{\Omega_{\epsilon}^{+}\setminus \Omega } \frac{\left|v(x)-v(y)\right|^p}{\left|x-y\right|^{n+sp}}\,dy\,dx.
\end{split}
\end{equation*}
Then, if $R\in (0,+\infty)$ is such that $\Omega_{\epsilon}^{+}\subset B_{\frac{R}{2}}$, we see that, for every $x\in \Omega_{\epsilon}^{-}$ and $y\in \R^n\setminus B_{R}$,
\begin{equation*}
\left|x-y\right|^{n+sp}\geq \left(\left|y\right|-\left|x\right|\right)^{n+sp}\geq \left(\frac{\left|y\right|}{2}\right)^{n+sp}.
\end{equation*} 
Hence, we obtain the existence of a constant $C_{n,p}$ such that
\begin{equation}\label{yecmlap}
\begin{split}
A =&\int_{\R^n\setminus B_{R}}\int_{\Omega_{\epsilon}^{-}}\frac{\left|v(x)-v(y)\right|^p}{\left|x-y\right|^{n+sp}}dx\,dy+\int_{B_{R}\setminus \Omega_{\epsilon}^{+}}\int_{\Omega_{\epsilon}^{-}}\frac{\left|v(x)-v(y)\right|^p}{\left|x-y\right|^{n+sp}}dx\,dy\\
\leq & \int_{\R^n\setminus B_{R}}\int_{\Omega_{\epsilon}^{-}}\frac{\left|v(x)-v(y)\right|^p}{\left|y/2\right|^{n+sp}}dx\,dy+\int_{B_{R}\setminus \Omega_{\epsilon}^{+}}\int_{\Omega_{\epsilon}^{-}}\frac{\left|v(x)-v(y)\right|^p}{(2\epsilon)^{n+sp}}dx\,dy\\
\leq & 2^{p-1}\int_{\R^n\setminus B_{R}}\int_{\Omega_{\epsilon}^{-}}\frac{\left|v(x)\right|^p+\left|v(y)\right|^p}{\left|y/2\right|^{n+sp}}dx\,dy+2^{p-1}\int_{B_{R}\setminus \Omega_{\epsilon}^{+}}\int_{\Omega_{\epsilon}^{-}}\frac{\left|v(x)\right|^p+\left|v(y)\right|^p}{(2\epsilon)^{n+sp}}dx\,dy\\ 
\leq & 2^{p-1}\int_{\R^n\setminus B_{R}}\frac{\left\|v\right\|_{L^p(\Omega_{\epsilon}^-)}^p+\left|\Omega_{\epsilon}^-\right|\left\|v\right\|_{L^\infty(\R^n\setminus B_R)}^p}{\left|y/2\right|^{n+sp}}dx\,dy+2^{p-1}\int_{B_{R}\setminus \Omega_{\epsilon}^{+}}\frac{\left\|v\right\|_{L^p(\Omega_{\epsilon}^-)}^p+\left|\Omega_{\epsilon}^-\right|\left|v(y)\right|^p}{(2\epsilon)^{n+sp}}\,dy\\ 
\leq & C_{n,p}\left(\left\|v\right\|_{L^p(\Omega_{\epsilon}^-)}^p+\left|\Omega_{\epsilon}^-\right|\left\|v\right\|_{L^\infty(\R^n\setminus B_R)}^p\right)\frac{R^{-sp}}{sp}+ C_{n,p} \epsilon^{-n-sp} \left(\left|B_R\setminus \Omega_{\epsilon}^{+}\right|\left\|v\right\|_{L^p(\Omega_{\epsilon}^-)}^p+\left|\Omega_{\epsilon}^-\right|\left\|v\right\|_{L^p(B_R\setminus \Omega_{\epsilon}^+)}^p\right).
\end{split}
\end{equation} 

Similarly, 
\begin{equation}\label{oldfvert}
\begin{split}
B \leq C_{n,p}\epsilon^{-n-sp}\left(\left|\Omega_{\epsilon}^-\right|\left\|v\right\|_{L^p(\Omega_{\epsilon}^+\setminus \Omega)}+\left|\Omega_{\epsilon}^+\setminus \Omega\right|\left\|v\right\|_{L^p(\Omega_{\epsilon}^-)}                 \right).
\end{split}
\end{equation}
 
Also, we can estimate $C$ as 
\begin{equation}\label{kelorfce}
\begin{split}
C\leq & C_{n,p}\left(\left\|v\right\|_{L^p(\Omega\setminus \Omega_{\epsilon}^-)}^p+\left|\Omega_{\epsilon}^-\right|\left\|v\right\|_{L^\infty(\R^n\setminus B_R)}^p\right)\frac{R^{-sp}}{sp}\\
&+ C_{n,p} \epsilon^{-n-sp} \left(\left|B_R\setminus \Omega_{\epsilon}^{+}\right|\left\|v\right\|_{L^p(\Omega\setminus \Omega_{\epsilon}^-)}^p+\left|\Omega\setminus \Omega_{\epsilon}^-\right|\left\|v\right\|_{L^p(B_R\setminus \Omega_{\epsilon}^+)}^p\right).
\end{split}
\end{equation}

Finally, we estimate $D$. To do so, we observe that 
\begin{equation}\label{guglielmo}
\begin{split}
D&=\int_{\Omega\setminus \Omega_{\epsilon}^{-}}\int_{\Omega_{\epsilon}^{+}\setminus \Omega } \frac{\left|v(x)-v(y)\right|^p}{\left|x-y\right|^{n+sp}}\,dy\,dx\\
&=\int_{\Omega\setminus \Omega_{\epsilon}^{-}}\int_{B_1(x)\cap \left(\Omega_{\epsilon}^{+}\setminus \Omega\right)} \frac{\left|v(x)-v(y)\right|^p}{\left|x-y\right|^{n+sp}}\,dy\,dx+ \int_{\Omega\setminus \Omega_{\epsilon}^{-}}\int_{B_1^c(x)\cap \left(\Omega_{\epsilon}^{+}\setminus \Omega\right)} \frac{\left|v(x)-v(y)\right|^p}{\left|x-y\right|^{n+sp}}\,dy\,dx.
\end{split}
\end{equation} 
Moreover, for a.e. $x,y\in \R^n$ it holds that 
\begin{equation*}
\frac{\left|v(x)-v(y)\right|}{\left|x-y\right|} \leq \int_{0}^1\left|\nabla v(y+t(x-y))\right|\,dt 
\end{equation*}

Therefore,
\begin{equation}\label{yety}
\begin{split}
&\int_{\Omega\setminus \Omega_{\epsilon}^{-}}\int_{B_1(x)\cap \left(\Omega_{\epsilon}^{+}\setminus \Omega\right)} \frac{\left|v(x)-v(y)\right|^p}{\left|x-y\right|^{n+sp}}\,dy\,dx\\
=&\int_{\Omega\setminus \Omega_{\epsilon}^{-}}\int_{B_1(x)\cap \left(\Omega_{\epsilon}^{+}\setminus \Omega\right)} \frac{\left|v(x)-v(y)\right|^p}{\left|x-y\right|^{p}}\frac{1}{\left|x-y\right|^{n+sp-p}}\,dy\,dx\\
\leq & \int_{\Omega\setminus \Omega_{\epsilon}^{-}}\int_{B_1(x)\cap \left(\Omega_{\epsilon}^{+}\setminus \Omega\right)} \left( \int_{0}^1\left|\nabla v(y+t(x-y))\right|\,dt     \right)^p\frac{1}{\left|x-y\right|^{n+sp-p}}\,dy\,dx\\
\leq &\int_{\Omega\setminus \Omega_{\epsilon}^{-}}\int_{B_1(x)\cap \left(\Omega_{\epsilon}^{+}\setminus \Omega\right)} \int_{0}^1\frac{\left|\nabla v(y+t(x-y))\right|^p}{\left|x-y\right|^{n+sp-p}}\,dt \,dy\,dx\\
\leq & \int_{\R^n}\int_{B_1} \int_{0}^1\frac{\left|\nabla v(x+(1-t)z)\right|^p}{\left|z\right|^{n+sp-p}}\chi_{\Omega\setminus \Omega_{\epsilon}^{-}}(x)\,dt \,dz\,dx\\
=&\int_{B_1} \int_{0}^1\int_{\R^n}\frac{\left|\nabla v(x)\right|^p}{\left|z\right|^{n+sp-p}}\chi_{\Omega\setminus \Omega_{\epsilon}^{-}}(x-(1-t)z)\,dx\,dt \,dz\\
=&\int_{B_1} \int_{0}^1\int_{\R^n}\frac{\left|\nabla v(x)\right|^p}{\left|z\right|^{n+sp-p}}\chi_{\Omega\setminus \Omega_{\epsilon}^{-}+(1-t)z}(x)\,dx\,dt \,dz\\
=&\int_{B_1} \int_{0}^1\int_{\R^n}\frac{\left\|\nabla v\right\|_{L^p(\Omega\setminus \Omega_{\epsilon}^{-}+(1-t)z)}^p}{\left|z\right|^{n+sp-p}}\,dt \,dz.
\end{split}
\end{equation}
Now, the function $f:\overline{B}_1\to \R$ defined by
$$f(q):=\left\|\nabla v\right\|_{L^p(\Omega\setminus \Omega_{\epsilon}^{-}+q)}^p$$
is continuous and therefore there exists~$\tilde{q}_\epsilon\in \overline{B}_1$ such that, for every $t\in (0,1)$ and $z\in B_1$,
\begin{equation*}
\left\|\nabla v \right\|_{L^p(\Omega\setminus \Omega_{\epsilon}^{-}+(1-t)z)}^p\leq f(\tilde{q}_\epsilon).
\end{equation*}

Hence, it follows from~\eqref{yety} that 
\begin{equation}\label{cresto}
\begin{split}
&\int_{\Omega\setminus \Omega_{\epsilon}^{-}}\int_{B_1(x)\cap \left(\Omega_{\epsilon}^{+}\setminus \Omega\right)} \frac{\left|v(x)-v(y)\right|^p}{\left|x-y\right|^{n+sp}}\,dy\,dx\\
\leq & \int_{B_1} \frac{\left\|\nabla v\right\|_{L^p(\Omega\setminus \Omega_{\epsilon}^{-}+\tilde{q}_\epsilon)}^p}{\left|z\right|^{n+sp-p}}\,dz\\
=& \frac{\left|\partial B_1\right|}{p-sp}\left\|\nabla v\right\|_{L^p(\Omega\setminus \Omega_{\epsilon}^{-}+\tilde{q}_\epsilon)}^p
\end{split}
\end{equation}

We also observe that
\begin{equation*}
\begin{split}
&\int_{\Omega\setminus \Omega_{\epsilon}^{-}}\int_{B_1^c(x)\cap \left(\Omega_{\epsilon}^{+}\setminus \Omega\right)} \frac{\left|v(x)-v(y)\right|^p}{\left|x-y\right|^{n+sp}}\,dy\,dx\\
\leq &\int_{\Omega\setminus \Omega_{\epsilon}^{-}}\int_{B_1^c(x)\cap \left(\Omega_{\epsilon}^{+}\setminus \Omega\right)}\left|v(x)-v(y)\right|^p\,dy\,dx\\
\leq & C_{n,p} \left\|v\right\|_{L^p(\Omega_{\epsilon}^{+})}, 
\end{split}
\end{equation*}
for a suitable constant $C_{n,p}$.  

Consequently, taking into account~\eqref{Cbvdkbgrvc5663},~\eqref{yecmlap},~\eqref{oldfvert},~\eqref{kelorfce},~\eqref{guglielmo}
and~\eqref{cresto}, we infer that 
\begin{equation*}
\begin{split}
(1-s)&\int_{\Omega}\int_{\R^n\setminus \Omega} \frac{\left|v(x)-v(y)\right|^p}{\left|x-y\right|^{n+sp}}\,dy\,dx\\ 
\leq (1-s) & C_{n,p}\left(\left\|v\right\|_{L^p(\Omega_{\epsilon}^-)}^p+\left|\Omega_{\epsilon}^-\right|\left\|v\right\|_{L^\infty(\R^n\setminus B_R)}^p\right)\frac{R^{-sp}}{sp}\\
+(1-s)& C_{n,p} \epsilon^{-n-sp} \left(\left|B_R\setminus \Omega_{\epsilon}^{+}\right|\left\|v\right\|_{L^p(\Omega_{\epsilon}^-)}^p+\left|\Omega_{\epsilon}^-\right|\left\|v\right\|_{L^p(B_R\setminus \Omega_{\epsilon}^+)}^p\right)\\
+(1-s) &C_{n,p}\epsilon^{-n-sp}\left(\left|\Omega_{\epsilon}^-\right|\left\|v\right\|_{L^p(\Omega_{\epsilon}^+\setminus \Omega)}+\left|\Omega_{\epsilon}^+\setminus \Omega\right|\left\|v\right\|_{L^p(\Omega_{\epsilon}^-)}                 \right)\\
+(1-s)& C_{n,p}\left(\left\|v\right\|_{L^p(\Omega\setminus \Omega_{\epsilon}^-)}^p+\left|\Omega_{\epsilon}^-\right|\left\|v\right\|_{L^\infty(\R^n\setminus B_R)}^p\right)\frac{R^{-sp}}{sp}\\
+(1-s)& C_{n,p} \epsilon^{-n-sp} \left(\left|B_R\setminus \Omega_{\epsilon}^{+}\right|\left\|v\right\|_{L^p(\Omega\setminus \Omega_{\epsilon}^-)}^p+\left|\Omega\setminus \Omega_{\epsilon}^-\right|\left\|v\right\|_{L^p(B_R\setminus \Omega_{\epsilon}^+)}^p\right)\\
+(1-s) & \frac{\left|\partial B_1\right|}{p(1-s)}\left\|\nabla v\right\|_{L^p(\Omega\setminus \Omega_{\epsilon}^{-}+\tilde{q}_\epsilon)}^p\\
+ (1-s) & C_{n,p} \left\|v\right\|_{L^p(\Omega_{\epsilon}^{+})}.
\end{split}
\end{equation*}
In particular, we deduce that 
\begin{equation*}
\limsup_{s\to 1}(1-s)\int_{\Omega}\int_{\R^n\setminus \Omega} \frac{\left|v(x)-v(y)\right|^p}{\left|x-y\right|^{n+sp}}\,dy\,dx\leq  \frac{\left|\partial B_1\right|}{p}\left\|\nabla v\right\|_{L^p(\Omega\setminus \Omega_{\epsilon}^{-}+\tilde{q}_\epsilon)}^p.
\end{equation*}
Finally, we notice that the right-hand side is small with $\epsilon$, which concludes the proof of Proposition~\ref{prop-cont}.
\end{proof}

The following result has been proved for $L^1(\R^n)$ in~\cite{MR2765717} and for $L^2(\R^n)$ in~\cite{MR4544090}. We provide here a proof for~$L^p(\R^n)$ for every $p \in (1,+\infty)$. 

\begin{prop}\label{colieradgber}
Let $p\in (1,+\infty)$. Then, there exists~$C_{n,p}\in (0,1)$ such that, for every $v\in L^p(\R^n)$, every~$h\in \R^n$, every
open bounded set $\Omega' \subset\R^n$ and every~$\rho\in (0,\left|h\right|]$,
\begin{equation}\label{slogbfvdtre54}
\left\| \tau_h v-v   \right\|_{L^p(\Omega')}\leq C_{n,p}\frac{\left|h\right|^p}{\rho^{n+p}}\int_{B_\rho}\left\|\tau_h v-v\right\|_{L^p(\Omega_{\left|h\right|}^{'})}\,dy.
\end{equation}
\end{prop}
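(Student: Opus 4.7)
The plan is to mollify $v$, reduce the estimate for $\tau_h v-v$ to a gradient bound on the smoothed function, and then express that gradient in terms of the $L^p$-oscillations of $v$ itself. I would fix a standard mollifier $\eta\in C_c^\infty(B_1)$ with $\int\eta=1$, set $\eta_\rho(y):=\rho^{-n}\eta(y/\rho)$, define $v_\rho:=v*\eta_\rho$, and use the triangle inequality in $L^p(\Omega')$ to split
\[
\|\tau_h v-v\|_{L^p(\Omega')}\le \|v-v_\rho\|_{L^p(\Omega')}+\|\tau_h v_\rho-v_\rho\|_{L^p(\Omega')}+\|\tau_h(v-v_\rho)\|_{L^p(\Omega')}
\]
into two \emph{mollification error} pieces and one \emph{smooth translation} piece. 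Since $\tau_h(v-v_\rho)$ restricted to $\Omega'$ equals $v-v_\rho$ on $\Omega'+h$, and $\Omega'\cup(\Omega'+h)\subset\Omega'_{|h|}$, the remaining analysis only involves norms over $\Omega'$, $\Omega'_{|h|}$ and (briefly) $\Omega'_{|h|+\rho}$.

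For the mollification error, I would use the representation $v(x)-v_\rho(x)=\int_{B_\rho}\eta_\rho(y)[v(x)-v(x-y)]\,dy$ and Jensen's inequality (with $\eta_\rho\,dy$ as a probability measure), followed by Fubini, to obtain
\[
\|v-v_\rho\|_{L^p(\Omega'_{|h|})}^p\le \|\eta_\rho\|_{L^\infty}\int_{B_\rho}\|\tau_y v-v\|_{L^p(\Omega'_{|h|})}^p\,dy\le \frac{c_n}{\rho^n}\int_{B_\rho}\|\tau_y v-v\|_{L^p(\Omega'_{|h|})}^p\,dy.
\]
For the smooth piece, I would combine the fundamental theorem of calculus with Jensen's inequality to get $\|\tau_h v_\rho-v_\rho\|_{L^p(\Omega')}^p\le |h|^p\,\|\nabla v_\rho\|_{L^p(\Omega'_{|h|})}^p$, and then exploit $\int\nabla\eta_\rho=0$ to rewrite
\[
\nabla v_\rho(x)=\int_{B_\rho}\nabla\eta_\rho(y)\,[v(x-y)-v(x)]\,dy,
\]
so that $|\nabla\eta_\rho|\le c_n\rho^{-n-1}$ together with H\"older's inequality with exponents $p$ and $p/(p-1)$ yields
\[
|\nabla v_\rho(x)|^p\le \frac{c_{n,p}}{\rho^{n+p}}\int_{B_\rho}|v(x-y)-v(x)|^p\,dy.
\]
Integrating in $x\in\Omega'_{|h|}$ and applying Fubini gives $\|\nabla v_\rho\|_{L^p(\Omega'_{|h|})}^p\le c_{n,p}\rho^{-n-p}\int_{B_\rho}\|\tau_y v-v\|_{L^p(\Omega'_{|h|+\rho})}^p\,dy$.

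Collecting the three estimates and using $\rho\le|h|$ to absorb the lower-order factor $\rho^{-n}$ into $|h|^p\rho^{-n-p}$ produces the desired inequality, after the harmless enlargement $\Omega'_{|h|+\rho}\subset\Omega'_{2|h|}$ is absorbed into the constant (or, equivalently, into a slightly larger fattening of $\Omega'$). The main obstacle I foresee is the H\"older step in bounding $|\nabla v_\rho|$: it is the only place where $p$ enters the scaling in a nontrivial way, and one has to verify that the combination of $|B_\rho|^{1-1/p}=c_n\rho^{n(1-1/p)}$ with $\|\nabla\eta_\rho\|_{L^\infty}\lesssim\rho^{-n-1}$ conspires to give exactly the factor $\rho^{-(n+p)/p}$ needed to match the scaling of the mollified gradient. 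All the remaining steps are routine convolutional manipulations that extend the $L^1$ and $L^2$ arguments of~\cite{MR2765717} and~\cite{MR4544090}.
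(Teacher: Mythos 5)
Your argument is essentially the paper's proof: the splitting $v=v_\rho+(v-v_\rho)$ is exactly the paper's decomposition $v=U_\rho+V_\rho$, the error term is handled by the same Jensen bound, and the smooth term by the same fundamental-theorem-of-calculus step after moving the derivative onto the mollifier via $\int\nabla\eta_\rho=0$ (your H\"older step with exponents $p$ and $p/(p-1)$ is equivalent to the paper's Jensen estimate and does produce the correct factor $\rho^{-(n+p)}$). The only imprecision is the enlargement to $\Omega'_{|h|+\rho}$, which cannot literally be ``absorbed into the constant'' since the statement fixes the set $\Omega'_{|h|}$; it disappears if, instead of substituting $z=x-y$, you keep $\|\tau_{-y}v-v\|_{L^p(\Omega'_{|h|})}$ and use the invariance of $B_\rho$ under $y\mapsto-y$ when integrating in $y$.
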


\begin{proof}
Let $\phi\in C_c^1(B_1)$ be such that 
\begin{equation*}
\phi\geq 0 \quad\mbox{and}\quad \int_{B_1}\phi(x)\,dx=1.
\end{equation*}
For every $\rho\in (0,+\infty)$ we define the following two functions 
\begin{equation*}
U_\rho(x):=\frac{1}{\rho^n}\int_{B_\rho}v(x+y)\phi\left(\frac{y}{\rho}\right)\,dy\quad\mbox{and}\quad V_\rho(x):=\frac{1}{\rho^n}\int_{B_\rho} (v(x)-v(x+y))\phi\left(\frac{y}{\rho}\right)\,dy.
\end{equation*}
We notice that 
\begin{equation*}
v(x)=U_\rho(x)+V_\rho(x).
\end{equation*}

We also have that
\begin{equation}\label{klorfecg}
\begin{split}
\left|\tau_h v(x)-v(x)\right|^p&=\left|U_\rho(x+h)+V_\rho(x+h)-U_\rho(x)-V_\rho(x)\right|^p\\
&\leq C_p\left(\left|U_\rho(x+h)-U_\rho(x)\right|^p+\left|V_\rho(x+h)\right|^p+\left|V_\rho(x)\right|^p\right),
\end{split}
\end{equation}
for a suitable constant $C_p>0$.

By the  Jensen inequality, for every $x\in\R^n$,
\begin{equation}\label{nhbcefdrtlgy7}
\left|V_\rho(x)\right|^p\leq \frac{\left\|\phi\right\|_{L^\infty(B_1)}}{\rho^n}\int_{B_\rho}\left|v(x)-v(x+y)\right|^p\,dy.
\end{equation}
Moreover,
\begin{equation*}
\begin{split}
\nabla U_\rho(x)&=\frac{\nabla}{\rho^n}\int_{B_\rho}v(x+y)\phi\left(\frac{y}{\rho}\right)\,dy\\
&=\frac{\nabla}{\rho^n}\int_{B_\rho}v(z)\phi\left(\frac{z-x}{\rho}\right)\,dy\\
&=-\frac{1}{\rho^{n+1}}\int_{B_\rho}\left(v(z)-v(x)\right)\nabla \phi\left(\frac{z-x}{\rho}\right)\,dy\\
&=-\frac{1}{\rho^{n+1}}\int_{B_\rho}\left(v(x+y)-v(x)\right)\nabla \phi\left(\frac{y}{\rho}\right)\,dy.
\end{split}
\end{equation*}

Accordingly, using the fundamental Theorem of Calculus and the Jensen inequality we obtain that
\begin{equation*}
\begin{split}
\left|U_\rho(x+h)-U_\rho(x)\right|^p&=\left|\int_{0}^1 \nabla U_{\rho}(x+th)\cdot h\,dt\right|^p\\
&\leq \left|h\right|^p\int_{0}^1 \left|\nabla U_{\rho}(x+th)\right|^p\,dt\\
&\leq \omega_n^p\frac{\left|h\right|^p}{\rho^{n+p}}\left\|\nabla \phi\right\|_{L^\infty(B_1)}^p\int_{0}^1\int_{B_{\rho}}\left|\tau_y v(x+th)-v(x+th)\right|^p\,dy\,dt
\end{split}
\end{equation*}

On this account, using~\eqref{klorfecg} and~\eqref{nhbcefdrtlgy7}, we obtain that 
\begin{equation*}
\begin{split}
\left|\tau_hv(x)-v(x)\right|^p\leq C_p\bigg( & \omega_n^p\frac{\left|h\right|^p}{\rho^{n+p}}\left\|\nabla \phi\right\|_{L^\infty(B_1)}^p\int_{0}^1\int_{B_{\rho}}\left|\tau_y v(x+th)-v(x+th)\right|^p\,dy\,dt\\
&+\frac{\left\|\phi\right\|_{L^\infty(B_1)}}{\rho^n}\int_{B_\rho}\left|v(x)-v(x+y)\right|^p\,dy\\
&+\frac{\left\|\phi\right\|_{L^\infty(B_1)}}{\rho^n}\int_{B_\rho}\left|v(x+h)-v(x+h+y)\right|^p\,dy\bigg).
\end{split}
\end{equation*}
Integrating both sides in $\Omega'$ and taking $\rho\leq \left|h\right|$ we conclude.
\end{proof}

As a consequence of this result and Lemma~2.5 in~\cite{MR4544090} we obtain the following result. 
\begin{prop}\label{hndvcfwslop8}
There exists a constant $\bar{C}_{n,p}\in(0,+\infty)$ such that for every $v\in L^p(\R^n)$, every bounded open set $\Omega'\subset\R^n$, every $s\in(0,1)$ and every $h\in\R^n$, 
\begin{equation*}
\left\|\tau_h v-v\right\|_{L^p\left(\Omega'\right)}^p\leq \bar{C}_{n,p}\left|h\right|^{sp}(1-s)\int_{B_{\left|h\right|}} \frac{\left\|\tau_y v-v\right\|_{L^p\left(\Omega_{\left|h\right|}'\right)}^p}{\left|y\right|^{n+sp}}\,dy.
\end{equation*}  
\end{prop}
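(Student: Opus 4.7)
The plan is to bootstrap the pointwise inequality from Proposition~\ref{colieradgber} via a weighted integration in the scale parameter $\rho$. Reading the proof of Proposition~\ref{colieradgber} carefully, one actually obtains the $p$-th power form, namely for every $\rho\in(0,|h|]$,
\begin{equation}\label{pln:start}
\|\tau_h v - v\|_{L^p(\Omega')}^p \leq C_{n,p}\, \frac{|h|^p}{\rho^{n+p}} \int_{B_\rho} \|\tau_y v - v\|_{L^p(\Omega'_{|h|})}^p\,dy.
\end{equation}
Denoting $A:=\|\tau_h v - v\|_{L^p(\Omega')}^p$ and $f(y):=\|\tau_y v - v\|_{L^p(\Omega'_{|h|})}^p$, the target inequality is $A \leq \bar{C}_{n,p}\,|h|^{sp}(1-s)\int_{B_{|h|}} f(y)|y|^{-n-sp}\,dy$.

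I would then multiply both sides of~\eqref{pln:start} by the weight $\rho^{p(1-s)-1}$ and integrate over $\rho\in(0,|h|]$. The weight is chosen precisely so that, after exchanging the order of integration on the right via Fubini, the $\rho$-integral against the characteristic function of $B_\rho$ produces the factor $|y|^{-n-sp}$. Since $s\in(0,1)$ and $p>1$, the weight is integrable at the origin, and on the left one finds
\[
A \cdot \int_0^{|h|}\rho^{p(1-s)-1}\,d\rho \;=\; A\cdot \frac{|h|^{p(1-s)}}{p(1-s)}.
\]
For the right-hand side, Fubini and a trivial bound give
\[
\int_0^{|h|} \rho^{-sp-n-1} \!\int_{B_\rho} f(y)\,dy\,d\rho \;=\; \int_{B_{|h|}} f(y) \int_{|y|}^{|h|} \rho^{-sp-n-1}\,d\rho\,dy \;\leq\; \frac{1}{sp+n}\int_{B_{|h|}} \frac{f(y)}{|y|^{n+sp}}\,dy,
\]
where the last inequality discards the non-negative contribution from the $|h|$-endpoint.

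Combining these two steps and isolating $A$ yields
\[
A \leq \frac{C_{n,p}\, p\, (1-s)}{sp+n}\, |h|^{sp} \int_{B_{|h|}} \frac{f(y)}{|y|^{n+sp}}\,dy,
\]
which is exactly the desired inequality once one notes that $\frac{p}{sp+n}\leq \frac{p}{n}$ uniformly for $s\in(0,1)$, so the final constant $\bar{C}_{n,p}$ depends only on $n$ and $p$. There is no substantive obstacle: the only point requiring care is identifying the correct weight $\rho^{p(1-s)-1}$ so that both the factor $\frac{|h|^{p(1-s)}}{p(1-s)}$ on the left (whose reciprocal delivers the $(1-s)$ on the right and converts $|h|^p$ into $|h|^{sp}$) and the exponent $-n-sp$ on $|y|$ on the right emerge correctly. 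Once this weight is fixed, the remainder is Fubini and a monotone bound on the inner $\rho$-integral, and the argument uses nothing beyond $v\in L^p(\R^n)$.
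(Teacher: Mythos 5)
Your proposal is correct and follows essentially the same route as the paper: the same weight $\rho^{p(1-s)-1}=\rho^{p-sp-1}$, integration in $\rho$ over $(0,|h|]$, and a Fubini exchange with a monotone bound on the inner integral (the paper phrases this in polar coordinates via $g_v(t)$ and cites Lemma~2.5 of~\cite{MR4544090} for the exchange, but it is the same computation). You also correctly read off the $p$-th power, $\tau_y$ form of Proposition~\ref{colieradgber}, which is what its proof actually establishes and what the paper itself uses here.
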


\begin{proof}
For every fixed $v\in L^p(\R^n)$, we define the function $g_v:[0,\left|h\right|]\to \R$ as 
\begin{equation*}
g_v(t):=\int_{\partial B_t} \left\|\tau_y v-v\right\|_{L^p\left(\Omega_{\left|h\right|}'\right)}^p\,dH_{y}^{n-1}.
\end{equation*} 
In particular, writing formula~\eqref{slogbfvdtre54} in polar coordinates we obtain that 
\begin{equation}\label{hutrgbchyr}
\left\|\tau_h v-v\right\|_{L^p(\Omega')}^p\leq C_{n,p} \frac{\left|h\right|^p}{\rho^{n+p}} \int_{0}^\rho g_v(t)\,dt. 
\end{equation}

We multiply both sides of~\eqref{hutrgbchyr} by $\rho^{p-sp-1}$ and integrate in the interval $\left[0,\left|h\right|\right]$. In this way, we find that
\begin{equation*}
 \left\|\tau_h v-v\right\|_{L^p(\Omega')}^p\leq p(1-s)C_{n,p} \int_{0}^{\left|h\right|}\frac{\left|h\right|^{sp}}{\rho^{n+sp+1}}\int_{0}^\rho g_v(t)\,dt\,d\rho
\end{equation*} 

Then, applying Lemma~2.5 in~\cite{MR4544090}, we arrive at
\begin{equation*}
 \left\|\tau_h v-v\right\|_{L^p(\Omega')}^p\leq p(1-s)C_{n,p} \left|h\right|^{sp}\int_{0}^{\left|h\right|}\frac{g_v(t)}{t^{n+sp}}\,dt.
\end{equation*}
Recalling the definition of $g_v$, the desired result plainly follows. 
\end{proof}

In the proof of Theorem~\ref{12c-dgbnbc5543} we will also make use of the following binomial Theorem. 

\begin{prop}[Newton's generalized binomial Theorem]
Let $a>b>0$ and $p\in (1,+\infty)$. Then, we have that 
\begin{equation}\label{Bolo}
(a-b)^p =\sum_{k=0}^{+\infty}\begin{pmatrix}
p\\k
\end{pmatrix}a^{p-k}(-b)^k.
\end{equation}
\end{prop}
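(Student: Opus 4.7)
The plan is to reduce the statement to the standard binomial series expansion for $(1+x)^p$ on the interval $(-1,1)$ and then to specialize at a suitable point. First I would factor out $a^p$ from $(a-b)^p$, writing
\begin{equation*}
(a-b)^p = a^p\left(1 - \tfrac{b}{a}\right)^p,
\end{equation*}
and set $x := -b/a$. Since $0 < b < a$, we have $|x| < 1$, so it suffices to establish the identity
\begin{equation*}
(1+x)^p = \sum_{k=0}^{+\infty} \binom{p}{k} x^k \quad\text{for all } x\in(-1,1),
\end{equation*}
where $\binom{p}{k} := \tfrac{p(p-1)\cdots(p-k+1)}{k!}$ denotes the generalized binomial coefficient. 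Applying this with $x=-b/a$ and multiplying by $a^p$ then recovers formula~\eqref{Bolo}.

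To prove the binomial series identity I would use the ODE approach. Define
\begin{equation*}
S(x) := \sum_{k=0}^{+\infty}\binom{p}{k}x^k.
\end{equation*}
A direct ratio test, using $\binom{p}{k+1}/\binom{p}{k} = (p-k)/(k+1) \to -1$, shows that the radius of convergence is $1$, so $S$ is a well-defined $C^\infty$ function on $(-1,1)$. Termwise differentiation (justified inside the radius of convergence) and the Pascal-type identity $(k+1)\binom{p}{k+1} + k\binom{p}{k} = p\binom{p}{k}$ yield
\begin{equation*}
(1+x)S'(x) = p\,S(x), \qquad S(0) = 1.
\end{equation*}
The function $f(x):=(1+x)^p$ satisfies the same first-order linear ODE with the same initial condition on $(-1,1)$, and solutions are unique (the equation $y' = \tfrac{p}{1+x}\,y$ has separable variables, giving $\log y = p\log(1+x) + C$). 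Therefore $S(x) = (1+x)^p$ on $(-1,1)$.

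The main technical point is the justification of termwise differentiation and the passage from formal identity to genuine equality of analytic functions; both follow from the standard fact that a power series is analytic in the interior of its disk of convergence. No real obstacle arises here since the convergence for $|x|<1$ is absolute and uniform on compact subsets. The only subtle issue worth double-checking would be that the ratio test indeed gives radius $1$ and that $\binom{p}{k}$ need not vanish (unlike the integer case), so the series is genuinely infinite; but neither affects the validity of the argument.
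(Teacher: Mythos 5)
Your argument is correct: the reduction $(a-b)^p=a^p(1-b/a)^p$ with $x=-b/a\in(-1,0)$, the ratio test giving radius of convergence $1$, the identity $(k+1)\binom{p}{k+1}+k\binom{p}{k}=p\binom{p}{k}$, and the uniqueness for the ODE $(1+x)S'=pS$, $S(0)=1$, together give the claim. The paper offers no proof of this proposition, treating it as the classical binomial series, and your ODE derivation is the standard justification, so there is nothing to compare beyond noting that you have supplied the omitted details correctly.
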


\end{appendix}

\begin{bibdiv}
\begin{biblist}

\bib{allen1972ground}{article}{
   title = {Ground state structures in ordered binary alloys with second neighbor interactions},
journal = {Acta Metallurgica},
volume = {20},
number = {3},
pages = {423--433},
date = {1972},
doi = {https://doi.org/10.1016/0001-6160(72)90037-5},
url = {https://www.sciencedirect.com/science/article/pii/0001616072900375},
author = {Allen, S. M.},
author={Cahn, J. W.},
}

\bib{MR0618549}{article}{
   author={Alt, H. W.},
   author={Caffarelli, L. A.},
   title={Existence and regularity for a minimum problem with free boundary},
   journal={J. Reine Angew. Math.},
   volume={325},
   date={1981},
   pages={105--144},
   issn={0075-4102},
   review={\MR{0618549}},
}

\bib{MR0733897}{article}{
   author={Alt, Hans Wilhelm},
   author={Caffarelli, Luis A.},
   author={Friedman, Avner},
   title={Jets with two fluids. I. One free boundary},
   journal={Indiana Univ. Math. J.},
   volume={33},
   date={1984},
   number={2},
   pages={213--247},
   issn={0022-2518},
   review={\MR{0733897}},
   doi={10.1512/iumj.1984.33.33011},
}

\bib{MR2765717}{article}{
   author={Ambrosio, Luigi},
   author={De Philippis, Guido},
   author={Martinazzi, Luca},
   title={$\Gamma$-convergence of nonlocal perimeter functionals},
   journal={Manuscripta Math.},
   volume={134},
   date={2011},
   number={3-4},
   pages={377--403},
   issn={0025-2611},
   review={\MR{2765717}},
   doi={10.1007/s00229-010-0399-4},
}

\bib{MR1002633}{article}{
   author={Almgren, Frederick J., Jr.},
   author={Lieb, Elliott H.},
   title={Symmetric decreasing rearrangement is sometimes continuous},
   journal={J. Amer. Math. Soc.},
   volume={2},
   date={1989},
   number={4},
   pages={683--773},
   issn={0894-0347},
   review={\MR{1002633}},
   doi={10.2307/1990893},
}

\bib{alberti2017phase}{article}{
  author={Alberti, Simon},
  title={Phase separation in biology},
  journal={Current Biology},
  volume={27},
  number={20},
  pages={R1097--R1102},
  date={2017},
  publisher={Elsevier}
}

\bib{MR3586796}{article}{
   author={Bourgain, Jean},
   author={Brezis, Haim},
   author={Mironescu, Petru},
   title={Another look at Sobolev spaces},
   conference={
      title={Optimal control and partial differential equations},
   },
   book={
      publisher={IOS, Amsterdam},
   },
   isbn={1-58603-096-5},
   date={2001},
   pages={439--455},
   review={\MR{3586796}},
}

\bib{brangwynne2009germline}{article}{
   author={Brangwynne, Clifford P},
   author={Eckmann, Christian R},    
   author={Courson, David S}, 
   author={Rybarska, Agata},
   author={Hoege, Carsten},
   author={Gharakhani, J{\"o}bin},
   author={J{\"u}licher, Frank},
   author={Hyman, Anthony A},     
   title={Germline P granules are liquid droplets that localize by controlled dissolution/condensation}, 
   journal={Science},
   volume={324},
   number={5935},
   pages={1729--1732},
   date={2009},
   publisher={American Association for the Advancement of Science}
}

\bib{MR4225499}{article}{
   author={Brasco, Lorenzo},
   author={G\'omez-Castro, David},
   author={V\'azquez, Juan Luis},
   title={Characterisation of homogeneous fractional Sobolev spaces},
   journal={Calc. Var. Partial Differential Equations},
   volume={60},
   date={2021},
   number={2},
   pages={Paper No. 60, 40},
   issn={0944-2669},
   review={\MR{4225499}},
   doi={10.1007/s00526-021-01934-6},
}

\bib{MR3861716}{article}{
   author={Brasco, Lorenzo},
   author={Lindgren, Erik},
   author={Schikorra, Armin},
   title={Higher H\"{o}lder regularity for the fractional $p$-Laplacian in
   the superquadratic case},
   journal={Adv. Math.},
   volume={338},
   date={2018},
   pages={782--846},
   issn={0001-8708},
   review={\MR{3861716}},
   doi={10.1016/j.aim.2018.09.009},
}

\bib{brasco2024morrey}{article}{
  author={Brasco, Lorenzo},
  author={Prinari, Francesca}, 
  author={Sk, Firoj},
  title={On Morrey's inequality in Sobolev-Slobodecki{\u\i} spaces},
  journal={Journal of Functional Analysis},
  volume={287},
  number={9},
  pages={110598},
  year={2024},
  publisher={Elsevier}
}

\bib{bouchitte1990singular}{article}{
   author={Bouchitt\'e, Guy},
   title={Singular perturbations of variational problems arising from a
   two-phase transition model},
   journal={Appl. Math. Optim.},
   volume={21},
   date={1990},
   number={3},
   pages={289--314},
   issn={0095-4616},
   review={\MR{1036589}},
   doi={10.1007/BF01445167},
}

\bib{MR2759829}{book}{
   author={Brezis, Haim},
   title={Functional analysis, Sobolev spaces and partial differential
   equations},
   series={Universitext},
   publisher={Springer, New York},
   date={2011},
   pages={xiv+599},
   isbn={978-0-387-70913-0},
   review={\MR{2759829}},
}

\bib{MR1695019}{article}{
   author={Brock, Friedemann},
   author={Solynin, Alexander Yu.},
   title={An approach to symmetrization via polarization},
   journal={Trans. Amer. Math. Soc.},
   volume={352},
   date={2000},
   number={4},
   pages={1759--1796},
   issn={0002-9947},
   review={\MR{1695019}},
   doi={10.1090/S0002-9947-99-02558-1},
}

\bib{MR1310848}{article}{
   author={Caffarelli, Luis A.},
   author={C\'ordoba, Antonio},
   title={Uniform convergence of a singular perturbation problem},
   journal={Comm. Pure Appl. Math.},
   volume={48},
   date={1995},
   number={1},
   pages={1--12},
   issn={0010-3640},
   review={\MR{1310848}},
   doi={10.1002/cpa.3160480101},
}

\bib{MR4544090}{article}{
   author={Crismale, V.},
   author={De Luca, L.},
   author={Kubin, A.},
   author={Ninno, A.},
   author={Ponsiglione, M.},
   title={The variational approach to $s$-fractional heat flows and the
   limit cases $s\to0^+$ and $s\to1^-$},
   journal={J. Funct. Anal.},
   volume={284},
   date={2023},
   number={8},
   pages={Paper No. 109851, 38},
   issn={0022-1236},
   review={\MR{4544090}},
   doi={10.1016/j.jfa.2023.109851},
}

\bib{castellano2009statistical}{article}{
  author={Castellano, Claudio}, 
  author={Fortunato, Santo},
  author={Loreto, Vittorio}, 
  title={Statistical physics of social dynamics},
  journal={Reviews of modern physics},
  volume={81},
  number={2},
  pages={591--646},
  year={2009},
  publisher={APS}
}

\bib{cahn1958free}{article}{
   author={Cahn, J. W.},
   author={Hilliard, J. E.},
   title={Free energy of a nonuniform system. I. Interfacial free energy},
   journal={The Journal of chemical physics},
   volume={28},
   date={1958},
   number={2},
   pages={258--267},
   publisher={American Institute of Physics},
}

\bib{MR3630640}{article}{
   author={Cozzi, Matteo},
   title={Regularity results and Harnack inequalities for minimizers and
   solutions of nonlocal problems: a unified approach via fractional De
   Giorgi classes},
   journal={J. Funct. Anal.},
   volume={272},
   date={2017},
   number={11},
   pages={4762--4837},
   issn={0022-1236},
   review={\MR{3630640}},
   doi={10.1016/j.jfa.2017.02.016},
}

\bib{DFVERPP}{article}{
   author={Dipierro, S.},
   author={Farina, A.},
   author={Giacomin, G.},
   author={Valdinoci, E.},
   title={Density estimates for a nonlocal variational
model with a degenerate double-well potential
driven by the Gagliardo norm}, 
   journal={arXiv preprint arXiv:2502.13400},
   date={2025}
}

\bib{1}{article}{
   author={Dipierro, Serena},
   author={Farina, Alberto},
   author={Valdinoci, Enrico},
   title={Density estimates for degenerate double-well potentials},
   journal={SIAM J. Math. Anal.},
   volume={50},
   date={2018},
   number={6},
   pages={6333--6347},
   issn={0036-1410},
   review={\MR{3890785}},
   doi={10.1137/17M114933X},
}

\bib{MR2944369}{article}{
   author={Di Nezza, Eleonora},
   author={Palatucci, Giampiero},
   author={Valdinoci, Enrico},
   title={Hitchhiker's guide to the fractional Sobolev spaces},
   journal={Bull. Sci. Math.},
   volume={136},
   date={2012},
   number={5},
   pages={521--573},
   issn={0007-4497},
   review={\MR{2944369}},
   doi={10.1016/j.bulsci.2011.12.004},
}

\bib{Depas}{article}{
   author={De Pas, F.},
   author={Dipierro, S.},
   author={Piccinini, M.},
   author={Valdinoci, E.},
   title={Heteroclinic connections for fractional Allen-Cahn equations
   with degenerate potentials},
   journal={Preprint, available online at https://cvgmt.sns.it/paper/6925/},
}

\bib{MR4581189}{article}{
   author={Dipierro, Serena},
   author={Valdinoci, Enrico},
   title={Some perspectives on (non)local phase transitions and minimal
   surfaces},
   journal={Bull. Math. Sci.},
   volume={13},
   date={2023},
   number={1},
   pages={Paper No. 2330001, 77},
   issn={1664-3607},
   review={\MR{4581189}},
   doi={10.1142/S1664360723300013},
}

\bib{MR2413100}{article}{
   author={Farina, Alberto},
   author={Valdinoci, Enrico},
   title={Geometry of quasiminimal phase transitions},
   journal={Calc. Var. Partial Differential Equations},
   volume={33},
   date={2008},
   number={1},
   pages={1--35},
   issn={0944-2669},
   review={\MR{2413100}},
   doi={10.1007/s00526-007-0146-1},
}

\bib{MR0666107}{article}{
   author={Giaquinta, Mariano},
   author={Giusti, Enrico},
   title={On the regularity of the minima of variational integrals},
   journal={Acta Math.},
   volume={148},
   date={1982},
   pages={31--46},
   issn={0001-5962},
   review={\MR{0666107}},
   doi={10.1007/BF02392725},
}

\bib{giaquinta1984quasi}{article}{
  author={Giaquinta, Mariano},
  author={Giusti, Enrico},
  title={Quasi-minima},
  booktitle={Annales de l'Institut Henri Poincar{\'e} C, Analyse non lin{\'e}aire},
  volume={1},
  number={2},
  pages={79--107},
  year={1984},
  organization={Elsevier},
}

\bib{ginzburg1958theory}{article}{
     author={Ginzburg, V. L.},
   author={Pitaevski\u{\i}, L. P.},
   title={On the theory of superfluidity},
   journal={Soviet Physics JETP},
   volume={34(7)},
   date={1958},
   pages={858--861 (1240--1245 \v{Z}. Eksper. Teoret. Fiz.)},
   review={\MR{105929}},
}

\bib{gurtin1985theory}{article}{
  author={Gurtin, Morton E.},
   title={On a theory of phase transitions with interfacial energy},
   journal={Arch. Rational Mech. Anal.},
   volume={87},
   date={1985},
   number={3},
   pages={187--212},
   issn={0003-9527},
   review={\MR{768066}},
   doi={10.1007/BF00250724},
}

\bib{harding2020population}{article}{
   author={Harding, Nathan},
   author={Spinney, Richard E},
   author={Prokopenko, Mikhail},
   title={Population mobility induced phase separation in SIS epidemic and social dynamics},
  journal={Scientific reports},
  volume={10},
  number={1},
  pages={7646},
  date={2020},
  publisher={Nature Publishing Group UK London}
}

\bib{MR3470672}{article}{
   author={Iannizzotto, Antonio},
   author={Mosconi, Sunra},
   author={Squassina, Marco},
   title={A note on global regularity for the weak solutions of fractional
   $p$-Laplacian equations},
   journal={Atti Accad. Naz. Lincei Rend. Lincei Mat. Appl.},
   volume={27},
   date={2016},
   number={1},
   pages={15--24},
   issn={1120-6330},
   review={\MR{3470672}},
   doi={10.4171/RLM/719},
}

\bib{MR3593528}{article}{
   author={Iannizzotto, Antonio},
   author={Mosconi, Sunra},
   author={Squassina, Marco},
   title={Global H\"{o}lder regularity for the fractional $p$-Laplacian},
   journal={Rev. Mat. Iberoam.},
   volume={32},
   date={2016},
   number={4},
   pages={1353--1392},
   issn={0213-2230},
   review={\MR{3593528}},
   doi={10.4171/RMI/921},
}

\bib{MR1389056}{collection}{
   title={World Congress of Nonlinear Analysts '92. Vol. I--IV},
   booktitle={Proceedings of the First Congress held in Tampa, Florida,
   August 19--26, 1992},
   editor={Lakshmikantham, V.},
   note={Edited by V. Lakshmikantham},
   publisher={Walter de Gruyter \& Co., Berlin},
   date={1996},
   pages={Vol. I: xlvi+1016 pp.; Vol. II: pp. i--xii and 1017--2028; Vol.
   III: pp. i--xii and 2029--3042; Vol. IV: pp. i--x and 3043--3954},
   isbn={3-11-013215-X},
   review={\MR{1389056}},
   doi={10.1515/9783110883237},
}

\bib{cad}{article}{
   author={Lindqvist, Peter},
   title={On the equation ${\rm div}\,(|\nabla u|^{p-2}\nabla
   u)+\lambda|u|^{p-2}u=0$},
   journal={Proc. Amer. Math. Soc.},
   volume={109},
   date={1990},
   number={1},
   pages={157--164},
   issn={0002-9939},
   review={\MR{1007505}},
   doi={10.2307/2048375},
}

\bib{MR0244627}{book}{
   author={Ladyzhenskaya, Olga A.},
   author={Ural'tseva, Nina N.},
   title={Linear and quasilinear elliptic equations},
   note={Translated from the Russian by Scripta Technica, Inc;
   Translation editor: Leon Ehrenpreis},
   publisher={Academic Press, New York-London},
   date={1968},
   pages={xviii+495},
   review={\MR{0244627}},
}

\bib{mitrea2016phase}{article}{
  author={Mitrea, Diana M},
  author={Kriwacki, Richard W},
  title={Phase separation in biology; functional organization of a higher order},
  journal={Cell Communication and Signaling},
  volume={14},
  pages={1--20},
  date={2016},
  publisher={Springer}
}

\bib{Mor77}{article}{
   author={Modica, Luciano},
   author={Mortola, Stefano},
   title={Un esempio di $\Gamma$-convergenza},
   language={Italian, with English summary},
   journal={Boll. Un. Mat. Ital. B (5)},
   volume={14},
   date={1977},
   number={1},
   pages={285--299},
   review={\MR{0445362}},
}

\bib{MR0866718}{article}{
   author={Modica, Luciano},
   title={The gradient theory of phase transitions and the minimal interface
   criterion},
   journal={Arch. Rational Mech. Anal.},
   volume={98},
   date={1987},
   number={2},
   pages={123--142},
   issn={0003-9527},
   review={\MR{0866718}},
   doi={10.1007/BF00251230},
}

\bib{MR1940355}{article}{
   author={Maz\cprime ya, V.},
   author={Shaposhnikova, T.},
   title={On the Bourgain, Brezis, and Mironescu theorem concerning limiting
   embeddings of fractional Sobolev spaces},
   journal={J. Funct. Anal.},
   volume={195},
   date={2002},
   number={2},
   pages={230--238},
   issn={0022-1236},
   review={\MR{1940355}},
   doi={10.1006/jfan.2002.3955},
}

\bib{MR2033060}{article}{
   author={Ponce, Augusto C.},
   title={A new approach to Sobolev spaces and connections to
   $\Gamma$-convergence},
   journal={Calc. Var. Partial Differential Equations},
   volume={19},
   date={2004},
   number={3},
   pages={229--255},
   issn={0944-2669},
   review={\MR{2033060}},
   doi={10.1007/s00526-003-0195-z},
}

\bib{MR3081641}{article}{
   author={Palatucci, Giampiero},
   author={Savin, Ovidiu},
   author={Valdinoci, Enrico},
   title={Local and global minimizers for a variational energy involving a
   fractional norm},
   journal={Ann. Mat. Pura Appl. (4)},
   volume={192},
   date={2013},
   number={4},
   pages={673--718},
   issn={0373-3114},
   review={\MR{3081641}},
   doi={10.1007/s10231-011-0243-9},
}

\bib{MR2126143}{article}{
   author={Petrosyan, Arshak},
   author={Valdinoci, Enrico},
   title={Geometric properties of Bernoulli-type minimizers},
   journal={Interfaces Free Bound.},
   volume={7},
   date={2005},
   number={1},
   pages={55--77},
   issn={1463-9963},
   review={\MR{2126143}},
   doi={10.4171/IFB/113},
}

\bib{6}{article}{
   author={Petrosyan, Arshak},
   author={Valdinoci, Enrico},
   title={Density estimates for a degenerate/singular phase-transition
   model},
   journal={SIAM J. Math. Anal.},
   volume={36},
   date={2005},
   number={4},
   pages={1057--1079},
   issn={0036-1410},
   review={\MR{2139200}},
   doi={10.1137/S0036141003437678},
}

\bib{MR0523642}{article}{
   author={Rowlinson, J. S.},
   title={Translation of J. D. van der Waals' ``The thermodynamic theory of
   capillarity under the hypothesis of a continuous variation of density''},
   journal={J. Statist. Phys.},
   volume={20},
   date={1979},
   number={2},
   pages={197--244},
   issn={0022-4715},
   review={\MR{0523642}},
   doi={10.1007/BF01011513},
}

\bib{MR2873236}{article}{
   author={Savin, Ovidiu},
   author={Valdinoci, Enrico},
   title={Density estimates for a nonlocal variational model via the Sobolev
   inequality},
   journal={SIAM J. Math. Anal.},
   volume={43},
   date={2011},
   number={6},
   pages={2675--2687},
   issn={0036-1410},
   review={\MR{2873236}},
   doi={10.1137/110831040},
}

\bib{MR2948285}{article}{
   author={Savin, Ovidiu},
   author={Valdinoci, Enrico},
   title={$\Gamma$-convergence for nonlocal phase transitions},
   journal={Ann. Inst. H. Poincar\'e{} C Anal. Non Lin\'eaire},
   volume={29},
   date={2012},
   number={4},
   pages={479--500},
   issn={0294-1449},
   review={\MR{2948285}},
   doi={10.1016/j.anihpc.2012.01.006},
}

\bib{MR3133422}{article}{
   author={Savin, Ovidiu},
   author={Valdinoci, Enrico},
   title={Density estimates for a variational model driven by the Gagliardo
   norm},
   language={English, with English and French summaries},
   journal={J. Math. Pures Appl. (9)},
   volume={101},
   date={2014},
   number={1},
   pages={1--26},
   issn={0021-7824},
   review={\MR{3133422}},
   doi={10.1016/j.matpur.2013.05.001},
}

\bib{savin2025density}{article}{
  author={Savin, Ovidiu},
  author={Zhang, Chilin},
  title={Density estimates for Ginzburg-Landau energies with degenerate double-well potentials},
  journal={arXiv preprint arXiv:2506.17000},
  year={2025},
}

\bib{ter2013collected}{book}{
   author={Ter Haar, D.},
   title={Collected papers of LD Landau},
   date={2013},
   publisher={Elsevier},
   
}

\bib{valdinoci2001plane}{book}{
   author={Valdinoci, Enrico},
   title={Plane-like minimizers in periodic media: Jet flows and
   Ginzburg-Landau},
   note={Thesis (Ph.D.)--The University of Texas at Austin},
   publisher={ProQuest LLC, Ann Arbor, MI},
   date={2001},
   pages={89},
   isbn={978-0493-83720-8},
   review={\MR{2703829}},
}

\bib{MR2228294}{article}{
   author={Valdinoci, Enrico},
   author={Sciunzi, Berardino},
   author={Savin, Vasile Ovidiu},
   title={Flat level set regularity of $p$-Laplace phase transitions},
   journal={Mem. Amer. Math. Soc.},
   volume={182},
   date={2006},
   number={858},
   pages={vi+144},
   issn={0065-9266},
   review={\MR{2228294}},
   doi={10.1090/memo/0858},
}

\bib{weber2019physics}{article}{
   author={Weber, Christoph A},
   author={Zwicker, D.},
   author={J{\"u}licher, F.},
   author={Lee, Chiu F.},
   title={Physics of active emulsions},
   journal={Reports on Progress in Physics},
   volume={82},
   number={6},
   pages={064601},
   date={2019},
   publisher={IOP Publishing},
}

\end{biblist}
\end{bibdiv}

\end{document}